\newcommand{\thetitle}{Estimating time-changes in noisy L\'evy
  models}%
\newcommand{\forenames}{Adam D.}
\newcommand{\surname}{Bull}
\newcommand{\fullname}{\forenames\ \surname}
\newcommand{\acks}{We thank EPSRC for their support under grant EP/K000993/1}
\newcommand{\mscone}{91G70}
\newcommand{\msctwo}{62G08}
\newcommand{\mscthree}{62G20}
\newcommand{\mscfour}{62G35}
\newcommand{\themsc}{\mscone\ (primary); \msctwo, \mscthree,
  \mscfour\ (secondary)}
\newcommand{\kwdone}{It\=o semimartingale}
\newcommand{\kwdtwo}{L\'evy process}
\newcommand{\kwdthree}{microstructure noise}
\newcommand{\kwdfour}{volatility}
\newcommand{\kwdfive}{time-change}
\newcommand{\thekeywords}{\kwdone, \kwdtwo, \kwdthree, \kwdfour,
  \kwdfive}
\newcommand{\addressone}{Statistical Laboratory}%
\newcommand{\addresstwo}{University of Cambridge}%
\let\oldmarginpar\marginpar
\renewcommand\marginpar[1]{\-\oldmarginpar[\raggedleft\footnotesize 
#1]{\raggedright\footnotesize #1}}
\DeclarePairedDelimiter{\abs}{\lvert}{\rvert}
\DeclarePairedDelimiter{\norm}{\lVert}{\rVert}
\newcommand{\N}{\mathbb{N}}
\newcommand{\Z}{\mathbb{Z}}
\newcommand{\C}{\mathbb{C}}
\newcommand{\R}{\mathbb{R}}
\renewcommand{\P}{\mathbb{P}}
\newcommand{\E}{\mathbb{E}}
\newcommand{\Var}{\mathbb{V}\mathrm{ar}}
\newtheorem{theorem}{Theorem}
\newtheorem{lemma}{Lemma}
\newtheorem{corollary}{Corollary}
\newtheorem{definition}{Definition}
\newcommand{\kl}{K_l}
\newcommand{\oh}{\tfrac12}
\newcommand{\clu}{c_{l/n_2}(u)} 
\newcommand{\hclu}{\widehat c_l(u)}
\newcommand{\hcmu}{\widehat c_m(u)}
\newcommand{\hcolu}{\widehat c_{1,l}(u)}
\newcommand{\hcolus}{\widehat c_{1,l}^2(u)}
\newcommand{\hctlu}{\widehat c_{2,l}(u)}
\newcommand{\hctlus}{\widehat c_{2,l}^2(u)}
\newcommand{\hcklu}{\widehat c_{k,l}(u)}
\newcommand{\vlu}{\varphi_{l/n_2}(u)}
\newcommand{\vku}{\varphi_{k/n_0}(u)}
\newcommand{\vlub}{\overline \varphi_l(u)}
\newcommand{\hvlu}{\widehat \varphi_l(u)}
\newcommand{\plu}{\psi_{l/n_2}(u)}
\newcommand{\pku}{\psi_{k/n_0}(u)}
\newcommand{\pksu}{\psi_{k/n_0}^2(u)}
\newcommand{\hplu}{\widehat \psi_l(u)}
\newcommand{\ssk}{\sigma^2_{k/n_0}}
\newcommand{\hssk}{\widehat \sigma^2_k}
\newcommand{\tssk}{\widetilde \sigma^2_k}
\newcommand{\hus}{\oh u^2}
\newcommand{\hxk}{\widehat X_k}
\newcommand{\hxkm}{\widehat X_{k,m}}
\newcommand{\ej}{\varepsilon_j}
\newcommand{\ejs}{\varepsilon_{j+1}}
\newcommand{\dxj}{\Delta X_j}
\newcommand{\dxm}{\Delta X_m}
\newcommand{\aj}{p_j}
\newcommand{\am}{p_m}
\newcommand{\bj}{q_j}
\newcommand{\bm}{q_m}
\newcommand{\ft}{\mathcal F_t}
\newcommand{\ftp}{\mathcal F_t^+}
\newcommand{\fz}{\mathcal F_0}
\newcommand{\fj}{\mathcal F_{j/n}}
\newcommand{\fjs}{\mathcal F_{(j+1)/n}}
\newcommand{\fm}{\mathcal F_{m/n}}
\newcommand{\fmp}{\mathcal F_{m/n}^+}
\newcommand{\fms}{\mathcal F_{(m+1)/n}}
\newcommand{\fjp}{\mathcal F_{j/n}^+}
\newcommand{\fk}{\mathcal F_{k/n_0}}
\newcommand{\fl}{\mathcal F_{l/n_2}}
\newcommand{\fls}{\mathcal F_{(l+1)/n_2}}
\newcommand{\tjik}{_{j \in J_k}}
\newcommand{\ejp}{\varepsilon_{j+1}}
\newcommand{\nj}{Z_{\varepsilon,j}}
\newcommand{\xij}{Z_{I,j}}
\newcommand{\el}{E_l}
\newcommand{\elc}{E_l^c}
\newcommand{\iam}{\mathcal I^\alpha(D,S)}
\newcommand{\iom}{\mathcal I^\alpha(D,1)}
\newcommand{\ihm}{\mathcal I^{1/2}(D,S)}
\newcommand{\wt}{\wedge S}
\newcommand{\emm}{\varepsilon_m}
\newcommand{\tjmik}{_{j,j+1\in J_k}}
\newcommand{\vtu}{\varphi_t(u)}
\newcommand{\ptu}{\psi_t(u)}
\newcommand{\ttu}{\rho^2_t(u)}
\newcommand{\tku}{\rho^2_{k/n_0}(u)}
\newcommand{\tlu}{\rho^2_{l/n_2}(u)}
\newcommand{\utu}{\tau^2_t(u)}
\newcommand{\hulu}{\widehat \tau^2_l(u)}
\newcommand{\ulu}{\tau^2_{l/n_2}(u)}
\newcommand{\ctu}{c_t(u)}
\newcommand{\tctu}{\widetilde c_t(u)}
\newcommand{\tcotu}{\widetilde c_{1,t}(u)}
\newcommand{\tcotus}{\widetilde c_{1,t}^2(u)}
\newcommand{\tcttu}{\widetilde c_{2,t}(u)}
\newcommand{\tcttus}{\widetilde c_{2,t}^2(u)}
\newcommand{\tchtu}{\widetilde c_{3,t}(u)}
\newcommand{\tchtus}{\widetilde c_{3,t}^2(u)}
\newcommand{\tcktu}{\widetilde c_{k,t}(u)}
\newcommand{\trlu}{\widetilde r_{l/n_2}(u)}
\newcommand{\hrlu}{\widehat r_{l}(u)}
\newcommand{\tcu}{\widetilde c(u)}
\newcommand{\trtu}{\widetilde r_t(u)}
\newcommand{\tctsu}{\widetilde c_t^S(u)}
\newcommand{\tru}{\widetilde r(u)}
\newcommand{\sa}{\mathcal S^{\alpha,\beta}}
\newcommand{\sh}{\mathcal S^{1/2,\beta}}
\newcommand{\sat}{\mathcal S^{\alpha,2}}
\newcommand{\ms}{\mathcal S}
\newcommand{\mt}{\mathcal T}
\newcommand{\sacm}{\sa(C,D)}
\newcommand{\satcm}{\sat(C,D)}
\newcommand{\sazcm}{\sa_{0}(C,D)}
\newcommand{\saecm}{\sa_{\gamma}(C,D)}
\newcommand{\ta}{\mathcal T^\alpha}
\newcommand{\tacm}{\ta(C,D)}
\newcommand{\toh}{\mathcal T^{1/2}}
\newcommand{\te}{\Omega_0}
\newcommand{\ten}{\Omega_{0,n}}
\newcommand{\ct}{c_t}
\renewcommand{\Re}{\mathrm{Re}}
\newcommand{\applink}{\autoref{sec:pro}}
\newcommand{\notlink}{}
\newcommand{\mainheader}{}
\newcommand{\suppheader}{}
\title{\thetitle\footnotetext{\emph{Acknowledgements:} \acks.}\footnotetext{\emph{MSC 2010:} \themsc.}
\footnotetext{\emph{Keywords:} \thekeywords.}}
\author{\fullname\\\footnotesize \addressone\\\footnotesize \addresstwo}
\date{}
\begin{document}

\maketitle

\begin{abstract}
In quantitative finance, we often model asset prices as a noisy It\=o
semimartingale. As this model is not identifiable, approximating by a
time-changed L\'evy process can be useful for generative modelling. We
give a new estimate of the normalised volatility or time change in
this model, which obtains minimax convergence rates, and is unaffected
by infinite-variation jumps. In the semimartingale model, our estimate
remains accurate for the normalised volatility, obtaining convergence
rates as good as any previously implied in the literature.

\end{abstract}

\section{Introduction}

In quantitative finance, we often wish to predict the distribution of
future asset prices using historical data; this problem is of interest
when pricing options or evaluating investment strategies. From
economic considerations, we know that log-prices must be given by a
noisy semimartingale; however, this model cannot in general be
identified from price data.

We will therefore consider modelling log-prices as a noisy
time-changed L\'evy process. We note that this model is general enough
to describe the salient features of price data -- stochastic
volatility, jumps and noise -- while still being simple enough to
identify its parameters from data. It thus serves as a useful
approximation to the semimartingale model for generative modelling.

Our goal will be to estimate the normalised volatility or time-change
process in this model. Previous estimates have failed to achieve
minimax convergence rates when the jumps are of infinite variation, as
is suggested by empirical evidence.  We will therefore describe a new
estimate, which obtains minimax rates, and is unaffected by arbitrary
jump activity.

We will further show that in the semimartingale model, our estimate
remains accurate for the normalised volatility, obtaining convergence
rates as good as any previously implied in the literature. Our
estimate thus achieves the best of both worlds: good convergence when
the time-changed approximation is accurate, and no penalty when it is
not.


We begin by describing the statistical models we will consider. We
will suppose we have a single asset whose efficient log-price is given
by an {\em It\=o semimartingale},
\begin{equation}
\label{eq:sm}
X_t = X_0 + \int_0^t b_s \,ds + \int_0^t \sqrt{c_s}\, dB_s +
\int_{0}^t \int_\R x\,(\mu(dx,ds)
-1_{\abs{x}< 1}\,\nu_s(dx)\,ds),
\end{equation}
where \(b_t \in \R\) is a drift process, \(c_t > 0\) a volatility
process, \(\nu_t\) a jump measure process, \(\mu(dx,dt)\) a Poisson
random measure with intensity \(\nu_t(dx)\,dt,\) and the above
decomposition holds with respect to a filtration \(\ft.\) (We refer to
\citealp{jacod_limit_2003}, for definitions.)

We note that the assumption \eqref{eq:sm} is extremely common in
quantitative finance, and is motivated by economic no-arbitrage
arguments, as in \citet{delbaen_general_1994}. The model \eqref{eq:sm}
reproduces common features of price data, such as {\em stochastic
volatility} -- given by the dependence of the characteristics \((b_t,
c_t, \nu_t)\) on time -- and {\em jumps} -- given by the presence of the
jump measure process \(\nu_t.\)

To fit this model to price data, however, it is widely accepted that
we must also account for a third feature, known as {\em microstructure
  noise}. The quoted price of assets in general can diverge from the
efficient market price, due to economic artefacts such as the bid-ask
spread, tick sizes, transaction costs and communication delays.
Indeed, empirical studies confirm that high-frequency price data is
too volatile to be explained solely by an efficient price process
\citep{andersen_great_2000,mykland_comment:_2005,hansen_realized_2006}.

A popular model for microstructure noise is to assume that the
log-prices are observed under zero-mean errors. We thus consider
observations
\begin{equation}
\label{eq:ms}
Y_j = X_{Tj/n} + \ej, \quad j = 0, \dots, n-1,
\end{equation}
over a time interval \([0, T],\) and with errors \(\ej\) satisfying
\(\E[\ej \mid \mathcal F_{Tj/n}] = 0.\) (We refer to
\citealp{jacod_microstructure_2009}, for a discussion of this model.)

Unfortunately, the observations \(Y_j\) are insufficient to identify
the parameters of the model \eqref{eq:sm}. Even given noiseless
observations, letting the time horizon \(T \to \infty,\) and the step
size \(T/n \to 0,\) we cannot in general identify the drift process
\(b_t,\) or jump measure process \(\nu_t.\)


In the following, we will therefore also consider a {\em time-changed
  L\'evy process} model. Here, we instead suppose the log-price
\begin{equation}
\label{eq:tc}
X_t = L_{R_t},
\end{equation}
for a L\'evy process
\[
L_t = L_0 + b t + \sqrt{c} B_t + \int_{0}^t \int_\R x\,(\mu(dx,ds)
-1_{\abs{x}< 1}\,\nu(dx)\,ds),\] with drift \(b \in \R,\) volatility
\(c > 0,\) jump measure \(\nu,\) and Poisson random measure
\(\mu(dx,dt)\) with intensity \(\nu(dx)\,dt,\) and a time-change
process
\[R_t = \int_0^t r_s\,ds,\] given by a rate process \(r_t > 0.\)

The model \eqref{eq:tc} was popularised by
\citet{carr_time-changed_2004}, and its applications also discussed by
\citet{cont_financial_2004}.  Intuitively, this model describes prices
which move faster or slower according to an activity rate \(r_t;\)
this rate can be thought of as the cumulative effect of factors such
as trading activity and volume, investor liquidity, and general
economic uncertainty.

Formally, the time-changed model \eqref{eq:tc} is the subset of the
semimartingale model \eqref{eq:sm} which satisfies the separability
condition
\begin{equation}
\label{eq:tcsm}
b_t = b r_t,\qquad c_t = c r_t,\qquad\nu_t = \nu r_t.
\end{equation}
This condition requires, for example, that the jump measure \(\nu_t\)
be governed by the rate process \(r_t,\) and contain no idiosyncratic
jump component.

Since these parameters are defined only up to a multiplicative
constant, we must also choose a normalisation for \(r_t.\) In the
following, for simplicity we will set \(r_t\) to integrate to one
(although we will also discuss alternative
normalisations). Equivalently, using \eqref{eq:tcsm} we may define
\begin{equation}
\label{eq:def-r}
r_t = \frac{c_t}{\int_0^tc_s\,ds};
\end{equation}
we note that this definition is then also meaningful for the
semimartingale model~\eqref{eq:sm}.

The separability condition~\eqref{eq:tcsm} can be thought of as
similar to the additivity condition in an additive model. We take a
fully nonparametric model, which is difficult to fit, and restrict it
to a lower-dimensional one, which is less so. As our smaller
model~\eqref{eq:tc} reproduces the salient features of price data --
stochastic volatility, jumps and noise -- it can potentially offer a
good approximation to the full model~\eqref{eq:sm}.

This approximation can be useful in a variety of settings. If we wish
to predict the distribution of future asset prices, for example to
price options or evaluate investment strategies, we must fit a
generative model to the data. We already know we cannot fit the full
model~\eqref{eq:sm}, as we cannot identify its parameters \(b_t\) and
\(\nu_t.\) As we will see below, the parameters of the
model~\eqref{eq:tc} can all be identified from price data; it may thus
be used either directly as a generative model, or as a starting point
to identify suitable parametric alternatives
\citep{carr_time-changed_2004,cont_financial_2004}.

To fit the model~\eqref{eq:tc} to data, we must estimate the
parameters \(b,\,c,\,\nu,\) and \(r_t.\) If the time horizon \(T \to
\infty,\) and the step size \(T/n \to 0,\) the drift \(b\) and
volatility \(c\) can be estimated using standard
techniques. Estimation of the L\'evy measure \(\nu,\) while more
involved, has also been considered by several authors
\citep{figueroa-lopez_nonparametric_2009,figueroa-lopez_central_2011,belomestny_statistical_2011,belomestny_estimation_2012},
and extensions of \citeauthor{figueroa-lopez_nonparametric_2009}'s
approach to include noise are possible as in
\citet{vetter_inference_2013}.


In the following, we will focus specifically on estimation of the rate
process \(r_t.\) We first note that some of the factors contributing
to this process, in particular trading activity and volume, can be
observed directly.  While such side information may be useful in
practice, we can expect that not all such factors are observable, and
the relationship between observable factors and efficient prices may
be unclear, especially after accounting for microstructure noise.

In the following, we will therefore restrict ourselves to estimating
\(r_t\) directly from price data.  While previous work has provided
such estimates in a variety of settings
\citep{winkel_recovery_2001,woerner_inference_2007,rosenbaum_asymptotic_2010,figueroa-lopez_statistical_2012},
these authors have not considered our setting~\eqref{eq:ms}
and~\eqref{eq:tc}.  Even accounting for microstructure noise, we
cannot apply their methods here to obtain minimax rates of
convergence.

An alternative route to estimating \(r_t\) is to first use the
identification~\eqref{eq:tcsm}, and then estimate the volatility
\(c_t\) in the semimartingale model~\eqref{eq:sm}. Many authors have
described approaches for this problem, under various assumptions on
the jump measure process \(\nu_t.\)

If there are no jumps present, the integrated
volatility \(\int_0^1 c_s\,ds\) can be recovered using multiscale
estimators \citep{zhang_tale_2005,zhang_efficient_2006}, realised
kernels \citep{barndorff-nielsen_designing_2008}, or pre-averaging
\citep{jacod_microstructure_2009,podolskij_estimation_2009}.  The spot
volatility \(c_t\) can likewise be recovered using kernel estimators
\citep{kristensen_nonparametric_2010,mancini_spot_2012}, Fourier
series \citep{munk_nonparametric_2010,reis_asymptotic_2011}, or
wavelets \citep{hoffmann_adaptive_2010}.

In each case, these methods can achieve minimax convergence rates,
equivalent for fixed \(T\) to observing \(c_t\) under Gaussian white
noise of size \(n^{-1/4}.\) In fact, it can be shown this link is a
formal statistical equivalence \citep{reis_asymptotic_2011}.

When jumps are present, however, we must account for them before
estimating \(c_t.\) Methods for doing so include jump thresholding
\citep{mancini_disentangling_2001,mancini_non_2006,fan_multi-scale_2007,jing_estimating_2011},
bipower variation
\citep{barndorff-nielsen_power_2004,podolskij_bipower-type_2009,podolskij_estimation_2009,hautsch_pre-averaging_2012},
and characteristic functions
\citep{todorov_realized_2012,jacod_remark_2012,jacod_efficient_2014}.

Unfortunately, if the jumps are of infinite variation, in general
these methods can no longer achieve the same convergence rates.  Even
given noiseless observations of the efficient prices, it is known that
the minimax convergence rate for \(c_t\) suffers, unless we assume the
infinite-variation part is a scaled \(\beta\)-stable process
\citep{jacod_remark_2012,jacod_efficient_2014}.

Nonetheless, empirical evidence suggests that price data does indeed
contain infinite-variation jumps
\citep{ait-sahalia_estimating_2009,jing_estimating_2011}. In the
following, we will therefore construct a novel estimate of the rate
process \(r_t.\) We will show that our estimate achieves good rates of
convergence in both models, and in the time-changed model is
unaffected by arbitrary jump activity.


Our estimate will be constructed in three stages. We will first obtain
pre-averaged estimates of price increments, and estimates of the
microstructure noise, as in \citet{jacod_microstructure_2009} or
\citet{podolskij_estimation_2009}.

We will then construct local estimates of the spot volatility, derived
by estimating the characteristic function of the price process. While
our approach will be similar to ones considered by previous authors
\citep{todorov_realized_2012,jacod_remark_2012,jacod_efficient_2014},
the precise construction necessary to obtain minimax rates will be
new.

Finally, we will smooth our local estimates of the volatility, using
standard tools from nonparametric regression. While many such
approaches are possible, we will use local polynomials, as described
for example by \citet{tsybakov_introduction_2009}. We will also
discuss how the various parameters required can be chosen
automatically from the data.

We will then prove results on the convergence rates of our
estimates. We note that our results will apply in two settings: a
standard nonparametric setting, where the characteristics of \(X_t\)
are assumed fixed and smooth; and a setting more natural in
quantitative finance, where these characteristics are themselves
described by It\=o semimartingales with locally bounded
characteristics.

For simplicity, our results will focus on the high-frequency case,
where the fixed time horizon \(T = 1.\) We note, however, that similar
results can also be proved when \(T \to \infty,\) provided that the
step size \(T / n \to 0.\)

In the time-changed L\'evy model \eqref{eq:tc}, we will then show that
our procedure estimates \(r_t\) with minimax convergence rates, equal
to those in the Gaussian white noise model with noise level
\(n^{-1/4}.\) Our results will hold under arbitrary jump activity, and
without knowledge of the L\'evy parameters \(b,\) \(c\) and \(\nu.\)

In the general semimartingale model \eqref{eq:sm}, we will show that
our procedure continues to estimate \(r_t.\) 
While lower bounds for this problem are still unknown, the convergence
rates we will obtain are as good as any implied by previous work. Our
estimate thus achieves the best of both worlds: good convergence when
the time-changed approximation is accurate, and no penalty when it is
not.

In \autoref{sec:est}, we will give the construction of our estimates,
and in \autoref{sec:mod}, describe the specific assumptions we
consider. In \autoref{sec:con}, we will then state our results on
convergence rates, and in \autoref{sec:proofs}, give proofs.

\section{Local characteristic-function estimates}
\label{sec:est}

In this section, we will define our estimates of the volatility and
rate processes.  As described in the introduction, our estimates are
constructed in three stages: pre-averaging, spot volatility
estimation, and smoothing.

We begin with the pre-averaging step, and proceed using the
construction of \citet{reis_asymptotic_2011}. We must first subdivide
the time interval \([0,1]\) into a number \(n_0\) of equal bins. To
define \(n_0,\) we choose \(n_1, n_2 \in \N\) in terms of \(n,\) so
that
\[\phantomsection \label{def:n} n_m \sim h_m^{-1} n^{(2m-1)/8},\qquad m=1,2,\]
for bandwidths \(h_1, h_2 > 0,\) and set \(n_0 = n_1n_2.\)

We then divide \([0,1]\) into \(n_0\) bins, and compute on each
one a pre-averaged estimate \(\hxk\) of the increments of \(X_t.\) We
will compute \(\hxk\) by integrating the observed increments against a
scaling function \(\Phi_n(t);\) we define
\[\phantomsection \label{def:phi} \Phi_n(t) = \sqrt{n_0}\Phi(n_0t), \qquad \Phi(t) = 2 \sin(2\pi t).\]

The specific choice of scaling function \(\Phi_n\) is motivated by
\citet{reis_asymptotic_2011}, who shows that in a Gaussian setting,
functions of this form are most efficient at extracting information
from noisy data.  We note that our choice of \(\Phi\) includes a full
period of the sine wave in each bin, rather than a half period; this
choice allows us to ensure that the pre-averaged increments are
approximately symmetrically distributed, a property we will require
when modelling the behaviour of infinite-variation jumps.

We may now define the pre-averaged increments \(\hxk.\) For \(k = 0,
\dots, n_0-1,\) define index sets \(J_k = (n/n_0)[k, k+1) \cap \Z,\)
and let
\[\phantomsection \label{def:hx} \hxk = \sum_{j,j+1 \in J_k} \aj (Y_{j+1} - Y_j), \qquad \aj =
\Phi_n(j/n).\] The estimate \(\hxk\) thus averages the observed
increments of \(X_t\) over the time interval \([k,k+1)/n_0.\) We can
also define an estimate \(\hssk\) of the microstructure noise over the
interval. We set
\[\phantomsection \label{def:hs} \hssk = \frac{n_0}{2n}\sum_{j,j+1 \in J_k}
(Y_{j+1}-Y_{j})^2,\] proportional to the realised quadratic variation
of the observations.

We next describe our spot volatility estimation step. We will
subdivide \([0,1]\) into \(n_2\) larger bins, and on each one,
construct an estimate \(\hclu\) of the volatility \(c_t.\) While our
approach will be based on local characteristic function estimates,
similar to those considered by previous authors
\citep{todorov_realized_2012,jacod_remark_2012,jacod_efficient_2014},
the precise construction necessary to obtain minimax rates will be
new.

For \(l = 0, \dots, n_2-1,\) we define index sets \(\kl =
n_1[l,l+1) \cap \Z,\) and local estimates \(\hvlu\) of the
characteristic function of increments of \(X_t,\) given by
\[\phantomsection \label{def:hv} \hvlu = \frac1{n_1}\sum_{k \in K_l}
\cos(u\hxk).\] We note that \(\hvlu\) thus averages the 
cosines of the \(\hxk\) over the time interval \([l,l+1)/n_2.\) We
can also define an estimate \(\hplu\) of the corresponding
contribution of the microstructure noise; we set
\[\phantomsection \label{def:hp} \hplu = \frac1{n_1}\sum_{k \in K_l} \exp(-\kappa u^2 \hssk),
\qquad \kappa = \frac{4\pi^2 n_0^2}{n}.\]

If the log-prices \(X_t\) and noises \(\ej\) were Gaussian, then by
considering their characteristic functions, we would expect
\[\hvlu \approx \exp(-(c_{l/n_2}+\kappa \sigma_{l/n_2}^2)u^2), \qquad
\hplu \approx \exp(-\kappa\sigma_{l/n_2}^2u^2).\] 
We could then rearrange these quantities to obtain an estimate
\[-\frac1{u^2}\log\,\abs*{\frac{\hvlu}{\hplu}}\] of the volatility
\(c_{l/n_2}.\)

In fact, such an estimate would be biased. We can, however, provide
bias-corrected estimates \(\hclu\) of \(c_{l/n_2};\) we define
\[\phantomsection \label{def:hc} \hclu = -\frac1{u^2} \left( \log \,\abs*{\frac{\hvlu}{\hplu}} +
  \frac\hulu2\right),\] where the bias-correction term
\[\phantomsection \label{def:ht} \hulu = \frac1{n_1} \left(\frac{1 + \widehat
    \varphi_l(2u)}{2\hvlu^2} - 1\right).\]

We have thus defined an estimate \(\hclu\) of the spot volatility.
The advantage of this procedure over other such estimates is that it
naturally accounts for the presence of jump activity: we will show
that, for general semimartingales, \(\hclu\) is an
asymptotically-unbiased estimate of the quantity \(\clu,\) given by
the adjusted volatility process
\[\phantomsection \label{def:cu} \ctu = c_t + \frac{1}{n_0u^2}\int_0^1 \int_\R
(1-\cos(\sqrt{n_0}\Phi(w)ux))\, \nu_t(dx)\,dw.\]

The process \(\ctu\) thus includes both the volatility \(c_t,\) and a
term depending on the jump measure \(\nu_t.\) As \(n\to \infty,\) the
term involving \(\nu_t\) vanishes; however, when the jump activity
\(\beta\) is large, this term will not vanish fast enough to be
negligible, and so we must consider it explicitly.  Crucially in the
time-changed model~\eqref{eq:tc}, we have that both terms enter
\(\ctu\) linearly, and so \(\ctu\) is proportional to the rate process
\(r_t.\)

In either model, since \(r_t\) integrates to one, we may estimate it
by normalising our estimates of \(\ctu.\) However, to estimate \(r_t\)
optimally we will not be able to use the preliminary estimates
\(\hclu\) directly, as their variance is too large. First, we must
smooth them, using standard tools from nonparametric regression.

While many such approaches are possible, in the following we will use
a local polynomial estimate of \(\ctu,\) as described for example by
\citet{tsybakov_introduction_2009}.  To define our estimate, fix a
non-negative kernel function \(K : \R \to \R,\) supported on
\([-1,1],\) and satisfying \(\int_\R K(t) \,dt = 1.\) Also fix an
order \(N \in \N,\) and bandwidth \(h > 0.\) Then let \(\tctu\) denote
a local polynomial estimate of \(\ctu\) of degree \(N-1,\) using the
observations \(\hclu,\) kernel \(K,\) and bandwidth \(h.\)

In other words, let
\[\phantomsection \label{def:lp} \tctu = \sum_{l=0}^{n_2-1} W_{n,l}(t)\hclu,\]
where the weight functions \(W_{n,l}(t)\) are given by
\[\phantomsection \label{def:lpw} W_{n,l}(t) = \frac1{n_2h} K(\lambda_{n,l}(t)) U(0)^T V_{n}(t)^{-1} U(\lambda_{n,l}(t)),\]
for the terms
\begin{align*}
\lambda_{n,l}(t) &= \frac1h\left(t-\frac{l}{n_2}\right),\\
U(\lambda) &= \left(1, \lambda, \dots, \frac{\lambda^{N-1}}{(N-1)!}\right)^T,\\
V_{n}(t) &= \frac{1}{n_2h} \sum_{l=0}^{n_2-1} K(\lambda_{n,l}(t))
U(\lambda_{n,l}(t)) U(\lambda_{n,l}(t))^T.
\end{align*}

The constant \(N \in \N\) serves as an upper bound for the smoothness
we expect of the volatility process \(c_t,\) and other processes
related to \(X_t.\) We include here the case where \(N\) is large, so
that our estimate can match known nonparametric lower bounds for a
wide range of smoothness.

In practice, however, we may believe that these processes are It\=o
semimartingales, as in most common financial models. We will see later
that in this case it suffices to take \(N=1;\) the above estimate
then reduces to the Nadaraya-Watson kernel estimate, with weights
\(W_{n,l}(t)\) given by
\[W_{n,l}(t) =
\frac{K(\lambda_{n,l}(t))}{\sum_{l=0}^{n_2-1} K(\lambda_{n,l}(t))}.\]

In either case, we then have an estimate \(\tctu\) of the volatility
\(c_t.\) To estimate the normalised volatility or rate process
\(r_t,\) we likewise define the normalised estimate
\[\phantomsection \label{def:hr} \trtu = \frac{\tctu}{\frac1{n_2}\sum_{m=0}^{n_2-1}\hcmu} =
\sum_{l=0}^{n_2-1}W_{n,l}(t)\hrlu,\] where \[\hrlu =
\frac{\hclu}{\frac1{n_2}\sum_{m=0}^{n_2-1}\hcmu}.\]

In the following sections, we will prove results on the theoretical
performance of our estimates \(\tctu\) and \(\trtu.\) We first,
however, briefly discuss their implementation. In particular, we note
that the above estimates require the choice of a number of parameters:
the kernel \(K,\) order \(N,\) frequency \(u,\) and bandwidths
\(h_1,\,h_2\) and \(h.\)

In general, good performance in nonparametric regression can be
obtained with a range of kernels \(K;\) popular choices include the
uniform, Epanechnikov and biweight kernels, given by
\(\mathrm{Beta}(k,k)\) densities for \(k = 1, 2\) and 3 respectively. If
we believe the volatility \(c_t,\) and other characteristic processes,
are given by It\=o semimartingales, then as noted above, we may also
take \(N=1.\)

The remaining parameters \(u,\,h_1,\,h_2,\) and \(h\) are more
important. We will show in the following that the variances of our
estimates \(\hclu\) depend crucially on the choice of the frequency
\(u,\) and bandwidths \(h_1,h_2.\) The correct choice of the bandwidth
\(h\) is likewise known to be crucial generally in nonparametric
regression.

To select these parameters, we can borrow methods from nonparametric
statistics. While many such approaches are available, we will briefly
mention the heuristic of generalised cross-validation, a popular
method for choosing the bandwidth \(h\) in nonparametric regression
\citep{golub_generalized_1979}.

The GCV criterion
\[GCV(u,h_1,h_2,h) = \frac{\frac1{n_2} \sum_{l=0}^{n_2-1}
  (\trlu-\hrlu)^2}{\left(\frac1{n_2}\sum_{l=0}^{n_2-1}W_{n,l}(l/n_2)\right)^2}\]
provides an estimate of the \(L^2\) error in \(\trtu.\) We can then
choose \(u,\,h_1,\,h_2\) and \(h\) to minimise this criterion, using
any standard global optimisation algorithm.

Simple tests on simulated data show that minimising this criterion
provides sensible choices of the parameters, for both estimates
\(\tctu\) and \(\trtu.\) (We note that it is inadvisable to apply GCV
to \(\tctu\) directly, as the criterion then favours parameters which
shrink the estimate to zero.)

We have thus described new estimates of the volatility \(c_t,\) and
normalised volatility or rate process \(r_t;\) however, we have yet to
consider their performance.  In the following sections, we will show
that, for suitable choices of the parameters, these estimates can
obtain good rates of convergence over both the semimartingale model
\eqref{eq:sm}, and time-changed L\'evy model \eqref{eq:tc}.

\section{Semimartingale and L\'evy models} 
\label{sec:mod}

In this section, we will describe the assumptions we make on our
data. Our assumptions will be satisfied by common models in both
nonparametric statistics and quantitative finance. Under these
assumptions, we may then proceed to show that our estimates \(\tctu\)
and \(\trtu\) achieve good rates of convergence.

\phantomsection \label{def:time}
We first assume that the log-prices \(X_t\) are generated under
the general It\=o semimartingale model \eqref{eq:sm}, and our
observations \(Y_j\) come from the microstructure noise model
\eqref{eq:ms}, with fixed time horizon \(T = 1.\) For simplicity, we
do not consider further other choices of \(T,\) but we note that
similar results can also be proved when \(T \to \infty,\) \(T/n \to
0.\)

\phantomsection \label{def:fp}
Our assumptions will then be stated in terms of a filtration \(\ft,\)
\(t \in [0,1],\) with respect to which the semimartingale
decomposition \eqref{eq:sm} and zero-mean condition of \eqref{eq:ms}
hold.  As in \citet{jacod_microstructure_2009}, to allow for the
modelling of microstructure noise, we will not assume that the
filtration \(\ft\) is right-continuous. Instead, we will require that
the semimartingale decomposition \eqref{eq:sm} is also valid with
respect to the filtration \(\ftp = \bigcap_{s >t} \mathcal F_s,\) and
that the noises \(\ej\) are \(\fjp\)-measurable.

\phantomsection \label{def:ss}
We then let \(\ms\) denote the class of probability measures \(\P\)
satisfying the above conditions, on some filtered measurable space
\((\Omega, \mathcal F, \ft).\) We will also make some further
assumptions on the characteristics \(b_t,\) \(c_t\) and \(\nu_t,\) and
errors \(\ej.\)

We begin by defining a smoothness assumption on \(\ft\)-adapted
processes. We will require in the following that the volatility
\(c_t,\) and other characteristic processes of the log-prices \(X_t\)
and noises \(\ej,\) satisfy this assumption with high probability.

\begin{definition}
  \label{def:iam}
  Let \(S \in [0,1]\) be an \(\ft\)-stopping time, \(\alpha \ge \oh,\)
  \(D > 0,\) and set \(\alpha_0 = 1 \wedge \alpha.\) We define
  \(\iam\) to be the class of \(\ft\)-adapted complex-valued processes
  \(Z_t,\) for which the stopped process \(Z_{t\wt}\) satisfies:
  \begin{enumerate}
  \item \(\abs{Z_{t\wt}} \le D, \, t \in [0,1];\)
  \item \(\E\left[\abs{Z_{s\wt}-Z_{t\wt}}^2\mid\ftp\right] \le
    D^2(s-t)^{2\alpha_0}, \, 0 \le t \le s \le 1;\) and
  \item if \(\alpha > 1,\) then letting \(m\) denote the largest
    integer smaller than \(\alpha,\) \(Z_{t\wt}\) has \(m\)-th real
    derivative \(Z_{t\wt}^{(m)}\) satisfying
    \[\E\left[\abs{Z_{s\wt}^{(m)}-Z_{t\wt}^{(m)}}^2\mid\ftp\right] \le
    D^2(s-t)^{2(\alpha-m)}, \quad 0 \le t \le s \le 1.\]
  \end{enumerate}
\end{definition}

The classes \(\iam\) thus contain all processes \(Z_t\) which, when
stopped by \(S,\) are bounded and smooth in quadratic mean. We note
that these classes describe a variety of processes. Firstly, the
classes \(\iom\) contain all processes which are almost-surely
\(\alpha\)-H\"older, with constant \(D.\) More generally, the
following lemma shows that the classes \(\ihm\) can describe all
c\`agl\`ad It\=o semimartingales with locally bounded characteristics.

\begin{lemma}
  \label{lem:ihm}
  Let \(Z_t\) be a c\`agl\`ad It\=o semimartingale, having
  decomposition
  \begin{multline*}
    Z_t = Z_0 + \int_0^{t^-} b_{Z,s}\,ds + \int_0^{t^-}
    \sqrt{c_{Z,s}}\,dB_{Z,s}\\ + \int_0^{t^-} \int_\R x
    \,(\mu_Z(dx,ds) - 1_{\abs{x} <
      1}\,\nu_{Z,s}(dx)\,ds)\end{multline*} with respect to both
  filtrations \(\ft\) and \(\ftp.\) Suppose the processes \(b_{Z,t}\)
  and \(c_{Z,t}\) are locally bounded, as are the processes
  \(\int_{\abs{x} < R} x^2\,\nu_{Z,t}(dx)\) for all \(R > 0.\) Then
  for each \(D > 0,\) there exists an event \(\te \in \fz,\) and
  \(\ft\)-stopping time \(S,\) such that on \(\te,\) \(Z_t \in \ihm,\)
  with \(\P(\te \cap \{S = 1\}) \to 1\) as \(D \to \infty.\)
\end{lemma}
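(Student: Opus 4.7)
The plan is to construct $S$ by localizing all the relevant processes simultaneously at a common level $K = K(D)$, chosen proportional to $D$, and to take $\te$ as the $\fz$-event ensuring the initial value $Z_0$ is suitably controlled. By the local boundedness hypotheses, there exist $\ft$-stopping times $\tau_1,\tau_2,\tau_3$, each satisfying $\P(\tau_i = 1) \to 1$ as $K \to \infty$, such that $\abs{b_{Z,t}}$, $c_{Z,t}$, and $\int_{\abs{x}<K} x^2\,\nu_{Z,t}(dx)$ are bounded by $K$ on $[0,\tau_i]$. Let $\tau_4$ be the first time a jump of $Z$ of size at least $K$ occurs, $\tau_5$ the first time $\sup_{s \le t}\abs{Z_s}$ reaches $D$, and set $S = \tau_1 \wedge \cdots \wedge \tau_5 \wedge 1$. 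Taking $\te = \{\abs{Z_0} < D\} \in \fz$, condition (i) of \autoref{def:iam} follows from the c\`agl\`ad property of $Z$ and the definition of $\tau_5$, while condition (iii) is vacuous since $\alpha = 1/2 < 1$.

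The heart of the proof is condition (ii). I would decompose the stopped increment as
\[Z_{s\wt} - Z_{t\wt} = A + M^c + M^s + M^b + B,\]
where $A$ is the drift contribution, $M^c$ the continuous martingale part, $M^s$ the compensated small-jump martingale (over $\abs{x}<1$), $M^b$ the manually-compensated large-jump martingale (over $1 \le \abs{x} < K$), and $B$ the corresponding compensator of these large jumps. Each of $\abs{A}$ and $\abs{B}$ is bounded by $CK(s-t)$; for $B$ this uses the inequality $\int_{1 \le \abs{x} < K}\abs{x}\,\nu_{Z,t}(dx) \le \int_{\abs{x}<K} x^2\,\nu_{Z,t}(dx) \le K$. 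Each of the three martingales has predictable quadratic variation at most $K(s-t)$ by the It\=o isometry and the compensator formula for Poisson random measures. Combining these bounds gives
\[\E\bigl[\abs{Z_{s\wt}-Z_{t\wt}}^2 \bigm| \ftp\bigr] \le C\bigl(K^2(s-t)^2 + K(s-t)\bigr) \le C'K^2(s-t),\]
so choosing $K$ proportional to $D$ yields the required bound $D^2(s-t)$.

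It remains to check that $\P(\te \cap \{S=1\})\to 1$ as $D \to \infty$. This follows from $\P(\te) \to 1$ (since $\abs{Z_0}$ is a.s.\ finite), together with $\P(\tau_i = 1)\to 1$ for $i=1,2,3$ (by local boundedness), $\P(\tau_4=1)\to 1$ (since jumps of size $\ge 1$ form a locally finite point process on $[0,1]$), and $\P(\tau_5=1)\to 1$ (by pathwise boundedness of the c\`agl\`ad process $Z$ on $[0,1]$), all holding as $K \to \infty$.

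The main obstacle is the uncompensated large-jump term in the given semimartingale decomposition. My manual compensation of jumps of size in $[1,K)$ introduces an $O(K)$ drift contribution, which is absorbable into the $O(K^2(s-t)^2)$ part of the bound only because the hypothesis provides $\int_{\abs{x}<R} x^2\,\nu_{Z,t}(dx)$ locally bounded for \emph{every} $R > 0$, in particular for $R = K$. Without this uniform-in-$R$ property, the drift created by the manual compensation could not be controlled linearly in $K$, and the argument would break down.
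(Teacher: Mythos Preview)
Your approach is essentially the paper's: localize the characteristics, truncate the large jumps, and bound the resulting predictable and martingale pieces separately. The paper packages this slightly differently---it defines $S$ as a single supremum rather than a minimum of stopping times, and it kills large jumps by using the bound $|Z_s|\le R$ (so no jump before $S$ can exceed $3R$) instead of introducing an explicit first-large-jump time $\tau_4$---but the content is the same.

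There is one quantitative slip worth flagging. You take the jump truncation level and the bound on $\int_{|x|<K} x^2\,\nu_{Z,t}(dx)$ both equal to the same $K$, and then assert $\P(\tau_3=1)\to 1$ as $K\to\infty$. The hypothesis only gives local boundedness of $t\mapsto \int_{|x|<R} x^2\,\nu_{Z,t}(dx)$ for each \emph{fixed} $R$; it says nothing about how the supremum grows with $R$. For instance, if $\nu_{Z,t}(dx)=\tfrac32|x|^{-3/2}\,1_{|x|\ge 1}\,dx$ (a valid L\'evy measure, constant in $t$, so every hypothesis is trivially satisfied), then $\int_{|x|<K} x^2\,\nu_{Z,t}(dx)=2(K^{3/2}-1)>K$ for all large $K$, and your $\tau_3\equiv 0$. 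The paper's fix is to decouple the two scales: let the truncation level $R$ grow \emph{slowly} with $D$, while the bound on the characteristics is a fixed multiple of $D$. Then $\P\bigl(\sup_t \int_{|x|<3R} x^2\,\nu_{Z,t}(dx)\le D/2\bigr)\to 1$, since for each fixed $R$ the supremum is a.s.\ finite and a diagonal choice $R=R(D)\to\infty$ slowly enough suffices. With that one adjustment your argument goes through as written, and the final inequality $D^2(s-t)^2/2+D(s-t)\le D^2(s-t)$ (for $D\ge 2$, $s-t\le 1$) matches the paper's.
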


We now define our assumptions on the observations \(Y_j.\)
Essentially, our results will be proved in models where with high
probability, the drift \(b_t\) is bounded, and the stochastic
volatility \(c_t,\) jump process \(\nu_t,\) and noise variance
\(\sigma_t^2\) are bounded and smooth.

\begin{definition}
  \label{def:scm}
  Let \(\alpha \ge \oh,\) \(\beta \in [0,2],\) \(\gamma \in [0,1],\)
  and \(0 < C \le D.\) We define \(\saecm\) to be the class of
  probability measures \(\P \in \ms\) which satisfy the following
  conditions, on an event \(\te \in \fz,\) and for a stopping time \(S
  \in [0,1],\) with \(\P(\te \cap \{S = 1\}) \ge 1 - \gamma.\)
  \begin{enumerate}
  \item The noises \(\ej\) have variance
    \[\E[\ej^2 \mid \fj] = \sigma^2_{j/n},\]
    for a latent process \(\sigma^2_t \in \iam;\) and have bounded fourth moment,
    \[\E[\ej^4 \mid \fj] \le D^2, \quad j/n \le S.\]
  \item
    The drift process \(b_t\) is bounded,
    \[\abs{b_t} \le D, \quad t \in [0,S].\]
  \item The volatility process \(c_t \in \iam,\) and is also bounded
    below,
    \[c_t \ge C, \quad t \in [0,S].\]
  \item The jump activity is of index at most \(\beta,\)
    \[\int_\R (1 \wedge \abs{x}^\beta) \,\nu_t(dx) \le D, \quad t
    \in [0,S];\] and if \(\beta > 1,\) for any measurable function
    \(f:\R \to \C\) with \[\abs{f(x)} \le (1 \wedge x^2),\] we have
    \[\int_\R f(x)\,\nu_t(dx) \in \iam.\]
\end{enumerate}
  \phantomsection \label{def:sc}
Also define \(\sacm = \sazcm,\) and let \(\sa\) denote the class of
probability measures \(\P\) which lie in some \(\saecm\) for each
\(0 < C \le D,\) with \(\gamma \to 0\) as \(C \to 0,\) \(D \to
\infty.\)
\end{definition}

In the following, our theoretical results will be first proved for the
classes \(\sacm,\) where the characteristics of the log-price \(X_t\)
and noises \(\ej\) are almost-surely bounded and smooth. These classes
will be the most convenient for our analysis, and will allow us to
draw comparisons with previous nonparametric results in the
literature.

We note that these classes impose quite strong conditions on our
process \(X_t;\) in particular, they require the volatility \(c_t\) to
be bounded away from zero. However, we will also generalise our
results to the larger classes \(\sa,\) which only require these
conditions to hold locally; in particular, they only impose the weaker
bound that \(c_t > 0\) almost-surely.

The parameters \(\alpha\) and \(\beta\) govern two
different smoothness properties of \(X_t.\) The parameter \(\alpha\)
measures the smoothness of the characteristics of \(X_t\) and the
\(\ej\) over time: if \(X_t\) is a L\'evy process, and the \(\ej\)
have constant variance, the above conditions can hold for any value of
\(\alpha.\) In contrast, the parameter \(\beta\) governs the jump
activity of \(X_t,\) and thus also the smoothness of its sample
paths.

We note that in typical semimartingale models, the parameter
\(\alpha=\tfrac12;\) we include here the case \(\alpha > \tfrac12\) to
allow for comparison with previous results in nonparametrics. We also
note that, since the smoothness \(\alpha\) is measured in square mean,
the case \(\alpha = \tfrac12\) allows for jumps in the volatility, or
in other characteristics; it thus allows the characteristics to
depend smoothly on \(X_t,\) for any level \(\beta\) of jump
activity. More generally, \autoref{lem:ihm} shows that the classes
\(\sh\) contain most common models for financial processes.

As some of our results will be specific to the time-changed
model~\eqref{eq:tc}, we additionally define submodels describing this
case. We note that our definition includes a choice of normalisation;
as in \eqref{eq:def-r}, we assume the rate process \(r_t\) integrates
to one.

\begin{definition}
\label{def:ta}
Let \(\mt\) denote the class of probability measures \(\P \in \ms\)
satisfying \eqref{eq:tcsm}, for a drift \(b \in \R,\) volatility \(c >
0,\) L\'evy measure \(\nu,\) and rate process \(r_t > 0\) given by
\eqref{eq:def-r}.
Also define the models \(\tacm = \satcm \cap \mt,\) and \(\ta = \sat
\cap \mt.\)
\end{definition}

This choice of normalisation is most convenient for our results, but
we note that others are also possible; for example, we might prefer
the deterministic normalisation \(\E[\int_0^T r_s\,ds] = T.\) If we
made an ergodicity assumption on the process \(r_t,\) as in
\citet{figueroa-lopez_nonparametric_2009}, then for suitable \(T \to
\infty,\) \(T/n \to 0,\) we would have that \(\int_0^{T} r_s\,ds\) is
close to \(\E[\int_0^{T}r_s\,ds].\) The two normalisations would then
also be close, and our arguments would apply equally in either
setting.

In the following, for simplicity, we will concentrate on
\autoref{def:ta}.  We then have that in particular, the class \(\toh\)
covers most common financial models for time-changed L\'evy
processes. With these definitions, we are now ready to give our
results on the performance of our estimates.

\section{Convergence results}
\label{sec:con}

In this section, we will show that our estimates \(\tctu\) and
\(\trtu\) have good rates of convergence, in both the general
semimartingale models \(\sa,\) and time-changed L\'evy models \(\ta.\)
In particular, we will establish that in \(\ta,\) the time-change
\(r_t\) can be recovered at minimax rates under arbitrary jump
activity.

We first define some additional processes which will be relevant to
our results. We set
\[\phantomsection \label{def:vt}
\vtu = \exp(-c_t(u)u^2)\ptu, \qquad \ptu = \exp(-\kappa \sigma^2_t
u^2),\]
processes we will show describe the means of the estimates \(\hvlu\)
and \(\hplu.\) We also set
\[\phantomsection \label{def:tt}
\rho_t^2(u) = \tfrac12(1 + \varphi_t(2u)) - \varphi_t^2(u), \qquad
\tau_t^2(u) = \rho_t^2(u)/n_1\varphi_t^2(u),\]
processes we will show describe the variances of the estimates
\(\hvlu\) and \(\hclu.\)

We now begin with a result on the accuracy of the preliminary
estimates \(\hclu.\) At this stage, our results will be proved solely
in the bounded semimartingale model \(\sacm;\) we will return later to
the consequences for our other models.

We can establish that, on events with high probability, our
preliminary estimates \(\hclu\) have asymptotic mean \(\clu,\) and
variance \(\ulu.\) We can further show that the errors in these statements
are of order \(n^{-\alpha_1}\) and \(n^{-\alpha_2}\) respectively,
where the rates
\[\phantomsection \label{def:ao}
\alpha_1 = \frac14 \wedge \frac{3\alpha}8,\qquad\alpha_2 =
\frac{\alpha_1}2 + \frac1{16}.\]

\begin{theorem}
  \label{thm:est}
  Fix \(u,h_1,h_2 > 0,\) \(\alpha \ge \oh,\) \(\beta \in [0,2],\) \(0
  < C \le D,\) and suppose \(\P \in \sacm.\) Then the local
  volatility estimates \(\hclu\) are \(\fls\)-measurable, and we have
  events \(E_l \in \fls,\) satisfying
  \[\P(E_l^c \mid \fl) \le \exp(-A n^{1/8})\]
  for a constant \(A > 0,\) on which
  \begin{align*}
    \E[(\hclu - \clu)1(E_{l}) \mid \fl] &= O(n^{-\alpha_1}),\\
    \E[(\hclu-\clu)^21(E_{l}) \mid \fl] &= \ulu + O(n^{-\alpha_2}).
  \end{align*}
  Furthermore, these results are uniform over \(l = 0, \dots, n_2-1,\)
  and \(\P \in \sacm.\)
\end{theorem}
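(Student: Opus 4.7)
My strategy is to express $\hclu - \clu$ as an explicit function of $\hvlu - \vlu$ and $\hplu - \plu$ via a Taylor expansion of the logarithm, and to exploit the fact that on a well-chosen event both quantities are small. The $\fls$-measurability is immediate from the definitions, as is the uniformity in $\P \in \sacm$ (all constants will depend only on $C, D, \alpha, \beta, u, h_1, h_2$). The first step is a local characteristic-function identity of the form $\E[\cos(u\hxk) \mid \fkp] = \vku + O(n^{-\alpha_1})$ together with the noise analogue $\E[\exp(-\kappa u^2 \hssk) \mid \fkp] = \pku + O(n^{-\alpha_1})$. Applied to the signal part of $\hxk$, which approximates the stochastic integral $\int_{k/n_0}^{(k+1)/n_0} \Phi_n(s)\,dX_s$, the L\'evy--Khintchine formula produces a Brownian exponent $-c_{k/n_0}u^2$ (via $\int_0^1 \Phi^2 = 2$) and a jump exponent $-\tfrac{1}{n_0}\int_0^1\int_\R(1-\cos(\sqrt{n_0}\Phi(w)ux))\nu_{k/n_0}(dx)\,dw$, which together match $\vku$ exactly by the definition of $\ctu$; symmetry of $\Phi$ over a full period kills the drift and the odd-order cosine corrections. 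The noise scaling $\kappa = 4\pi^2 n_0^2/n$ is precisely what emerges from the $\int_0^1 (\Phi')^2 = 8\pi^2$ normalisation of the differenced noises.

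Next, averaging over $k \in K_l$ and applying the smoothness bound from $\iam$ to the characteristics across the bin of length $1/n_2 \sim n^{-3/8}$, I would deduce $\E[\hvlu \mid \fl] = \vlu + O(n^{-\alpha_1})$ and $\E[\hplu \mid \fl] = \plu + O(n^{-\alpha_1})$, with conditional variances $\tlu/n_1 + O(\text{smaller})$ and $O(n_0/(n n_1))$ respectively. Since $\cos(u\hxk)$ and $\exp(-\kappa u^2 \hssk)$ lie in $[-1,1]$ and $[0,1]$ and the random variables across $k \in K_l$ are built from disjoint blocks of observations (hence approximately conditionally independent given the characteristic processes), Hoeffding's inequality yields, for $E_l = \{|\hvlu - \vlu| \le \delta\} \cap \{|\hplu - \plu| \le \delta\}$ with $\delta$ a small fixed constant (chosen so that $\hvlu/\hplu$ remains in a compact subset of $(0,\infty)$), the bound $\P(E_l^c \mid \fl) \le \exp(-A n_1) = \exp(-A n^{1/8})$.

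On $E_l$, the Taylor expansion
\[\log|\hvlu/\hplu| = \log(\vlu/\plu) + (\delta_v/\vlu - \delta_p/\plu) - \tfrac12\bigl((\delta_v/\vlu)^2 - (\delta_p/\plu)^2\bigr) + R\]
is legitimate, where $\delta_v = \hvlu - \vlu$, $\delta_p = \hplu - \plu$ and $R$ is a third-order remainder. The elementary identity $\tfrac12(1 + \varphi_{l/n_2}(2u)) - \vlu^2 = \tlu$ shows that $\E[\hulu \mid \fl] = \ulu + O(n^{-1/4})$, so that after dividing by $-u^2$ the bias correction $\hulu/2$ cancels the deterministic second-order term $\tfrac{1}{2u^2}(\E[\delta_v^2\mid\fl]/\vlu^2 - \E[\delta_p^2\mid\fl]/\plu^2)$ from the expansion. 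The residual conditional mean error is then dominated by the characteristic-function bias ($O(n^{-\alpha_1})$) and by $\E[R \mid \fl] = O(\E[\delta_v^4 \mid \fl]) = O(n_1^{-2}) = O(n^{-1/4})$ (the fourth-moment bound following from near-independence across $k$), giving overall $\alpha_1 = \tfrac14 \wedge \tfrac{3\alpha}{8}$. The conditional second moment is dominated by the linear term $-\delta_v/(u^2\vlu) + \delta_p/(u^2\plu)$, whose variance equals $\ulu$ up to smaller-order corrections; the cross terms with the bias contribute $O(n^{-\alpha_1}\cdot n_1^{-1/2}) = O(n^{-\alpha_2})$, matching $\alpha_2 = \alpha_1/2 + 1/16$.

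The principal obstacle is controlling the characteristic-function approximation uniformly over $\beta \in [0,2]$: near $\beta = 2$ the jump term in $\ctu$ is not asymptotically negligible, and only by absorbing it into the definition of $\clu$ itself, combined with the symmetric choice of $\Phi$ that kills the drift and the odd-order cosine corrections, can one avoid losing rate to the stable-jump scaling $n^{1-\beta/2}$. A secondary subtlety lies in the bias correction itself: because $\hulu$ is a nonlinear function of $\hvlu$ and $\widehat\varphi_l(2u)$, a further Taylor expansion of $\hulu$ about $(\vlu, \varphi_{l/n_2}(2u))$ is needed to verify that its conditional expectation matches $\ulu$ to the required accuracy, and the resulting higher-order terms must be shown to fall within the target $O(n^{-\alpha_1})$ bias and $O(n^{-\alpha_2})$ variance remainders.
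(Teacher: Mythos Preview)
Your overall strategy matches the paper's: decompose $\hclu - \clu$ via a Taylor expansion of the logarithm, control the moments of $\hvlu - \vlu$ and $\hplu - \plu$, show that the bias correction $\hulu/2$ cancels the quadratic term, and handle $\hulu$ itself by a secondary expansion. The event $E_l$ and its exponential probability bound are treated in the same spirit (the paper uses Azuma's inequality rather than Hoeffding's, since the summands $\cos(u\hxk)$ form a martingale-difference array relative to $(\fk)$ rather than an independent one, and it defines $E_l$ via deterministic lower bounds $\hvlu,\hplu \ge \zeta(u)$ rather than via deviations from $\vlu,\plu$; either choice is workable).

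There is, however, a genuine gap in your bias argument. You expand $\log(1+Z)$ only through the quadratic term and treat the cubic-and-higher part as a remainder $R$, asserting $\E[R \mid \fl] = O(\E[\delta_v^4 \mid \fl]) = O(n^{-1/4})$. But a third-order remainder is controlled by $\E[|\delta_v|^3]$, not by $\E[\delta_v^4]$; and Cauchy--Schwarz gives only
\[
\E\bigl[|\delta_v|^3 1(E_l)\mid\fl\bigr] \le \bigl(\E[\delta_v^2\mid\fl]\bigr)^{1/2}\bigl(\E[\delta_v^4\mid\fl]\bigr)^{1/2} = O(n^{-\alpha_2}),
\]
which is strictly weaker than the required $O(n^{-\alpha_1})$ for every $\alpha \ge \tfrac12$ (since $\alpha_2 = \alpha_1/2 + 1/16 < \alpha_1$ whenever $\alpha_1 > 1/8$, and here $\alpha_1 \ge 3/16$). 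The paper closes this gap by expanding $\log(1+Z_{\varphi,l})$ to \emph{fourth} order and bounding the \emph{signed} third moment directly: writing $Z_{\delta,k} = \cos(u\hxk) - \vlu$ and conditioning each product on $\fk$ at its largest index, one obtains
\[
\E\bigl[(\hvlu-\vlu)^3 \mid \fl\bigr] = O(n^{-\alpha_1}),\qquad
\E\bigl[(\hvlu-\vlu)^4 \mid \fl\bigr] = O(n^{-\alpha_1}),
\]
using that $\E[\E[Z_{\delta,k}\mid\fk]^2\mid\fl] = O(n^{-2\alpha_1})$. This near-martingale cancellation in the signed cubic term is essential for the claimed bias rate, and cannot be recovered from any bound on absolute moments alone.
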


We thus have that the estimates \(\hclu\) behave roughly like
\(n^{3/8}\) observations of the adjusted volatility process \(\ctu,\)
under errors with variance \(n^{-1/8}.\) In other words, we obtain an
accuracy like observing the process \(\ctu\) under \(n^{-1/4}\) white
noise. While our estimates \(\hclu\) also include an additional bias
term, and are accurate only on a set of high probability, we will
nonetheless see that they are good enough to accurately recover the
volatility \(c_t\) or time-change \(r_t.\)

We now establish that our regression estimate \(\tctu\) is a good
estimate of the adjusted volatility \(\ctu.\) We will measure the
accuracy of our estimates both pointwise, and in the \(L^2\)-norm,
\[\norm{f}^2_2 = \int_0^1 f(t)^2\,dt.\]
In these metrics, we will show that \(\ctu\) can be recovered at the
rate \(n^{-\alpha_3},\) where \[\phantomsection \label{def:at} \alpha_3 =
\frac\alpha{2(2\alpha+1)}\] is the standard minimax rate for recovering
a function of smoothness \(\alpha\) under \(n^{-1/4}\) white noise.

\begin{theorem}
  \label{thm:reg}
  Fix a kernel \(K\) as in \autoref{sec:est}, \(N \in \N,\) \(u \in
  \R,\) \(h_1, h_2 > 0,\) \(\alpha \in [\oh, N],\) \(\beta \in
  [0,2],\) \(0 < C \le D,\) let \(h \sim n^{-1/2(2\alpha+1)},\) and
  suppose \(\P \in \sacm.\) We then have an event \(E,\) satisfying
  \[\P(E^c \mid \fz) \le \exp(-An^{1/8})\]
  for a constant \(A > 0,\) on which
  \begin{align*}
    \E[\abs{\tctu - \ctu}^21(E) \mid \fz]^{1/2}&= O(n^{-\alpha_3}),\\
    \intertext{uniformly in \(t \in [0,1],\) and}
    \E[\norm{\tcu - c(u)}_2^21(E) \mid \fz]^{1/2} &= O(n^{-\alpha_3}).
  \end{align*}
  Furthermore, these results are uniform over \(\P \in \sacm.\)
\end{theorem}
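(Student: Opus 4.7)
The plan is to use standard local polynomial regression arguments combined with the pointwise control provided by \autoref{thm:est}. First I would record the deterministic properties of the weights: on an event where the matrix \(V_n(t)\) is uniformly invertible (which holds once \(n_2 h\) is large enough, using positivity of \(K\) and \(\int K=1\)), the \(W_{n,l}(t)\) reproduce polynomials of degree \(N-1\), are supported on \(\{l : \abs{l/n_2 - t}\le h\}\), and satisfy \(\abs{W_{n,l}(t)}=O(1/(n_2 h))\) and \(\sum_l\abs{W_{n,l}(t)}=O(1)\), uniformly in \(t\). Call \(E\) the intersection of the event of uniform invertibility with \(\bigcap_l E_l\) from \autoref{thm:est}; then \(\P(E^c\mid\fz)\le n_2\exp(-An^{1/8})\), which is still \(\exp(-A'n^{1/8})\) after adjusting the constant.

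Next I would decompose the error, with \(\clu\) as in \autoref{thm:est}, as
\begin{align*}
\tctu - \ctu &= \sum_{l} W_{n,l}(t)\bigl(\hclu - \E[\hclu 1(E_l)\mid\fl]\bigr)\\
&\quad + \sum_{l} W_{n,l}(t)\bigl(\E[\hclu 1(E_l)\mid\fl] - \clu\bigr) + \sum_{l} W_{n,l}(t)\bigl(\clu - \ctu\bigr),
\end{align*}
ignoring an error supported on \(E^c\) which is handled by the exponential tail. The first (stochastic) term has conditional mean zero at level \(\fl\); summing over \(l\) with the martingale property in the filtration \((\fls)_l\) gives conditional variance \(\sum_l W_{n,l}(t)^2 (\ulu + O(n^{-\alpha_2}))\), and since \(\ulu=O(n_1^{-1})=O(n^{-1/8})\) this is \(O(n^{-1/8}/(n_2 h))=O(n^{-1/2}/h)\). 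The second (pre-averaging bias) term is \(O(n^{-\alpha_1})\) by \autoref{thm:est}, and since \(\alpha\ge 1/2\) one checks \(\alpha_1 \ge \alpha_3\), so this is negligible at the target rate.

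The third (smoothing bias) term is the principal issue. Here I would use that \(\ctu\), as a process in \(t\), inherits its smoothness from the \(\iam\) hypotheses: \(c_t\) lies in \(\iam\) directly, and for \(\beta>1\) condition (iv) of \autoref{def:scm} applied to \(f_w(x)=(1-\cos(\sqrt{n_0}\Phi(w)ux))/(n_0 u^2)\), bounded by \(C(1\wedge x^2)\), puts the jump integral in \(\iam\) as well. Taylor expanding \(\ctu\) at \(t\) to order \(m=\lfloor\alpha\rfloor\), the polynomial-reproduction property of the weights kills the leading \(m+1\) terms, and condition (iii) of \autoref{def:iam} bounds the remainder in conditional quadratic mean by \(D(s-t)^\alpha\); after multiplying by \(W_{n,l}(t)\) and using Cauchy--Schwarz on the weights, this gives a contribution of order \(h^\alpha\) in conditional \(L^2\). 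Balancing \(h^{2\alpha}\) against \(n^{-1/2}/h\) yields \(h\sim n^{-1/(2(2\alpha+1))}\) and the rate \(n^{-\alpha_3}\). The \(L^2\)-in-\(t\) statement then follows by Fubini from the pointwise bound, since all constants are uniform in \(t\) and in \(\P\in\sacm\).

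The main technical obstacle is the smoothing bias: because \(\ctu\) is genuinely random, Taylor's theorem must be applied via the quadratic-mean smoothness of \autoref{def:iam}, requiring the \(m\)-th derivative hypothesis rather than pointwise H\"older bounds, and the \(\fz\)-conditional expectations must be tracked through the Taylor remainder. Secondary issues are the uniform invertibility of \(V_n(t)\) and the clean propagation of the indicators \(1(E_l)\) through the martingale variance computation so that both the pointwise and \(L^2\) bounds hold on the same event \(E\).
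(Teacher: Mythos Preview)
Your proposal is correct and follows essentially the same route as the paper: the same weight properties from local polynomial theory, the event \(E=\bigcap_l E_l\), a three-term decomposition into estimator bias, estimator variance, and regression bias, Taylor expansion combined with the quadratic-mean smoothness of \autoref{def:iam} for the last term, and Fubini for the \(L^2\) statement. The only notable difference is that the paper centers the variance term via \(\hcolu=\E[(\hclu-\clu)1(E)\mid\fl]/\P(E\mid\fl)\), which makes \(\E[\hctlu 1(E)\mid\fl]=0\) exactly and hence the cross terms vanish identically, whereas your centering by \(\E[\hclu 1(E_l)\mid\fl]\) leaves exponentially small residuals in the orthogonality; since you already flag the indicator bookkeeping as a secondary issue, this is a cosmetic rather than substantive gap.
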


We thus have that the regression estimates \(\tctu\) accurately
recover \(\ctu,\) in the model \(\sacm.\) It remains to deduce
consequences for the volatility \(c_t\) and time-change \(r_t,\) in
the more general models \(\sa\) and \(\ta.\) In \(\sa,\) we will
obtain the rate \(n^{-\alpha_4},\) where
\[\phantomsection \label{def:af} \alpha_4 = \alpha_3 \wedge \frac{2 - \beta}4\] depends also on
the jump activity \(\beta\) of the log-price \(X_t.\) When estimating
\(r_t\) in \(\ta,\) however, we will retain the convergence rate
\(n^{-\alpha_3},\) even under arbitrary jump activity.

\begin{corollary}
  \label{cor:reg}
  Let the parameters
  \(K,\,N,\,u,\,h_1,\,h_2,\,\alpha,\,\beta,\,C,\,D\) and \(h\) be as
  in \autoref{thm:reg}.
  \begin{enumerate}
  \item If \(\P \in \sa,\) the estimates \(\tctu\) and \(\trtu\) satisfy
    \begin{align*}
      \abs{\tctu-c_t},\,\abs{\trtu-r_t} &= O_p(n^{-\alpha_4}),\\
      \intertext{uniformly in \(t \in [0,1],\) and}
      \norm{\tcu-c}_2,\,\norm{\tru-r}_2 &= O_p(n^{-\alpha_4}).
    \end{align*}
    Furthermore, these results are uniform over \(\P \in \sacm.\)
  \item If also \(\P \in \mt,\) the results for \(\trtu\) hold with
    improved convergence rate \(n^{-\alpha_3}.\)
  \end{enumerate}
\end{corollary}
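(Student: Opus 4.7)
The plan is to derive the corollary from \autoref{thm:reg} in three pieces: a bias bound for $\ctu - c_t$, a combination step for $\tctu$, and a ratio analysis for $\trtu$.

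First I would handle the localisation. For $\P \in \sa$, by definition we may choose constants $C > 0$ and $D < \infty$ so that $\P$ lies in some $\saecm$ with $\gamma$ arbitrarily small; on the event $\te \cap \{S = 1\}$ the full strength of $\sacm$ applies pathwise. Together with the good event $E$ from \autoref{thm:reg} (whose complement has probability $\exp(-An^{1/8})$), this reduces all pointwise and $L^2$ statements to bounds on $\sacm$ up to an event of arbitrarily small probability, which gives $O_p$ control and the claimed uniformity over $\sacm$.

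Next I would control the jump bias $\abs{\ctu - c_t}$. From the definition of $\ctu$ in~\eqref{def:cu}, using $1 - \cos(y) \le 2 \wedge \tfrac12 y^2$, the integrand is bounded by $\min\bigl(\tfrac{2}{n_0 u^2},\, \tfrac12 \Phi^2(w) x^2\bigr)$. Splitting the $x$-integral at the threshold $\epsilon = 1/(\sqrt{n_0} u |\Phi(w)|)$ and using the activity bound $\int_\R (1 \wedge |x|^\beta)\,\nu_t(dx) \le D$ (which yields $\int_{|x|<\epsilon} x^2\,\nu_t(dx) \le D\epsilon^{2-\beta}$ and $\int_{|x|\ge\epsilon}\nu_t(dx) \le D\epsilon^{-\beta}$), one obtains $\abs{\ctu - c_t} = O(n_0^{\beta/2 - 1}) = O(n^{-(2-\beta)/4})$ uniformly in $t \in [0,S]$. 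Combined with \autoref{thm:reg}, the triangle inequality then gives the required $O_p(n^{-\alpha_4})$ pointwise and $L^2$ rates for $\tctu - c_t$.

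For $\trtu$ I would write
\[
\trtu - r_t \;=\; \frac{\tctu - r_t \,\bar c_n(u)}{\bar c_n(u)}, \qquad \bar c_n(u) := \tfrac1{n_2}\sum_m \hcmu,
\]
and control numerator and denominator separately. \autoref{thm:reg} yields $\bar c_n(u) = \int_0^1 \cs(u)\,ds + O_p(n^{-\alpha_3})$, while the bias bound above gives $\int_0^1 \cs(u)\,ds = \int_0^1 c_s\,ds + O(n^{-(2-\beta)/4})$; the lower bound $c_t \ge C$ on $[0,S]$ keeps $\bar c_n(u)$ bounded away from zero with high probability. Substituting $r_t = c_t / \int_0^1 c_s\,ds$ and expanding, the numerator becomes $(\tctu - \ctu) + (\ctu - c_t) + r_t \bigl(\int c_s\,ds - \bar c_n(u)\bigr)$, each term $O_p(n^{-\alpha_4})$. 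For part~(ii), the separability~\eqref{eq:tcsm} gives $\nu_t = \nu r_t$ and hence $\ctu = \tilde c(u) r_t$ with a \emph{deterministic} constant $\tilde c(u) = c + \tfrac{1}{n_0 u^2}\int_0^1\int_\R (1 - \cos(\sqrt{n_0}\Phi(w)ux))\nu(dx)\,dw$; moreover $\int_0^1 \cs(u)\,ds = \tilde c(u)$ exactly since $r_t$ integrates to one. The jump bias then cancels in the ratio and only the $n^{-\alpha_3}$ error from \autoref{thm:reg} remains.

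The main obstacle will be the ratio analysis in the semimartingale case: one must verify that the denominator $\bar c_n(u)$ stays bounded below with controlled probability while handling the numerator uniformly, so that the $O_p$ bounds survive division. Once this is in place the $L^2$ versions follow by integrating the uniform pointwise estimates, and the uniformity over $\sacm$ is inherited directly from that of \autoref{thm:reg}.
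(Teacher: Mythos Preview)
Your strategy matches the paper's: bound $\abs{\ctu - c_t}$ via the activity index, combine with \autoref{thm:reg}, then do a ratio argument for $\trtu$, using in case (ii) that $\ctu = \tilde c(u)\, r_t$ so the jump contribution cancels. Two steps, however, are handled more carefully in the paper than in your sketch.

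First, the localisation. You write that ``on the event $\te \cap \{S=1\}$ the full strength of $\sacm$ applies pathwise,'' and then invoke \autoref{thm:reg}. But \autoref{thm:reg} is a statement about measures $\P \in \sacm$, and conditioning on $\te \cap \{S=1\}$ does not in general produce such a measure: the stopping time $S$ is not $\fz$-measurable, and the conditional expectations in \autoref{def:iam} need not survive conditioning on $\{S=1\}$. The paper instead constructs an auxiliary process $X^S_t$ equal to $X_t$ on $[0,S]$ and extended as a L\'evy process with characteristics $(b_S,c_S,\nu_S)$ afterward, together with matching auxiliary noises; conditionally on $\te$, the law of the auxiliary observations \emph{is} in $\sacm$, so \autoref{thm:reg} applies to the auxiliary estimator $\tctsu$, and on $\{S=1\}$ this coincides with $\tctu$. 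One then lets $C \to 0$, $D \to \infty$ slowly to pass from $\saecm$ to $\sa$.

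Second, the denominator bound. You claim ``\autoref{thm:reg} yields $\bar c_n(u) = \int_0^1 \cs(u)\,ds + O_p(n^{-\alpha_3})$,'' but \autoref{thm:reg} concerns the local-polynomial estimate $\tctu$, not the raw average $\bar c_n(u) = n_2^{-1}\sum_m \hcmu$. The paper handles this by observing that $\bar c_n(u)$ has the same form as $\tctu$ but with constant weights $\widetilde W_{n,l} = 1/n_2$ (bandwidth $h=1$), and then re-running the bias--variance decomposition from the proof of \autoref{thm:reg} to get $\bar c_n(u) - n_2^{-1}\sum_l \clu = O_p(n^{-\alpha_1})$; a separate Jensen argument controls $n_2^{-1}\sum_l \clu - \int_0^1 \ctu\,dt$. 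This is a re-use of the \emph{argument}, not an application of the \emph{statement}, of \autoref{thm:reg}.
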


We note that convergence does depend on the choice of parameters
\(K,\) \(N,\) \(u,\) \(h_1,\) \(h_2\) and \(h,\) and in particular
requires the bandwidth \(h\) to be chosen as in
\autoref{thm:reg}. Adaptive results in this setting are possible, for
example applying Lepski's method to choose \(h,\) and using Azuma's
inequality to control the deviations in \(\hvlu\) and \(\hplu\)
\citep{lepski_optimal_1997}. For simplicity, however, we will treat
these parameters as fixed, noting that they can be chosen
heuristically as in \autoref{sec:est}.

For the time-changed L\'evy model \(\ta,\) as a simple consequence of
results in \citet{munk_lower_2010}, we can further show that our rates
are optimal. We can likewise provide a partially matching lower bound
for the general semimartingale model \(\sa.\)

\begin{theorem}
  \label{thm:lb}
  Let \(\alpha > \oh,\) \(\beta \in [0,2],\) and \(0 < C < D.\)
  \begin{enumerate}
  \item No estimate \(c_t^*\) of \(c_t\) can satisfy
    \[\norm{c^*-c}_2 = o_p(n^{-\alpha_3}),\] uniformly
    over
    \(\P \in \sacm \cap \mt,\) or
    \[\abs{c_t^* - c_t} = o_p(n^{-\alpha_3}),\]
    uniformly over \(t \in [0,1]\) and \(\P \in \sacm \cap \mt.\)
  \item The same results hold for any estimate \(r_t^*\) of \(r_t.\)
  \end{enumerate}
\end{theorem}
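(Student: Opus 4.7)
The plan is to reduce to a simple Gaussian submodel and invoke the lower bound of \citet{munk_lower_2010}. First I would fix a drift \(b=0\), constant L\'evy volatility \(c=1\), no jumps (\(\nu=0\)), and Gaussian noises \(\ej\) with constant variance \(\sigma^2\). Within the time-changed model \(\mt\), the observations then become \(Y_j = B_{R_{j/n}} + \ej\), where \(B\) is a standard Brownian motion and \(R_t = \int_0^t r_s\,ds\). Crucially, in this submodel \(c_t = r_t\) pointwise, so any lower bound for estimating \(c_t\) transfers verbatim to \(r_t\) and vice versa, which will handle both parts (i) and (ii) in one stroke.

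The first main step is to exhibit a sufficiently rich family of rate functions \(r_t\) for which the resulting laws lie in \(\sacm \cap \mt\). I would take deterministic \(r_t\) in a standard \(\alpha\)-H\"older ball, normalised so that \(\int_0^1 r_s\,ds = 1\) and uniformly bounded in \([C', D']\) for constants \(C \le C' \le D' \le D\). For \(\alpha \le 1\) such paths lie in \(\iam\) by direct inspection of \autoref{def:iam}, since the \(L^2\)-smoothness condition reduces in the deterministic case to pointwise H\"older smoothness; for \(\alpha > 1\) one additionally checks H\"older smoothness of the derivatives up to order \(\lfloor \alpha \rfloor\). The remaining conditions of \autoref{def:scm} are immediate as the drift, jump and noise characteristics are trivially bounded and smooth.

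The second step is to apply the lower bound of \citet{munk_lower_2010}, which is stated for precisely this Gaussian time-changed Brownian model under additive Gaussian noise, and shows that no estimator of the spot volatility can attain rate \(o_p(n^{-\alpha_3})\) uniformly over an \(\alpha\)-H\"older ball, in either the \(L^2\) or the pointwise sense (the pointwise bound following by a standard two-hypothesis argument as in \citet{tsybakov_introduction_2009} if needed). Because the constructed submodel is contained in \(\sacm \cap \mt\), the same impossibility must hold over the larger class, giving part (i); and the identification \(c_t = r_t\) in the submodel then gives part (ii) for free.

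The main obstacle is essentially bookkeeping rather than probabilistic depth: confirming that the chosen deterministic H\"older subfamily satisfies the somewhat technical stopping-time-and-quadratic-mean conditions of \autoref{def:iam}, and arguing that the normalisation \(\int_0^1 r_s\,ds = 1\) still leaves enough perturbations (e.g.\ localised bumps of vanishing integral around a reference profile) to reproduce the standard Hölder-class minimax construction underlying \citet{munk_lower_2010}. Once this reduction is verified, no new lower-bound argument is required.
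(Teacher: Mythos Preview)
Your proposal is correct and follows essentially the same route as the paper: reduce to the Gaussian submodel with \(b=0\), \(\nu=0\), constant noise variance and deterministic volatility, then invoke \citet{munk_lower_2010} for the \(L^2\) bound and a two-hypothesis argument for the pointwise bound. The paper differs only cosmetically in part (ii): rather than enforcing \(\int_0^1 c_s\,ds = 1\) so that \(c_t = r_t\) exactly, it keeps the unnormalised Munk--Schmidt-Hieber functions \(c_{\omega,t}\) and separately verifies that the normalised versions \(r_{\omega,t} = c_{\omega,t}/\int_0^1 c_{\omega,s}\,ds\) remain separated at rate \(n^{-\alpha_3}\); your identification \(c_t = r_t\) makes this step automatic.
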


In the general semimartingale model \(\sa,\) if \(\beta\) is large, we
have \(\alpha_3 > \alpha_4,\) and matching lower bounds are more
difficult to establish. We note, however, that our estimates \(\tctu\)
and \(\trtu\) already obtain rates as good as those implied by
previous work under noise.  Furthermore, the recent paper of
\citet{jacod_remark_2012} on the noiseless problem suggests that the
rate \(n^{-\alpha_4}\) is indeed optimal, up to log factors.

It may at first be surprising that the results for \(r_t\) in the
time-changed model \(\ta\) are better than in the general
semimartingale model \(\sa,\) when the jump activity is
large. However, we know that the difficulty in estimating the
volatility \(c_t\) in \(\sa\) comes primarily from distinguishing
\(c_t\) and \(\nu_t.\) We obtain improved convergence rates in \(\ta\)
because in this model, we can estimate the rate process \(r_t\)
without having to separate \(c_t\) and \(\nu_t.\)

We have thus shown that our estimate \(\trtu\) can recover the
time change in a noisy L\'evy model at the minimax rate, equivalent to
observing \(r_t\) under \(n^{-1/4}\) white noise. It can do so without
knowledge of the distribution of the L\'evy process, and under
arbitrary jump activity.

Furthermore, in the general semimartingale setting, where the L\'evy
assumption may be violated, \(\trtu\) remains a valid estimate of the
normalised volatility. In this setting, we again achieve good rates,
governed either by the noise level of \(n^{-1/4},\) or by a bias due
to jump activity, common to all volatility estimates.

\section{Proofs}
\label{sec:proofs}

We now give proofs of our results. We prove results on the preliminary
estimates \(\hclu\) in \autoref{sec:proofs-pre}, and results on
convergence rates in \autoref{sec:proofs-con}. Technical
proofs\notlink\ are given in \applink.

\subsection{Proofs on preliminary estimates}
\label{sec:proofs-pre}

We first prove \autoref{thm:est}, our result bounding the error in our
preliminary estimates \(\hclu.\) Our proof will use a series of
lemmas, controlling the behaviour of the various components of
\(\hclu.\) We begin by stating some technical lemmas; proofs are given
in \applink.

\begin{lemma}
  \label{lem:tech2}
  In the setting of \autoref{thm:est}, fix \(u \in \R,\) and let
  \(\xi_t\) denote (i) \(c_t(u),\) (ii) \(\varphi_t(u),\) or (iii)
  \(\psi_t(u).\) In each case, for \(n \in \N,\) \(0 \le t \le s \le
  1,\) we have
  \[\E[(\xi_s(u) - \xi_t(u))^2 \mid \ftp] =
  O((s-t)^{2\alpha_0} + n^{-1/2}).\] Furthermore, we have (iv)
  \(c_t(u) \le 3D,\) almost surely.
\end{lemma}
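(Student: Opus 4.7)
The plan is to reduce each claim to the boundedness and smoothness assumptions of \autoref{def:scm}, writing \(c_t(u)=c_t+J_t\) with
\[J_t = \frac{1}{n_0 u^2}\int_0^1\!\int_\R\bigl(1-\cos(\sqrt{n_0}\Phi(w)ux)\bigr)\,\nu_t(dx)\,dw,\]
and handling the diffusive part directly from \(c_t\in\iam\). For (iv), I split the inner integral at \(|x|=1\): on \(|x|\le1\) the bound \(1-\cos(y)\le y^2/2\) together with \(x^2\le(1\wedge|x|^\beta)\) (valid for \(\beta\le2\)) gives a contribution bounded by \(\int_0^1\Phi(w)^2/2\,dw\cdot D=D\), while on \(|x|>1\) the bound \(1-\cos\le2\) and \(\nu_t(\{|x|>1\})\le D\) yield \(\le 2D/(n_0u^2)\le D/2\) for \(n\) large. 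Combined with \(c_t\le D\), this gives \(c_t(u)\le 3D\) almost surely.

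For (i), \((c_s(u)-c_t(u))^2\le 2(c_s-c_t)^2+2(J_s-J_t)^2\); the first term contributes \(O((s-t)^{2\alpha_0})\) by \(c_t\in\iam\), so I focus on \(J_t\), splitting by jump activity. If \(\beta>1\), let \(f_w(x)=(1-\cos(\sqrt{n_0}\Phi(w)ux))/(n_0u^2)\); then \(|f_w(x)|\le C_w(1\wedge x^2)\) with \(C_w=\max(\Phi(w)^2/2,\,2/(n_0u^2))\), and \(\int_0^1 C_w^2\,dw=O(1)\) for \(n\) large. The final clause of \autoref{def:scm}(iv) then gives \(\int(f_w/C_w)(x)\,\nu_t(dx)\in\iam\), and applying Fubini and Cauchy--Schwarz over \(w\) yields
\[\E[(J_s-J_t)^2\mid\ftp]\le D^2(s-t)^{2\alpha_0}\int_0^1 C_w^2\,dw=O((s-t)^{2\alpha_0}).\]
If \(\beta\le1\), no such smoothness is available, so instead I show \(J_t\) itself is small. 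Splitting the \(x\)-integral at \(|x|=n_0^{-1/2}\) and using \(x^{2-\beta}\le n_0^{(\beta-2)/2}\) on the lower part, combined with the bound \(\nu_t(\{|x|>n_0^{-1/2}\})\le(n_0^{\beta/2}+1)D\), yields \(|J_t|=O(n_0^{(\beta-2)/2})=O(n^{(\beta-2)/4})\le O(n^{-1/4})\) since \(n_0\sim n^{1/2}\) and \(\beta\le1\). Hence \((J_s-J_t)^2\le 2(J_s^2+J_t^2)=O(n^{-1/2})\), absorbed into the slack in the bound.

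For (ii) and (iii), I use that \(\kappa=4\pi^2n_0^2/n=O(1)\). The Lipschitz property of \(\exp(-\cdot)\) on non-negative arguments gives \(|\psi_s(u)-\psi_t(u)|\le\kappa u^2|\sigma^2_s-\sigma^2_t|\), and \(\sigma_t^2\in\iam\) then yields \(\E[(\psi_s-\psi_t)^2\mid\ftp]=O((s-t)^{2\alpha_0})\). For (ii), the telescoping decomposition \(\varphi_s-\varphi_t=(\exp(-c_s(u)u^2)-\exp(-c_t(u)u^2))\psi_s+\exp(-c_t(u)u^2)(\psi_s-\psi_t)\), together with the boundedness \(\psi,\exp(-c(u)u^2)\in[0,1]\), gives \(|\varphi_s(u)-\varphi_t(u)|\le u^2|c_s(u)-c_t(u)|+|\psi_s(u)-\psi_t(u)|\); squaring and combining with (i) and (iii) closes the proof. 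The main obstacle is the case \(\beta\le1\) of (i): without an \(\iam\)-type hypothesis on jump integrals, I must exploit the jump-activity bound via a carefully tuned split at \(|x|=n_0^{-1/2}\) to show \(J_t=O(n^{-1/4})\) directly, absorbing its difference into the \(O(n^{-1/2})\) tolerance.
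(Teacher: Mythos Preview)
Your proof is correct and follows the same strategy as the paper: write \(c_t(u)=c_t+2Z_{c,t}\), treat \(\beta>1\) via the \(\iam\) hypothesis on jump integrals and \(\beta\le1\) by showing the jump part is \(O(n^{-1/4})\), then deduce (ii) and (iii) by Lipschitz continuity. The paper's execution is a bit cleaner in two places: it integrates over \(w\) first to get a single function \(f(x)=\frac{1}{2n_0u^2}\int_0^1(1-\cos(\sqrt{n_0}\Phi(w)ux))\,dw\) with \(0\le f(x)\le 1\wedge x^2\), applying the \(\iam\) assumption once rather than pointwise in \(w\); and for \(\beta\le1\) it uses \(1-\cos(y)\le|y|\) to obtain \(f(x)\le(1\wedge|x|)/(\sqrt{n_0}|u|)\) directly, avoiding your threshold split and also giving (iv) for all \(n\) via \(f\le 1\wedge x^2\le 1\wedge|x|^\beta\) (your argument for (iv) needs \(n_0u^2\) large).
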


\begin{lemma}
  \label{lem:xcf}
  In the setting of \autoref{thm:est}, for \(k = 0, \dots, n_0-1,\)
  and \(u \in \R,\) we have
  \begin{align*}
    \E[\cos(u\hxk) \mid \fk]
    &= \vku + O(n^{-1/4}),\\
    \Var[\cos(u\hxk) \mid \fk]
    &= \rho_{k/n_0}^2(u) + O(n^{-1/4}).
  \end{align*}
\end{lemma}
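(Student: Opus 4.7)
The plan is to write $\hxk = A_k + B_k$ with
\[A_k = \sum_{j, j+1 \in J_k} \aj(X_{(j+1)/n} - X_{j/n}), \qquad B_k = \sum_{j, j+1 \in J_k} \aj(\varepsilon_{j+1} - \varepsilon_j),\]
compute each conditional characteristic function separately, combine them, and take real parts. The variance claim (ii) will then follow from (i) applied with $2u$ via the identity $\cos^2(x) = \tfrac12(1 + \cos(2x))$.

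For the noise part, Abel summation gives $B_k = \sum_j q_j \varepsilon_j + (\text{boundary})$, with $q_j = p_{j-1}-p_j$. The boundary is negligible because $p_{n_k} = \sqrt{n_0}\,\Phi(k) = 0$ at the left endpoint of $J_k$ and $p_j$ is small at the right endpoint, whereas $|q_j| = O(n_0^{3/2}/n)$ in between. A third-order Taylor expansion of $\exp(iuq_j\varepsilon_j)$ combined with $\E[\varepsilon_j\mid\fj] = 0$, $\E[\varepsilon_j^2\mid\fj] = \sigma^2_{j/n}$, and $\E[\varepsilon_j^4\mid\fj] \le D^2$ yields $\E[\exp(iuq_j\varepsilon_j)\mid\fj] = 1 - \tfrac12 u^2 q_j^2 \sigma^2_{j/n} + O(|q_j|^3)$; taking products across $j$ (using conditional independence) and then logs gives $\log \E[\exp(iuB_k)\mid\fk] = -\tfrac12 u^2 \sum_j q_j^2 \sigma^2_{j/n} + O(n^{-1/4})$. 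The smoothness $\sigma^2_t \in \iam$ with $\alpha \ge \tfrac12$ permits replacing $\sigma^2_{j/n}$ by $\ssk$ at cost $O(n^{-1/4})$, and a direct Riemann-sum calculation gives $\sum_j q_j^2 = 2\kappa + O(n^{-1/4})$ (using $\int_0^1 \Phi'(w)^2\,dw = 8\pi^2$ for $\Phi(w) = 2\sin(2\pi w)$ together with $\kappa = 4\pi^2 n_0^2/n$). Hence $\E[\exp(iuB_k)\mid\fk] = \pku + O(n^{-1/4})$.

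For the price part, I would approximate $X$ on $[k/n_0,(k+1)/n_0]$ by a L\'evy process with characteristics frozen at time $k/n_0$. By iterated conditioning and the standard small-time expansion of the characteristic function of a semimartingale increment, $\log \E[\exp(iuA_k)\mid\fk] \approx \sum_j \tfrac{1}{n}\eta_{k/n_0}(u\aj)$, where $\eta_t(v) = ivb_t - \tfrac12 v^2 c_t + \int(e^{ivx}-1-ivx\,1_{\abs{x}<1})\,\nu_t(dx)$ is the frozen L\'evy exponent. Recognising this as a Riemann sum and changing variables $w = n_0 s - k$ yields $\tfrac1{n_0}\int_0^1 \eta_{k/n_0}(u\sqrt{n_0}\Phi(w))\,dw$. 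The drift and the compensator-linear terms vanish since $\int_0^1 \Phi(w)\,dw = 0$; the Gaussian term contributes $-c_{k/n_0}u^2 \cdot \tfrac12\int_0^1 \Phi(w)^2\,dw = -c_{k/n_0}u^2$; the imaginary part of the jump integral vanishes by the antisymmetry $\Phi(1-w) = -\Phi(w)$; and the real part of the jump integral equals exactly $-(c_{k/n_0}(u) - c_{k/n_0})u^2$ by the definition of $c_t(u)$. Thus $\E[\exp(iuA_k)\mid\fk] = \exp(-c_{k/n_0}(u)u^2) + O(n^{-1/4})$, which is real.

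Combining the two, using that $A_k$ and $B_k$ are approximately conditionally independent given $\fk$ (thanks to the measurability structure $\varepsilon_j\in\fjp$ with $\E[\varepsilon_j\mid\fj]=0$), one obtains $\E[\exp(iu\hxk)\mid\fk] = \vku + O(n^{-1/4})$; taking real parts gives (i). For (ii), applying (i) at $2u$ yields $\E[\cos^2(u\hxk)\mid\fk] = \tfrac12(1 + \varphi_{k/n_0}(2u)) + O(n^{-1/4})$, while $\E[\cos(u\hxk)\mid\fk]^2 = \vku^2 + O(n^{-1/4})$ since $|\vku|\le 1$; subtracting yields $\tku + O(n^{-1/4})$, as claimed. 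The hard part will be justifying the L\'evy-freezing step for $A_k$ when the jump activity $\beta$ is close to $2$: infinite-variation small jumps make pure moment estimates too weak, so one must work throughout at the characteristic-function level, making careful use of the smoothness $t \mapsto \int f(x)\,\nu_t(dx) \in \iam$ for $|f(x)|\le 1\wedge x^2$ provided by $\sacm$ in order to bound both the freezing and the Riemann-sum errors by $O(n^{-1/4})$.
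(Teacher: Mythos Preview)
Your overall strategy is close to the paper's, and the separate computations you sketch for the noise and price parts are essentially correct (and match what the paper does via its \autoref{lem:cfs} and \autoref{lem:tech}). The variance argument via $\cos^2(x)=\tfrac12(1+\cos 2x)$ is exactly the paper's.

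The gap is in the combination step. You write that $A_k$ and $B_k$ are ``approximately conditionally independent given $\fk$,'' but this is not a given: the noises $\varepsilon_j$ are $\fjp$-measurable, and the characteristics $(b_t,c_t,\nu_t)$ of $X$ at later times $t>j/n$ are allowed to depend on $\varepsilon_j$. So even after you have computed $\E[\exp(iuA_k)\mid\fk]$ and $\E[\exp(iuB_k)\mid\fk]$ separately, there is no mechanism in your argument that lets you multiply them. The freezing heuristic ``absorbs the dependence into the $O(n^{-1/4})$ error'' is morally right, but making it rigorous amounts to redoing the interleaved conditioning the paper uses, not to any independence statement. A related looseness is your phrase ``taking products across $j$ (using conditional independence)'' for $B_k$: the $\varepsilon_j$ are not assumed conditionally independent given $\fk$ either, only that $\E[\varepsilon_j\mid\fj]=0$ and $\E[\varepsilon_j^2\mid\fj]=\sigma^2_{j/n}$; one must peel them off one at a time via the tower property.

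The paper avoids this entirely by never separating $A_k$ and $B_k$. It writes $\hxk=\sum_{j\in J_k}(\aj\Delta X_j+\bj\varepsilon_j)$ and runs a single backward induction on the partial sums $\widehat X_{k,m}=\sum_{j\ge m}(\aj\Delta X_j+\bj\varepsilon_j)$: at step $m$ it conditions on $\fms$ (inductive hypothesis), then on $\fmp$ to peel off $p_m\Delta X_m$ using \autoref{lem:cfs}(ii), and finally on $\fm$ to peel off $q_m\varepsilon_m$ using \autoref{lem:cfs}(i). Because $\varepsilon_m\in\fmp$, this ordering respects the filtration exactly, and no independence is invoked. The output of the induction is already the product $\exp(-c_{k/n_0}(u)u^2)\cdot\pku$ plus an error that sums the per-step contributions; the symmetry of $\Phi$ about $\tfrac12$ is then used, as you say, to kill the imaginary part.
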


\begin{lemma}
  \label{lem:hssk}
  In the setting of \autoref{thm:est}, for \(k = 0, \dots, n_0-1,\)
  and \(u \in \R,\) we have
  \begin{align*}
    \E[\exp(-\kappa \hssk u^2)\mid\fk]&= \pku + O(n^{-1/4}),\\
    \Var[\exp(-\kappa \hssk u^2)\mid\fk]&= O(n^{-1/4}).
  \end{align*}
\end{lemma}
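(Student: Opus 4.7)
My plan is to prove the mean bound first, then deduce the variance bound as a free consequence. Since $\hssk \ge 0$, the map $x \mapsto e^{-\kappa u^2 x}$ is $\kappa u^2$-Lipschitz on $[0,\infty)$, so
\[\abs{\E[\exp(-\kappa u^2 \hssk) \mid \fk] - \pku} \le \kappa u^2 \,\E\bigl[\abs{\hssk - \ssk}\,\big|\,\fk\bigr] \le \kappa u^2 \,\E[(\hssk - \ssk)^2 \mid \fk]^{1/2}\]
by Cauchy--Schwarz. Since $\kappa = O(1)$, it suffices to show $\E[(\hssk - \ssk)^2 \mid \fk] = O(n^{-1/2})$, which I decompose as bias squared plus conditional variance.

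For the bias, I expand $(Y_{j+1}-Y_j)^2 = (\dxj)^2 + (\ejs-\ej)^2 + 2\,\dxj(\ejs-\ej)$ and handle each piece by tower-of-filtration arguments over $\fj \subset \fjp \subset \fjs$ from \autoref{def:fp}. The leading contribution $\E[(\ejs-\ej)^2 \mid \fk]$ equals $\E[\sigma^2_{(j+1)/n} + \sigma^2_{j/n} \mid \fk]$, the cross term $\E[\ejs\,\ej \mid \fk]$ vanishing because $\ej$ is $\fjp$- (hence $\fjs$-)measurable and $\E[\ejs \mid \fjs] = 0$; using $\sigma_t^2 \in \iam$ from condition (i) of $\sacm$, averaging over $j \in J_k$ and an interval of width $1/n_0 \sim n^{-1/2}$ gives $2\ssk + O(n^{-\alpha_0/2})$. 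The term $(\dxj)^2$ has conditional expectation $O(1/n)$ by the bounds on $b_t$, $c_t$, and $\int_\R(1 \wedge x^2)\,\nu_t(dx) \le D$; and the cross term $\dxj(\ejs-\ej)$ is $O(1/n)$ on combining $\E[\ej \mid \fj]=0$ with the semimartingale decomposition of $X$ with respect to $\fjp$. Multiplying by $n_0/(2n)$ and summing the $|J_k|$ terms, the total bias is $O(n^{-1/4})$.

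For $\Var[\hssk \mid \fk]$, the $(\ejs - \ej)^2$ terms again dominate; their variances are bounded uniformly by the fourth-moment condition $\E[\ej^4 \mid \fj] \le D^2$, and since consecutive summands overlap in only one $\ej$ only nearest-neighbour covariances enter, giving $\Var[\hssk \mid \fk] = (n_0/n)^2 \cdot O(n/n_0) = O(n^{-1/2})$. Combined with the squared bias this establishes the mean bound. For the variance of the exponential, applying the mean bound once more with $u$ replaced by $\sqrt 2 u$ yields $\E[\exp(-2\kappa u^2 \hssk) \mid \fk] = (\pku)^2 + O(n^{-1/4})$, whence
\[\Var[\exp(-\kappa u^2 \hssk) \mid \fk] = (\pku)^2 + O(n^{-1/4}) - \bigl(\pku + O(n^{-1/4})\bigr)^2 = O(n^{-1/4}),\]
using $\pku \in (0,1]$. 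The main technical obstacle is the bias calculation, specifically the cross term $\dxj(\ejs - \ej)$: controlling it cleanly requires both the non-right-continuity of $\ft$ (so that $\ej$ is $\fjp$- but not $\fj$-measurable) and the fact from \autoref{def:fp} that the semimartingale decomposition of $X$ is also valid with respect to $\fjp$.
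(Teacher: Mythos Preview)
Your overall strategy---reduce to an $L^1$ (via $L^2$) bound on $\hssk-\ssk$ through a Lipschitz estimate on $e^{-\kappa u^2 x}$, then get the variance claim by applying the mean bound at $\sqrt2 u$---is exactly the paper's approach (the paper phrases the first step as a derivative bound on the conditional moment generating function, but that is the same computation). The decomposition of $(Y_{j+1}-Y_j)^2$ and the filtration arguments you outline for the $\varepsilon$-terms are also correct.

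There is, however, a real gap in your treatment of the $(\dxj)^2$ contribution. You assert $\E[(\dxj)^2\mid\fk]=O(1/n)$ from the bound $\int_\R(1\wedge x^2)\,\nu_t(dx)\le D$, but that bound only controls $\nu_t(\{|x|\ge 1\})$ and $\int_{|x|<1}x^2\,\nu_t(dx)$; it says nothing about $\int_{|x|\ge 1}x^2\,\nu_t(dx)$, which may be infinite under the assumptions of $\sacm$. Consequently $\dxj$ need not have a finite second moment, and both your bias step (which needs $\E[(\dxj)^2]$) and your variance step (which needs $\E[(\dxj)^4]$) can fail as written. The paper handles this by splitting $X_t=X_{I,t}+X_{J,t}$ into a square-integrable part and a large-jump part, conditioning on the event $E_{\sigma,k}$ that $X_{J,t}$ has no jump in $[k/n_0,(k+1)/n_0)$ (which has $\P(E_{\sigma,k}^c)=O(n^{-1/2})$), and applying \autoref{lem:xm} to the increments of $X_{I,t}$; on $E_{\sigma,k}^c$ one uses only that $\exp(-\kappa u^2\hssk)\le 1$. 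Once you insert this localisation your argument goes through essentially unchanged.
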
  

We are now in a position to describe the behaviour of the estimates
\(\hvlu\) and \(\hplu.\) First, we will define the event \(\el\)
mentioned in the statement of \autoref{thm:est}. We set
\[\phantomsection \label{def:el} \el = \{\hvlu \ge \zeta(u)\} \cap \{\hplu \ge
\zeta(u)\},\]
where the constant \(\zeta(u) = \oh \exp(-(\kappa+3)Du^2).\) We then
have the following result.

\begin{lemma}
  \label{lem:bounds}
  In the setting of \autoref{thm:est}, for \(l = 0, \dots, n_2-1,\) we
  have:
  \begin{enumerate}
    \item \(\E[\hvlu-\vlu\mid\fl] = O(n^{-\alpha_1});\)
    \item \(\E[\hplu-\plu\mid\fl] = O(n^{-\alpha_1});\)
    \item \(\E[(\hvlu-\vlu)^2\mid\fl] = \tlu/n_1 + O(n^{-\alpha_1});\)
    \item \(\E[(\hplu-\plu)^2\mid\fl] = O(n^{-1/4});\)
    \item for \(p = 3, 4,\) \(\E[(\hvlu - \vlu)^p \mid \fl] =
      O(n^{-\alpha_1});\) and
    \item \(\P(\elc \mid\fl)
      \le \exp(-An^{1/8}),\) for a constant \(A > 0.\)
  \end{enumerate}
\end{lemma}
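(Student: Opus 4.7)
My plan is to reduce all six claims to the two per-bin lemmas \autoref{lem:xcf} and \autoref{lem:hssk}, combined with the smoothness of the latent processes \(\varphi_t(u)\) and \(\psi_t(u)\) from \autoref{lem:tech2}. Writing \(f_k = \cos(u\hxk)\) and \(g_k = \exp(-\kappa u^2 \hssk)\), the key step is the orthogonal decomposition
\[
f_k - \vlu = \underbrace{(f_k - \E[f_k\mid\fk])}_{M_k} + \underbrace{(\E[f_k\mid\fk] - \vku)}_{R_k} + \underbrace{(\vku - \vlu)}_{D_k},
\]
and similarly for \(g_k - \plu\). By \autoref{lem:xcf}, \(|R_k| = O(n^{-1/4})\); by \autoref{lem:tech2}(ii), \(\E[D_k^2\mid\fl] = O(n_2^{-2\alpha_0} + n^{-1/2})\); and \(M_k\) is a martingale increment for the filtration \(\mathcal F_{k/n_0}^+\), bounded by \(2\) in absolute value.

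For (i) and (ii), I take conditional expectations: the \(M_k\) terms average to zero, the \(R_k\) terms contribute \(O(n^{-1/4})\), and applying Cauchy--Schwarz and \autoref{lem:tech2}(ii) bounds \(\E[D_k\mid\fl]\) by \(O(n_2^{-\alpha_0} + n^{-1/4}) = O(n^{-\alpha_1})\), since \(n_2 \sim n^{3/8}\). For (iii) and (iv), I expand \((\hvlu - \vlu)^2 = n_1^{-2}\sum_{k,k'}(f_k - \vlu)(f_{k'} - \vlu)\) and exploit orthogonality: the diagonal terms yield \(n_1^{-2}\sum_k \Var(f_k\mid\fk)\), which equals \(\rho^2_{l/n_2}(u)/n_1 + O(n^{-1/4}/n_1)\) after applying \autoref{lem:xcf} and smoothness of \(\rho_t^2(u)\); the cross terms in \(M_k M_{k'}\) vanish by the martingale property applied along \(\mathcal F_{k/n_0}^+\); the mixed and \(D_k\)--type cross terms are controlled by the same smoothness bound applied uniformly in \(k \in K_l\). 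For \(\hplu\), the variance of each \(g_k\) is itself only \(O(n^{-1/4})\) by \autoref{lem:hssk}, so the same computation yields \(O(n^{-1/4})\) as claimed. Claim (v) is the higher-moment analogue: since \(|f_k - \vlu| \le 2\), each of the \(p\)-fold products reduces, after accounting for the martingale orthogonality, to sums whose leading contributions are controlled either by the \(R_k\) bias (giving \(O(n^{-1/4})\) directly) or by powers of the smoothness error, all of which are \(O(n^{-\alpha_1})\).

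For (vi), I first observe that \(\vlu = \exp(-c_{l/n_2}(u)u^2)\plu\) satisfies \(\vlu, \plu \ge 2\zeta(u)\) by \autoref{lem:tech2}(iv) and the bound \(\sigma_t^2 \le D\) from \autoref{def:scm}. Combined with (i)--(ii), this gives \(\E[\hvlu\mid\fl] \ge 3\zeta(u)/2\) and \(\E[\hplu\mid\fl] \ge 3\zeta(u)/2\) for \(n\) sufficiently large. The event \(\elc\) thus implies a deviation of the averaged estimate from its conditional mean by at least \(\zeta(u)/2\). I then apply Azuma's inequality to the martingale part \(n_1^{-1}\sum_k M_k\), whose increments are bounded by \(2/n_1\), and handle the \(D_k\) contribution with an analogous Azuma bound applied at the coarser scale \(\mathcal F_{(k+1)/n_0}^+\); since \(n_1 \sim n^{1/8}\), this produces the required bound \(\exp(-An^{1/8})\). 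The main technical obstacle is the cross-term bookkeeping in (iii)--(v): the \(M_k\) increments are only martingale differences with respect to the \(+\)-filtration, so one has to be careful about which terms live in which \(\sigma\)-algebra, but this is essentially the same issue already resolved in \autoref{lem:xcf} and \autoref{lem:hssk}.
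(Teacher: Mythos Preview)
Your decomposition $f_k - \vlu = M_k + R_k + D_k$ is just an explicit version of the paper's argument: the paper writes $Z_{\delta,k} = f_k - \vlu$ and its two key estimates \eqref{eq:zs} and \eqref{eq:zsv} are precisely bounds on $\E[(R_k+D_k)^2\mid\fl]$ and on the deviation of $\E[Z_{\delta,k}^2\mid\fk]$ from $\tlu$. For (i)--(iv) your plan is essentially identical; for (v) the paper organises the $p$-fold sum by conditioning on $\fk$ for the \emph{largest} index $k$ in each product rather than expanding $(M+R+D)^p$, which keeps the bookkeeping to a handful of cases instead of $3^p$ cross-types, but the substance is the same.

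The one place your sketch goes wrong is (vi). There is no clean ``Azuma bound at the coarser scale'' for $n_1^{-1}\sum_k D_k$: the $D_k = \vku - \vlu$ are $\fk$-predictable, not martingale increments, and Doob-decomposing them just pushes the problem down a level. The paper avoids this entirely by comparing $\hvlu$ not to $\vlu$ (or to $\E[\hvlu\mid\fl]$) but to $\vlub := n_1^{-1}\sum_{k\in K_l}\E[f_k\mid\fk]$. Since $\vku \ge 2\zeta(u)$ holds for \emph{every} $k\in K_l$ by \autoref{lem:tech2}(iv), one gets $\vlub \ge 2\zeta(u) + O(n^{-1/4})$ almost surely, and the remaining difference $\hvlu - \vlub = n_1^{-1}\sum_k M_k$ is a pure bounded martingale to which Azuma applies directly. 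In your notation: the $D_k$'s need no concentration at all, because $\vlu + n_1^{-1}\sum_k D_k = n_1^{-1}\sum_k \vku \ge 2\zeta(u)$ holds pointwise.
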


\begin{proof}
  We first note that
  \[\hvlu - \vlu = \frac1{n_1}\sum_{k\in K_l}Z_{\delta,k},\]
  where the random variables
  \[Z_{\delta,k} = \cos(u\hxk) - \vlu,\quad k \in K_l;\] we will begin
  by proving some facts about the \(Z_{\delta,k}.\)
  We have \(\abs{Z_{\delta,k}} \le 2,\) and
  \begin{align}
    \notag &\E[\E[Z_{\delta,k} \mid \fk]^2\mid\fl]\\
    \notag &\qquad = \E[(\vku-\vlu + O(n^{-1/4}))^2\mid\fl],\\
    \intertext{using \autoref{lem:xcf},}
    \notag &\qquad = O(1)\E[(\vku-\vlu)^2\mid\fl] + O(n^{-1/2})\\
    \label{eq:zs} &\qquad = O(n^{-2\alpha_1}),
  \end{align}
  using \autoref{lem:tech2}(ii).

  We also have
  \begin{align*}
    &\E[Z_{\delta,k}^2\mid\fk]\\
    &\qquad = \Var[Z_{\delta,k}^2\mid\fk] +
    \E[Z_{\delta,k}\mid\fk]^2\\
    &\qquad = \tku + (\vku - \vlu)^2 + O(n^{-1/4}),
  \end{align*}
  using \autoref{lem:xcf}, so
  \begin{align}
    \notag
    &\E[(\E[Z_{\delta,k}^2\mid\fk] - \tlu)^2 \mid \fl]\\
    \notag &\qquad = O(1)\E[(\tku - \tlu)^2 + (\vku
    - \vlu)^2\mid \fl]\\
    \notag &\qquad \qquad + O(n^{-1/2})\\
    \label{eq:zsv} &\qquad = O(n^{-2\alpha_1}).
  \end{align}
  using \autoref{lem:tech2}(ii). We may now prove the claims of the theorem.
  \begin{enumerate}
  \item We have
    \begin{align*}
      \E[\hvlu-\vlu\mid\fl] &= \frac1{n_1}\sum_{k\in\kl}\E[Z_{\delta,k} \mid
\fl]\\
&=
\frac{O(1)}{n_1}\sum_{k\in\kl}\E[\abs{\E[Z_{\delta,k}\mid\fk]}\mid\fl]\\
&=O(n^{-\alpha_1}),
\end{align*}
using \eqref{eq:zs}.
\item The result follows similarly to (i), using
  \autoref{lem:tech2}(iii) and \autoref{lem:hssk}.
\item We have
    \begin{align*}
      \E[Z_{\delta,k}^2\mid\fl]
      &= \tlu + \E[\E[Z_{\delta,k}^2\mid\fk] - \tlu\mid\fl]\\
      &= \tlu + O(n^{-\alpha_1}),
    \end{align*}
    using \eqref{eq:zsv}.
    Likewise, for \(k, k_1 \in K_l,\) \(k > k_1,\) we have
    \begin{align*}
      \E[Z_{\delta,k}Z_{\delta,k_1}\mid\fl]
      &= \E[\E[Z_{\delta,k}\mid\fk]Z_{\delta,k_1}\mid\fl]\\
      &= O(1)\E[\abs{\E[Z_{\delta,k} \mid \fk]}\mid\fl]\\
      &= O(n^{-\alpha_1}),
    \end{align*}
    using \eqref{eq:zs}.
    We deduce that
    \begin{align*}
      &\E[(\hvlu-\vlu)^2 \mid \fl]\\
      &\qquad = \E\left[\frac1{n_1^2} \sum_{k \in K_l}
      Z_{\delta,k}^2 + \frac{2}{n_1^2}\underset{k > k_1}{\sum_{k,k_1 \in K_l,}}
      Z_{\delta,k}Z_{\delta,k_1}\mid\fl\right]\\
      &\qquad = \tlu/n_1 + O(n^{-\alpha_1}).
    \end{align*}
  \item For \(k \in \kl,\) by a similar argument, we have
    \[\E[(\exp(-\kappa \hssk u^2)-\plu)^2 \mid \fl]
      = O(n^{-1/4}),\]
    using \autoref{lem:hssk}. The result follows.
  \item
    We first consider the case \(p = 3.\) For \(k \in K_l,\) we have
    \[\E[Z_{\delta,k}^3 \mid \fl] = O(1),\]
    and for \(k, k_1 \in K_l,\) \(k
    > k_1,\)
    \begin{align*}
      &\E[Z_{\delta,k}^2Z_{\delta,k_1}\mid \fl]\\
      &\qquad = \E[\E[Z_{\delta,k}^2 \mid \fk]Z_{\delta,k_1} \mid \fl]\\
      &\qquad = \tlu\E[Z_{\delta,k_1} \mid \fl]\\
      &\qquad \qquad + O(1)\E[\abs{\E[Z_{\delta,k}^2 \mid \fk] - \tlu}
      \mid \fl]\\
      &\qquad = O(n^{-\alpha_1}),
    \end{align*}
    using \eqref{eq:zs} and \eqref{eq:zsv}.

    Similarly, for \(k, k_1, k_2 \in K_l,\) \(k > k_1, k_2,\) we have
    \begin{align*}
      \E[Z_{\delta,k}Z_{\delta,k_1}Z_{\delta,k_2} \mid \fl]
      &= \E[\E[Z_{\delta,k} \mid \fk]Z_{\delta,k_1}Z_{\delta,k_2}\mid\fl]\\
      &= O(1)\E[\abs{\E[Z_{\delta,k} \mid \fk]}\mid\fl]\\
      &= O(n^{-\alpha_1}),
    \end{align*}
    using \eqref{eq:zs}. We deduce that
    \begin{align*}
      &\E[(\hvlu-\vlu)^3\mid\fl]\\
      &\qquad = \E\left[\left(\frac1{n_1}\sum_{k
            \in \kl}Z_{\delta,k}\right)^3\mid\fl\right]\\
      &\qquad = \frac{O(1)}{n_1^3}\E\Bigg[\sum_{k \in \kl}Z_{\delta,k}^3  + \underset{k > k_1}{\sum_{k, k_1 \in \kl,}}Z_{\delta,k}^2Z_{\delta,k_1}\\
      &\qquad \qquad \qquad \qquad + \underset{k > k_1, k_2}{\sum_{k, k_1,k_2 \in \kl,}}Z_{\delta,k}Z_{\delta,k_1}Z_{\delta,k_2} \mid \fl\Bigg]\\
      &\qquad = O(n^{-\alpha_1}).
    \end{align*}

    For \(p = 4,\) by a similar argument, we have that for
    \(k,k_1,k_2,k_3\in K_l,\) \(k > k_1,k_2,k_3,\)
    \[\E[Z_{\delta,k}^4 \mid
    \fl],\,\E[Z_{\delta,k}^3Z_{\delta,k_1}\mid\fl],\,\E[Z_{\delta,k}^2Z_{\delta,k_1}^2\mid\fl] = O(1),\]
    \[\E[Z_{\delta,k}Z_{\delta,k_1}Z_{\delta,k_2}Z_{\delta,k_3}\mid\fl]
    = O(n^{-\alpha_1}),\]
    and if \(k_1 > k_2,\)
    \[\E[Z_{\delta,k}^2Z_{\delta,k_1}Z_{\delta,k_2}\mid\fl] = O(n^{-\alpha_1}).\]
    We thus obtain that
    \begin{align*}
      &\E[(\hvlu-\vlu)^4\mid\fl]\\
      &\qquad = \E\left[\left(\frac1{n_1}\sum_{k
            \in \kl}Z_{\delta,k}\right)^4\mid\fl\right]\\
      &\qquad = \frac{O(1)}{n_1^4}\E\Bigg[\sum_{k \in
        \kl}Z_{\delta,k}^4
      + \underset{k>k_1}{\sum_{k, k_1 \in \kl,}}Z_{\delta,k}^3Z_{\delta,k_1}\\
      &\qquad \qquad \qquad \qquad + \underset{k>k_1}{\sum_{k, k_1 \in
          \kl,}}Z_{\delta,k}^2Z_{\delta,k_1}^2
      + \underset{k>k_1>k_2}{\sum_{k, k_1,k_2 \in \kl,}}Z_{\delta,k}^2Z_{\delta,k_1}Z_{\delta,k_2}\\
      &\qquad \qquad \qquad \qquad + \underset{k>k_1,k_2,k_3}{\sum_{k, k_1,k_2,k_3 \in \kl,}}Z_{\delta,k}Z_{\delta,k_1}Z_{\delta,k_2}Z_{\delta,k_3} \mid \fl\Bigg]\\
      &\qquad = O(n^{-\alpha_1}).
    \end{align*}
  \item We first note that the quantity
    \begin{align*}
      \vlub &= \frac1{n_1}\sum_{k \in K_l} \E[\cos(u\hxk) \mid
      \fk]\\
      &=\frac1{n_1}\sum_{k\in K_l}\varphi_{k/n_0}(u) + O(n^{-1/4}),\\
      \intertext{using \autoref{lem:xcf},}
      &\ge 2\zeta(u) + O(n^{-1/4}),
    \end{align*}
    using \autoref{lem:tech2}(iv).  Then using Azuma's inequality, we have
    \begin{align*}
      &\P(\hvlu \le \zeta(u) \mid\fl)
      \\
      &\qquad \le \P(
      \hvlu - \vlub \le -\zeta(u) + O(n^{-1/4})\mid\fl)\\
      &\qquad \le \exp(-A'n^{1/8}),
    \end{align*}
    for a constant \(A' > 0.\) By a similar argument, we also have
    \[\P(\hplu \le \zeta(u) \mid\fl) \le \exp(-A'' n^{1/8}),\]
    for a constant \(A'' > 0.\) The result follows. \hfill\qedhere
  \end{enumerate}
\end{proof}

Finally, we may prove \autoref{thm:est}.

\begin{proof}[Proof of \autoref{thm:est}]
  From the definitions, we have that the estimates \(\hclu\) are
  \(\fls\)-measurable, and the events \(\el \in \fls.\) The bound on
  the probability of \(\elc\) likewise follows directly from
  \autoref{lem:bounds}(vi).

  It thus remains to prove the bounds on the mean and variance of
  \(\hclu.\) We will decompose the error in \(\hclu\) into three
  terms, controlling the error in each of \(\log(\hvlu),\)
  \(\log(\hplu),\) and \(\hulu.\)

  We first consider \(\log(\hvlu),\) and define the random variable
  \[Z_{\varphi,l} = \frac{\hvlu}{\vlu} - 1.\]
  We then have that
  \begin{align*}
    &(\log(\hvlu) - \log(\vlu))1(\el)\\ &\qquad = \log(1 + Z_{\varphi,l})1(\el)\\
    &\qquad = (Z_{\varphi,l} - \oh Z_{\varphi,l}^2 + \tfrac13 Z_{\varphi,l}^3 + O(Z_{\varphi,l}^4))1(\el),
  \end{align*}
  using Taylor's theorem, since on \(\el,\)
  \begin{equation}
    \label{eq:zlb}
    1 + Z_{\varphi,l} \ge \frac{\zeta(u)}{\vlu} \ge
    \zeta(u) > 0.
  \end{equation}

  To bound the error in \(\log(\hvlu),\) we will now take expectations
  of the \(Z_{\varphi,l}\) terms. We have that
  \begin{align*}
    \E[Z_{\varphi,l}1(\el) \mid \fl] 
    &= \E\left[\frac{\hvlu}{\vlu} - 1
    \mid \fl\right] + O(\P(\elc\mid\fl)),\\
    \intertext{since \(\hvlu\) is bounded, and \(\vlu \ge 2\zeta(u)
      > 0,\)}
    &= O(n^{-\alpha_1}),
  \end{align*}
  using \autoref{lem:bounds}(i) and (vi).  Similarly, we also have
  \begin{align*}
    \E[Z_{\varphi,l}^21(\el) \mid \fl] &= \ulu + O(n^{-\alpha_1}),\\
    \E[Z_{\varphi,l}^31(\el) \mid \fl] &= O(n^{-\alpha_1}),\\
    \E[Z_{\varphi,l}^41(\el) \mid \fl] &= O(n^{-\alpha_1}),
  \end{align*}
  using \autoref{lem:bounds}(i), (iii), (v) and (vi); as a consequence, we
  deduce
  \begin{align*}
    \E[\abs{Z_{\varphi,l}}^31(\el) \mid \fl] &\le \E[Z_{\varphi,l}^21(\el) \mid \fl]^{1/2}\E[Z_{\varphi,l}^41(\el) \mid
    \fl]^{1/2}\\
    &=O(n^{-\alpha_2}),
  \end{align*}
  using Cauchy-Schwarz.

  We can now bound the error in \(\log(\hvlu).\) We conclude that
\begin{align*}
&\E[(\log(\hvlu)-\log(\vlu))1(\el) \mid \fl]\\
&\qquad= \E[(Z_{\varphi,l} - \oh Z_{\varphi,l}^2 +
\tfrac13 Z_{\varphi,l}^3 + O(Z_{\varphi,l}^4))1(\el) \mid \fl]\\
&\qquad= -\oh\ulu + O(n^{-\alpha_1}),
\end{align*}
and similarly,
\begin{align*}
&\E[(\log(\hvlu)-\log(\vlu))^21(\el)\mid\fl]\\
&\qquad= \E[(Z_{\varphi,l}+O(Z_{\varphi,l}^2))^21(\el) \mid \fl]\\
&\qquad= \E[(Z_{\varphi,l}^2 + O(\abs{Z_{\varphi,l}}^3 + Z_{\varphi,l}^4))1(\el) \mid \fl]\\
&\qquad = \ulu + O(n^{-\alpha_2}).
\end{align*}

We next consider the error in \(\log(\hplu).\) By a similar argument,
we can obtain that
\begin{align*}
\E[(\log(\hplu)-\log(\plu))1(\el) \mid \fl]
&= O(n^{-\alpha_1}),\\
\E[(\log(\hplu)-\log(\plu))^21(\el) \mid \fl]
&= O(n^{-1/4}),
\end{align*}
using \autoref{lem:bounds}(ii), (iv) and (vi).

Finally, we prove bounds on \(\hulu,\) defining the random variable
\[Z_{\tau,l} = \widehat \varphi_l(2u) - \varphi_{l/n_2}(2u).\]
We then have
\begin{align*}
&(\hulu - \ulu)1(\el)\\
&\qquad = \frac1{n_1} \left( \frac{1 + \varphi_{l/n_2}(2u) +
    Z_{\tau,l}}{2\varphi_{l/n_2}^2(u)(1 + Z_{\varphi,l})^2} -\frac{1+\varphi_{l/n_2}(2u)}{2\varphi_{l/n_2}^2(u)}\right)1(\el)\\
&\qquad = 
\frac1{n_1}\left(\frac{-2(1 +
    \varphi_{l/n_2}(2u))Z_{\varphi,l} + Z_{\tau,l}}{2\varphi_{l/n_2}^2(u)}
+
  O(Z_{\varphi,l}^2 + Z_{\varphi,l}Z_{\tau,l})\right)1(\el),
\end{align*}
using \eqref{eq:zlb}, and that \(\vtu\) is bounded below.

Using \autoref{lem:bounds}(i), (iii) and (vi) as above, we also have that 
\begin{align*}
  \E[Z_{\tau,l}1(\el) \mid \fl] &= O(n^{-\alpha_1}),\\
  \E[Z_{\tau,l}^21(\el) \mid \fl] &= O(n^{-1/8}).
\end{align*}
We therefore conclude that
\begin{align*}
&\E[(\hulu-\ulu)1(\el)\mid\fl]\\
&\qquad = O(n^{-1/8})\E[Z_{\varphi,l} + Z_{\tau,l} + Z_{\varphi,l}^2
+ \abs{Z_{\varphi,l}}\abs{Z_{\tau,l}}\mid\fl],\\
\intertext{since \(\vtu\) is bounded below,}
&\qquad =O(n^{-1/4}),
\end{align*}
using Cauchy-Schwarz. Likewise,
\begin{align*}
\E[(\hulu - \ulu)^21(\el)\mid\fl]
&= O(n^{-1/4})\E[Z_{\varphi,l}^2 + Z_{\tau,l}^2\mid\fl]\\
&=O(n^{-3/8}).
\end{align*}

We have thus bounded the error in each of \(\log(\hvlu),\)
\(\log(\hplu),\) and \(\hulu.\) Combining these results, we deduce
that
\begin{align*}
&\E[(\hclu-\clu)1(\el) \mid \fl]\\
&\qquad = O(1)\E[((\log(\hvlu) - \log(\vlu)) + \oh\ulu\\
&\qquad \qquad
- (\log(\hplu) -
\log(\plu)) + \oh(\hulu - \ulu))1(\el) \mid \fl]\\
&\qquad = O(n^{-\alpha_1}),
\end{align*}
and
\begin{align*}
&\E[(\hclu-\clu)^21(\el) \mid \fl]\\
&\qquad = O(1)\E[((\log(\hvlu) - \log(\vlu))^2 + (\log(\hplu) - \log(\plu))^2\\
&\qquad \qquad + (\hulu - \ulu)^2 + O(n^{-1/4}))1(\el) \mid \fl]\\
&\qquad = \ulu + O(n^{-\alpha_2}).
\end{align*}
Finally, it can be checked that these results are uniform over \(l =
0, \dots, n_2-1,\) and \(\P \in \sacm.\)
\end{proof}

\subsection{Proofs of convergence rates}
\label{sec:proofs-con}

We next prove \autoref{thm:reg}, our result on the performance of our
regression estimate \(\tctu.\) Our argument follows from
\citet{tsybakov_introduction_2009}, taking care to account for the
extra error terms in the statement of \autoref{thm:est}, and the
stochastic nature of the target \(\ctu.\)

\begin{proof}[Proof of \autoref{thm:reg}]
  To begin, we will state some facts about local polynomial
  regression, as given in the proof of Theorem 1.7 in
  \citet{tsybakov_introduction_2009}. Since the design points
  \(l/n_2\) are uniform, we have that for large \(n,\) the matrices
  \(V_{n}(t)\) are invertible, and the weight functions \(W_{n,l}(t)\)
  well-defined. Furthermore, the weights \(W_{n,l}(t)\) satisfy:
  \begin{equation}
    \label{eq:wb} \abs{W_{n,l}(t)} =
    O\left(\frac1{n_2h}\right)1\left(\abs*{t-\frac{l}{n_2}}\le h\right),\end{equation}
  uniformly in \(l =0 , \dots, n_2-1;\)
  \begin{equation}
    \label{eq:ws} \sum_{l=0}^{n_2-1}\abs{W_{n,l}(t)} = O(1);
  \end{equation}
  and
  \begin{equation}
    \label{eq:wp} \sum_{l=0}^{n_2-1}\left(t-\frac{l}{n_2}\right)^p
    W_{n,l}(t) = \begin{cases} 1, &p=0,\\ 0, &p = 1, \dots, N-1.\end{cases}
  \end{equation}

  We now prove the results on our estimate \(\tctu.\) We must first
  define the high-probability event \(E\) given in the statement of
  the theorem. We let \(E_{a,b} = \bigcap_{l=a}^{b-1} E_l,\) and set
  \(E = E_{0,n_2}.\) We then note that from \autoref{thm:est}, we have
  \begin{align*}
    \P(E^c\mid\fz) &\le \sum_{l=0}^{n_2-1}\E[\P(\elc\mid\fl)\mid\fz]\\
    &= O(n^{3/8})\exp(-A n^{1/8})\\
    &\le \exp(-A' n^{1/8}),
  \end{align*}
  for constants \(A, A' > 0.\) Similarly, for \(l = 0, \dots,
  n_2-1,\) \(k \ge l,\) we have
  \begin{equation}
    \label{eq:pekc}
    \P(E_{k,n_2}^c\mid\fl) \le \exp(-A'n^{1/8}).
  \end{equation}

  We next split the estimates \(\hclu\) into bias and variance
  parts. Let
  \begin{equation}
    \label{eq:hcs}
    \hclu = \clu + \hcolu + \hctlu,
  \end{equation}
  where the bias term
  \[\hcolu = \frac{\E[(\hclu-\clu)1(E) \mid \fl]}{\P(E \mid \fl)},\]
  setting \(\hcolu = 0\) when \(\P(E \mid \fl) = 0,\) and the variance
  term \(\hctlu\) is then defined by \eqref{eq:hcs}.

  We can similarly split the regression estimates \(\tctu\) into bias
  and variance parts. Let
  \begin{equation}
    \label{eq:tctu}
    \tctu = \ctu + \tcotu + \tcttu + \tchtu,
  \end{equation}
  where the estimator bias and variance, \(\tcotu\) and \(\tcttu,\)
  are given by \[\tcktu = \sum_{l=0}^{n_2-1}W_{n,l}(t)\hcklu, \quad
  k = 1,2,\] and the regression bias
  \[\tchtu = \sum_{l=0}^{n_2-1}W_{n,l}(t)\clu - \ctu.\]

  To bound the error in \(\tctu,\) we must show that all three terms
  \(\tcktu\) are small. We begin with the estimator bias \(\tcotu,\)
  and note that for large \(n,\)
  \begin{align}
    \notag \abs{\hcolu} &= 1(E_{0,l})\frac{\E[(\hclu -
      \clu)1(E_{l,n_2})\mid\fl]}{\P(E_{l,n_2}\mid\fl)}\\
    \notag &= O(1)\E[(\hclu -
      \clu)1(E_{l,n_2})\mid\fl],\\
      \intertext{using \eqref{eq:pekc},}
\notag       &= O(1)(\E[(\hclu -
      \clu)1(\el)\mid\fl] + \P(E_{l+1,n_2}^c\mid\fl)),\\
\intertext{since \((\hclu -
      \clu)1(\el)\) is bounded,}
    \label{eq:hcolu} &=O(n^{-\alpha_1}),
  \end{align}
  using \eqref{eq:pekc} and \autoref{thm:est}. We thus have
  \[\abs{\tcotu} 
  \le \sum_{l=0}^{n_2-1}\abs{W_{n,l}(t)}\abs{\hcolu} =
  O(n^{-\alpha_1}),\] using \eqref{eq:ws} and \eqref{eq:hcolu}.

  We next consider the estimator variance \(\tcttu.\) We first note
  that
  \begin{align*}
    \E[\hctlu 1(E)\mid\fl] &= \E[(\hclu - \clu - \hcolu)1(E)\mid\fl]\\
    &= 0,
  \end{align*}
  and
  \begin{align*}
    \E[\hctlus 1(E)\mid\fl] &= O(1)(
\E[(\hclu - \clu)^21(E)\mid\fl] + \hcolus)\\
    &=
 O(n^{-1/8}),
  \end{align*}
  using \eqref{eq:hcolu} and \autoref{thm:est}. We thus have
  \begin{align*}
    \E[\tcttus 1(E)\mid \fz]
    &= \E\left[\left(\sum_{l=0}^{n_2-1}W_{n,l}(t)\hctlu 1(E)\right)^2\mid \fz\right]\\
    &= \sum_{l=0}^{n_2-1}W_{n,l}^2(t)\E[\hctlus 1(E)\mid\fz]\\
    &= O(n^{-1/8}) \left(\max_{l=0}^{n_2-1}\abs{W_{n,l}(t)}\right)\left(\sum_{l=0}^{n_2-1}\abs{W_{n,l}(t)}\right)\\
    &= O(n^{-2\alpha_3}),
  \end{align*}
  using \eqref{eq:wb} and \eqref{eq:ws}.

  Finally, we bound the regression bias \(\tchtu.\) Let \(m\) denote
  the largest integer smaller than \(\alpha.\) Using Taylor's
  theorem, for \(t \in [0,1],\) and \(l = 0, \dots, n_2-1,\) we then
  have that
  \[\clu = \ctu + \sum_{r=1}^{m-1}
  \frac{(t-l/n_2)^r}{r!}c_t^{(r)}(u) +
  \frac{(t-l/n_2)^m}{m!}c_{t_l}^{(m)}(u),\] for some \(t_l \in [0,1]\)
  lying between \(t\) and \(l/n_2.\) We deduce that
  \begin{align*}
    &\E[\tchtus\mid\fz]\\ &\qquad= \E\left[\left(\sum_{l=0}^{n_2-1}
        W_{n,l}(t)(\clu -\ctu)\right)^2\mid\fz\right],\\
    \intertext{using \eqref{eq:wp},}
    &\qquad = \E\left[\left(\sum_{l=0}^{n_2-1}
        W_{n,l}(t)\frac{(t-l/n_2)^m}{m!}(c_{t_l}^{(m)}(u) -
        c_t^{(m)}(u))\right)^2\mid\fz\right],\\
    \intertext{again using \eqref{eq:wp},}
    &\qquad =
    O(h^{2m})\sum_{k,l=0}^{n_2-1}\abs{W_{n,k}(t)}\abs{W_{n,l}(t)}1\left(\abs*{t
        - \frac{k}{n_2}}, \abs*{t - \frac{l}{n_2}} \le h\right)\\
    &\qquad \qquad \times \E[\abs{c_{t_k}^{(m)}(u) -
      c_t^{(m)}(u)}\abs{c_{t_l}^{(m)}(u) - c_t^{(m)}(u)}\mid\fz],\\
    \intertext{using \eqref{eq:wb},}
    &\qquad =
    O(h^{2\alpha})\left(\sum_{l=0}^{n_2-1}\abs{W_{n,l}(t)}\right)^2,\\
    \intertext{using Cauchy-Schwarz,}
    &\qquad= O(n^{-2\alpha_3}),
  \end{align*}
  using \eqref{eq:ws}.

  Combining these results, we obtain that
  \begin{align*}
    &\E[(\tctu - \ctu)^21(E)\mid\fz]\\
    &\qquad = O(1)\E[(\tcotus + \tcttus + \tchtus)1(E)\mid\fz]\\
    &\qquad = O(n^{-2\alpha_3}),
  \end{align*}
  as required. For the \(L^2\) risk, we likewise obtain
  \begin{align*}
    \E[\norm{\tctu-\ctu}_2^21(E)\mid\fz]
    &= \E\left[\int_0^1 (\tctu-\ctu)^21(E)\,dt\mid\fz\right]\\
    &= \int_0^1 \E[(\tctu-\ctu)^21(E)\mid\fz]\,dt\\
    &= O(n^{-2\alpha_3}).
  \end{align*}
  Finally, we can check that these rates are uniform over \(t \in
  [0,1],\) and \(\P \in \sacm.\)
\end{proof}

We may now deduce our corollary describing the performance of
\(\trtu\) and \(\tctu.\)

\begin{proof}[Proof of \autoref{cor:reg}]
  We first fix \(0 < C \le D,\) and prove bounds on the error of
  \(\tctu\) under the assumption that \(\P \in \sacm.\) For \(t \in
  [0,1],\) we have
  \[\abs{\tctu - \ct} \le \abs{\tctu - \ctu} + \abs{\ctu - \ct},\]
  and from \autoref{thm:reg},
  \[\abs{\tctu - \ctu} = O_p(n^{-\alpha_3}).\]
  
  It thus remains to bound \(\abs{\ctu - \ct}.\) We have that
  \begin{align*}
    \abs{\ctu - c_t} &= \frac{1}{n_0u^2}\int_\R\int_0^1 
    (1-\cos(\sqrt{n_0}\Phi(w)ux))\, dw\,\nu_t(dx)\\
    &= O(n_0^{-1})\int_\R (1 \wedge n_0x^2)\,\nu_t(dx)\\
    &= O(n_0^{-1})\int_\R (1 \wedge (n_0x^2)^{\beta/2}) \,\nu_t(dx)\\
    &= O(n_0^{-(2-\beta)/2})\int_\R (1 \wedge \abs{x}^\beta)\,\nu_t(dx)\\
    &= O(n_0^{-(2-\beta)/2}) = O(n^{-(2-\beta)/4)}).
  \end{align*}
  We thus conclude that
  \begin{equation}
    \label{eq:cer}
    \abs{\tctu - \ct} = O_p(n^{-\alpha_4});
  \end{equation}
  by a similar argument, the same holds also for the \(L^2\) error,
  \(\norm{\tcu-c}_2.\) We can further check that these limits hold
  conditionally on \(\fz,\) and uniformly over all \(t \in [0,1],\)
  \(\P \in \sacm.\)

  We next consider the case that \(\P \in \saecm,\) for some \(\gamma
  \in [0,1].\) Create, on an extended probability space, a process
  \(X^S_t,\) \(t \in [0,1],\) which almost-surely agrees with \(X_t\)
  at times \(t \in [0,S].\) For times \(t \in [S, 1],\) we require that
  \(X_t\) is a L\'evy process with respect to both \(\ft\) and
  \(\ftp,\) with characteristic triplet \((b_S,c_S,\nu_S).\)

  Also create observations
  \[Y^S_j = X^S_{j/n} + \ej^S,\quad j = 0, \dots, n-1,\] where for
  \(j/n \le S,\) the errors \(\ej^S = \ej.\) When \(j/n > S,\) we
  require that the errors \(\ej^S\) are \(\fjp\)-measurable, and equal
  to \(\pm\sigma_S\) each with probability \(\oh\) given
  \(\fj.\)

  Then let \(\tctsu\) denote the estimate of \(c_t\) defined similarly
  to \(\tctu,\) but using the observations \(Y^S_j.\) Conditionally on
  \(\te,\) the law of the \(X_t^S\) and \(Y_j^S\) lies in \(\sacm,\)
  so we can apply \eqref{eq:cer} to \(\tctsu.\) We obtain that
  \begin{equation}
    \label{eq:cer2}
    \abs{\tctsu - c_{t\wt}}1(\te) = O_p(f(C,D)n^{-\alpha_4}),
  \end{equation}
  uniformly in \(\gamma,\,C,\) and \(D,\) for a function \(f(C,D) >
  0.\) 
  
  We now consider the case \(\P \in \sa,\) and suppose we are given an
  arbitrary sequence \(\delta_n >0,\) \(\delta_n\to \infty.\) If we
  choose \(C_n \to 0,\) \(D_n \to \infty\) slowly enough as \(n \to
  \infty,\) we will obtain that \(f(C_n,D_n) = O(\delta_n).\)
  Since \(\P \in \sa,\) we also have that \(\P \in \mathcal
  S^{\alpha,\beta}_{\gamma_n}(C_n, D_n)\) for some \(\gamma_n \to 0;\)
  let \(\ten \in \fz\) and \(S_n \in [0,1]\) denote the associated
  events and stopping times. 

  Applying \eqref{eq:cer2}, we deduce that
  \[\abs{\widetilde c_t^{S_n}(u) - c_{t\wedge S_n}}1(\ten) = O_p(\delta_n n^{-\alpha_4}).\]
  Since \[\P(\ten \cap \{S_n = 1\}) \ge 1 - \gamma_n \to 1,\] this
  implies that
  \[\abs{\tctu - c_t} = O_p(\delta_n n^{-\alpha_4}).\]
  Since this result holds for any diverging sequence \(\delta_n,\) we
  conclude that
  \[\abs{\tctu - c_t} = O_p(n^{-\alpha_4}).\]
  Again, the result for the \(L^2\) error follows similarly.

  Next, we suppose that \(\P \in \tacm,\) and bound the accuracy of
  the estimate \(\trtu.\) We begin by bounding its normalising
  constant,
  \[\frac1{n_2}\sum_{l=0}^{n_2-1}\hclu = \sum_{l=0}^{n_2-1}\widetilde
  W_{n,l}(t)\hclu,\] where the weights \(\widetilde W_{n,l}(t) =
  1/n_2.\) Since these weights satisfy \eqref{eq:wb} and \eqref{eq:ws}
  for the bandwidth \(h = 1,\) we have that
  \begin{align*}
    \abs*{\frac1{n_2}\sum_{l=0}^{n_2-1}(\hclu -
    \clu)} &= \abs*{\sum_{l=0}^{n_2-1}\widetilde W_{n,l}(t)(\hclu -
    \clu)}\\
  &= O_p(n^{-\alpha_1}),
  \end{align*}
  arguing as in \autoref{thm:reg}.

  We also have
  \begin{align*}
    &\E\left[\left(\frac1{n_2}\sum_{l=0}^{n_2-1}\clu - \int_0^1 \ctu\,dt\right)^2\right]\\
    &\qquad= \E\left[\left(\sum_{l=0}^{n_2-1} \int_{l/n_2}^{(l+1)/n_2}
        (\clu
          -  \ctu)\,dt\right)^2\right]\\
    &\qquad\le \E\left[\sum_{l=0}^{n_2-1}
        \int_{l/n_2}^{(l+1)/n_2}(\clu - \ctu)^2\,dt\right],\\
      \intertext{by Jensen's inequality,}
    &\qquad= \sum_{l=0}^{n_2-1}\int_{l/n_2}^{(l+1)/n_2}\E[(\clu - \ctu)^2]
    \,dt\\
    &\qquad= O(n^{-2\alpha_1}).
  \end{align*}

  We thus deduce that
  \begin{equation}
    \label{eq:err-den}
    \abs*{\frac1{n_2}\sum_{l=0}^{n_2-1}\hclu - \int_0^1\ctu\,dt} =
  O_p(n^{-\alpha_1}).
  \end{equation}
  From \autoref{thm:reg}, we also have that
  \begin{equation}
    \label{eq:err-num}\abs{\tctu - \ctu} = O_p(n^{-\alpha_3}).
  \end{equation}
  Combining these results, we obtain that
  \[
    \abs{\trtu - r_t} = \abs*{\frac{\tctu}{\frac1{n_2}\sum_{l=0}^{n_2-1}\hclu} -
      \frac{\ctu}{\int_0^1 \ctu\,dt}} = O_p(n^{-\alpha_3}),\]
  since \(\int_0^1 \ctu\,dt \ge C > 0.\)

  We can again check that this limit holds conditionally on \(\fz,\)
  and uniformly over all \(t \in [0,1],\) \(\P \in \tacm.\)
  Arguing as above, we then conclude that for \(\P \in \ta,\)
  \[\abs{\trtu - r_t} = O_p(n^{-\alpha_3}).\]
  As above, we can also conclude that these results likewise hold for
  the \(L^2\) error \(\norm{\tru - r}_2.\)

  Finally, we bound the performance of \(\trtu\) for \(\P \in
  \sacm.\) Combining~\eqref{eq:cer},~\eqref{eq:err-den}
  and~\eqref{eq:err-num}, we have
  \[\abs*{\frac1{n_2}\sum_{l=0}^{n_2-1}\hclu - \int_0^1c_t\,dt},\,\abs{\tctu - c_t} =
  O_p(n^{-\alpha_4}).\]
  Arguing as above, we obtain that
  \[\abs{\trtu - r_t} = O_p(n^{-\alpha_4}),\]
  and that this can be extended to \(\P \in \sa,\) and the \(L^2\)
  error \(\norm{\tru-r}_2.\)
\end{proof}

Finally, we can also prove our lower bound on the rate of estimation,
which is a simple corollary of results in \citet{munk_lower_2010}.

\begin{proof}[Proof of \autoref{thm:lb}]
  We begin with part (i), and appeal to the proof of Theorem 2.1 in
  \citet{munk_lower_2010}. The authors give a lower bound on the
  \(L^2\) estimation rate of \(c_t,\) in a setting similar to our
  \(\sacm.\)

  \citeauthor{munk_lower_2010} consider a setting where \(\sigma^2_t =
  \sigma^2 > 0\) is a deterministic constant, \(c_t\) is
  deterministic, and \(b_t = \nu_t = 0.\) They then construct a large
  number of choices \(c_{\omega,t}\) for the volatility, separated
  from each other in \(L^2\) norm at a rate at least
  \(n^{-\alpha_3}.\) They further establish that, given observations
  \(Y_j\) under one such volatility function \(c_{\omega,t},\) we
  cannot consistently estimate \(\omega.\) They thus show that no
  estimate \(c_t^*\) of \(c_t\) can satisfy \(\norm{c^* - c}_2 =
  o_p(n^{-\alpha_3}).\)

  It can be checked that, when \(C < 1 < D,\) their models lie in
  \(\sacm \cap \mathcal T\) for large \(n,\) so their lower bound
  holds also in that setting. By rescaling their volatility functions
  \(c_{\omega,t},\) we can obtain the same results also for general
  \(0 < C < D.\)

  A pointwise lower bound can be proved by a similar argument; we
  sketch a proof below. Define two choices for the volatility,
  \[c_{0,t} = 1, \qquad
  c_{1,t} = 1 + h^{\alpha} K((1-t)/h),\] where \(h =
  n^{-1/2(2\alpha+1)},\) and \(K : \R \to \R\) is a smooth
  non-increasing non-negative function, satisfying \(K(0) = 1,\)
  \(K(1) = 0.\)

  We note that when \(C < 1 < D,\) these models lie within \(\sacm\)
  for large \(n;\) as above, by rescaling we can work with general \(0
  < C < D.\) We also have that \(c_{0,1}\) and \(c_{1,1}\) are
  separated at a rate \(n^{-\alpha_3}.\) It thus suffices to show that
  we cannot consistently distinguish \(c_0\) from \(c_1\) given the
  \(Y_j.\)

  We begin by moving to a more informative model, where we
  additionally observe one efficient price \(X_t,\) at a time \(t =
  \lfloor (1-h)n \rfloor / n.\) Given \(X_t,\) the observations
  \(Y_j,\,j \le nt\) are independent of the \(Y_j,\,j > nt;\)
  furthermore, the former are identically distributed under \(c_0\)
  and \(c_1.\)

  We therefore need consider only the observations \(X_t\) and
  \(Y_j,\,j > nt.\) Arguing similarly to \citeauthor{munk_lower_2010},
  it can be shown that these observations are insufficient to
  distinguish \(c_0\) and \(c_1,\) thereby establishing our lower
  bound.

  For part (ii), it can be checked that the rate functions
  \(r_{\omega,t} = c_{\omega,t}/\int_0^1c_{\omega,s}\,ds\) are again
  separated, in \(L^2\) norm or pointwise, at a rate at least
  \(n^{-\alpha_3}.\) We thus conclude that our lower bounds hold also
  for \(r_t.\)
\end{proof}


\subsection{Technical proofs}
\label{sec:pro}

We now give proofs of our technical lemmas.  We begin with a simple
proof of \autoref{lem:ihm}, our result establishing that c\`agl\`ad
It\=o semimartingales with locally bounded characteristics satisfy our
assumptions.

\begin{proof}[Proof of \autoref{lem:ihm}]
  For \(D > 0,\) we define events \(\te\) and stopping times \(S\)
  with the desired properties.  When \(D < 2,\) we may set \(\te =
  \emptyset.\) When \(D \ge 2,\) we set \(\te = \{\abs{Z_0} \le D\},\)
  and
  \begin{multline*}
    S = \sup\Bigg\{s \in [0,1]: \abs{Z_s} \le R,\\\abs{b_{Z,s}} + \int_{1 \le \abs{x} <
        3R}\abs{x}\,\nu_{Z,s}(dx),\,
    c_{Z,s} + \int_{\abs{x} < 3R} x^2\,\nu_{Z,s}(dx) \le D/2
    \Bigg\},
  \end{multline*}
  where \(R \in [0,D]\) is to be determined.  We must then show that
  on the event \(\te,\) \(Z_t \in \ihm,\) with \(\P(\te \cap \{S =
  1\}) \to 1\) as \(D \to \infty.\)

  On \(\te,\) we first note that since \(Z_t\) is c\`agl\`ad, the
  condition \(\abs{Z_{t\wt}} \le D\) follows from the definitions of
  \(R\) and \(S.\) To establish \(Z_t \in \ihm,\) we then split
  \(Z_t\) into a predictable term,
  \[Z_{P,t} = \int_0^{t^-} b_{Z,s}\,ds + \int_0^{t^-}\int_{1 \le \abs{x}
    < 3R} x\,\nu_{Z,s}(dx)\,ds,\]
  martingale term
  \[Z_{M,t} = \int_0^{t^-} \sqrt{c_{Z,s}}\,dB_{Z,s} +
  \int_0^{t^-}\int_{\abs{x} < 3R} x\,(\mu_Z(dx,ds) -
  \nu_{Z,s}(dx)\,ds),\]
  and large-jump term
  \[Z_{J,t} = \int_0^{t^-}\int_{\abs{x} \ge 3R}x\,\mu_Z(dx,ds).\]

  We thus have
  \[Z_t = Z_{P,t} + Z_{M,t} + Z_{J,t}.\] Furthermore, the stopped
  process \[Z_{t\wt} = Z_{P,t\wt} + Z_{M,t\wt},\] since the stopped
  large-jump term \(Z_{J,t\wt} = 0.\) We thus deduce that, for \(0 \le
  t \le s \le 1,\)
  \begin{align*}
    &\E[(Z_{s\wt} - Z_{t\wt})^2\mid\ftp]\\
    &\qquad \le 2\E[(Z_{P,s\wt} - Z_{P,t\wt})^2 + 
      (Z_{M,s\wt} - Z_{M,t\wt})^2 \mid \ftp]\\
      &\qquad \le 2\E\Bigg[\left(\int_{s\wt}^{t\wt}\left(\abs{b_{Z,u}}
            + \int_{1 \le \abs{x} < 3R}
            \abs{x}\,\nu_{Z,u}(dx)\right)\,du\right)^2\\
        &\qquad \qquad + \int_{s\wt}^{t\wt}\left(c_{Z,u} +
          \int_{\abs{x} < 3R} z^2\,\nu_{Z,u}(dx)\right)\,du \mid \ftp\Bigg]\\
      &\qquad \le D^2(s-t)^2/2 + D(s-t)\\
      &\qquad \le D^2(s-t),
  \end{align*}
  using the definition of \(S,\) and that \(D \ge 2.\)

  We conclude that \(Z_t \in \ihm,\) so it remains to show that
  \(\P(\te \cap \{S = 1\}) \to 1\) as \(D \to \infty.\) We first
  consider the event \(\te,\) and note that since \(Z_0\) is finite, we have
  \(\P(\te) \to 1\) as \(D \to \infty.\)

  We next consider the stopping time \(S.\) As the integrals
  \(\int_{\abs{x} < R} x^2\,\nu_{Z,t}(dx)\) are locally bounded, we
  have that if \(R \to \infty\) slowly enough with \(D,\) then
  \[\P\left(\sup_{t \in [0,1]}\int_{\abs{x} < R}x^2 \,\nu_{Z,t}(dx)
    > \frac{D}4\right) \to 0\]
  as \(D \to \infty.\) Since \(b_t\) and \(c_t\) are also locally
  bounded, we likewise have
  \[\P\left(\sup_{t \in [0,1]}\abs{b_t} >
    \frac{D}4\right),\,\P\left(\sup_{t \in [0,1]}c_t >
    \frac{D}4\right) \to 0.\]
  Finally, as \(X_t\) is c\`agl\`ad, it is again locally
  bounded, and
  \[\P\left(\sup_{t \in [0,1]}\abs{X_t} > R\right) \to 0\]
  as \(D \to \infty.\)
  Combining these results, we obtain that
  \(\P(S = 1) \to 1\)
  as \(D \to \infty,\) as required.
\end{proof}

We next establish our technical lemmas \hyperref[lem:tech2]{Lemmas}
\ref{lem:tech2}--\ref{lem:hssk}, used in the proof of
\autoref{thm:est}; we begin with some new definitions. We will write
the characteristic functions of the log-prices \(X_t\) in terms of the
spot characteristic exponent,
\[\phantomsection \label{def:theta} \theta_t(u) = ib_t u - \oh c_t u^2 +
\int_\R \left(e^{iux} - 1 - iux1_{\abs{x} < 1}\right) \nu_t(dx).\]

We will also describe the pre-averaged increments \(\hxk\) using
constants \(\aj, \bj.\) For \(j \in J_k,\) \(k = 0,\dots, n_0-1,\) we
define
\[\phantomsection \label{def:pj} \aj = \begin{cases} \Phi_n(j/n), &j+1 \in J_k,\\ 0,
  &\text{otherwise},\end{cases}\]
and set \(p_{-1} = 0.\) (Note that this does not conflict with
our earlier definition of \(\aj,\) which held only for \(j,j+1\in
J_k.\)) Then for \(j =0, \dots, n-1,\) we define
\[\bj = p_{j-1} - \aj.\]
We may now proceed with the lemmas.

\begin{lemma}
  \label{lem:tech}
  In the setting of \autoref{thm:est}, let \(0 \le t \le s \le 1,\)
  \(u, v \in \R,\) and \(k = 0, \dots, n_0-1.\) We then have:
  \begin{enumerate}
  \item \(\abs{\theta_t(u)} = O(1 + u^2);\)
  \item \(\abs{\theta_t(u) - \theta_t(v)} =
    O(1+(\abs{u}+\abs{v})\abs{u-v});\)
  \item \(\E[\abs{\theta_s(u) - \theta_t(u)}^2\mid\ftp] = O(1 +
    u^2 + u^4(s-t)^{2\alpha_0});\)
  \item \(\sum\tjik \bj^2 = 2\kappa + O(n^{-1/2});\) and
  \item \(\int_{k/n_0}^{(k+1)/n_0} \theta_{k/n_0}(\Phi_n(w)u)\,dw =
    -c_{k/n_0}(u)u^2.\)
  \end{enumerate}
\end{lemma}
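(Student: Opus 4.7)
The plan is to handle (i)--(iii) by bounds on the L\'evy characteristic exponent $\theta_t(u)$, and (iv)--(v) by direct deterministic calculations exploiting the structure of $\Phi(w) = 2\sin(2\pi w)$.

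For (i) and (ii) I would split $\theta_t(u)$ into drift, diffusion and jump contributions, and bound each on $\{t \le S\}$ using the assumptions of $\sacm$: $\abs{b_t}, c_t \le D$, and $\int (1 \wedge \abs{x}^\beta)\,\nu_t(dx) \le D$ with $\beta \le 2$. The jump integral splits at $\abs{x}=1$: on $\abs{x}<1$ I use $\abs{e^{iux} - 1 - iux} \le \tfrac12 u^2 x^2$ combined with $x^2 \le \abs{x}^\beta$, and on $\abs{x}\ge 1$ I use $\abs{e^{iux}-1} \le 2$ against the finite mass $\nu_t(\{\abs{x}\ge 1\}) \le D$. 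For (ii), applying the mean-value theorem to $w \mapsto e^{iwx} - iwx1_{\abs{x}<1}$ (whose derivative $ix(e^{iwx}-1_{\abs{x}<1})$ is bounded by $\abs{w}x^2$ on $\abs{x}<1$ and by $2$ on $\abs{x}\ge 1$) together with the same case split yields the $O(1+(\abs{u}+\abs{v})\abs{u-v})$ bound.

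Part (iii) is where I expect the main obstacle. After splitting $\theta_s - \theta_t$ into drift, diffusion and jump differences, the drift gives $O(u^2)$ (using only boundedness of $b_t$, since no smoothness is assumed) and the diffusion gives $O(u^4(s-t)^{2\alpha_0})$ via $c_t \in \iam$. The jump difference requires case analysis on $\beta$. When $\beta > 1$, the $\sacm$ hypothesis that $\int f(x)\,\nu_t(dx) \in \iam$ for every $\abs{f} \le 1 \wedge x^2$ applies to $f = g_u/(C(1+u^2))$, where $g_u(x) = e^{iux} - 1 - iux 1_{\abs{x}<1}$ obeys $\abs{g_u(x)} \le C(1+u^2)(1 \wedge x^2)$; this yields the required $O((1+u^2)^2(s-t)^{2\alpha_0})$ control. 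When $\beta \le 1$ we do not have this smoothness, but $\int(1 \wedge \abs{x})\,\nu_t(dx) \le D$ forces the entire jump part of $\theta_t(u)$ to satisfy $\abs{\theta^{\mathrm{jump}}_t(u)} = O(1+\abs{u})$, so its difference is $O(1+u^2)$, which fits inside the target bound.

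Part (iv) is a Riemann-sum computation. Reindexing $J_k = \{j_k, \ldots, j_k + n/n_0 - 1\}$ and using periodicity of $\Phi$, one finds $p_{j_k+m} = \sqrt{n_0}\Phi(mh)$ with $h = n_0/n$ (and vanishing boundary values). Taylor expansion gives $q_{j_k+m}^2 = h^2 n_0\Phi'(mh)^2 + O(h^3 n_0)$, and summing over $m$ approximates $(n_0^2/n)\int_0^1\Phi'(w)^2\,dw = 8\pi^2 n_0^2/n = 2\kappa$, with Riemann-sum error $O(n_0^3/n^2) = O(n^{-1/2})$ given the bandwidth scaling. For (v), the antisymmetry $\Phi(1-w) = -\Phi(w)$ implies $\int_0^1\Phi(w)\,dw = 0$, which kills the drift and the small-jump compensator $-ivx 1_{\abs{x}<1}$ contributions after Fubini. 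The diffusion term reduces to $-\tfrac12 c_{k/n_0}u^2\int_0^1\Phi(w)^2\,dw = -c_{k/n_0}u^2$, while the remaining jump piece collapses, again via antisymmetry of $\Phi$, to $-n_0^{-1}\int_0^1\int(1-\cos(\sqrt{n_0}\Phi(w)ux))\,\nu_{k/n_0}(dx)\,dw$, which is exactly $-u^2(c_{k/n_0}(u) - c_{k/n_0})$ by the definition of $c_t(u)$. Adding the two nonzero contributions yields $-c_{k/n_0}(u)u^2$ as required.
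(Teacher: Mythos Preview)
Your proposal is correct and follows essentially the same route as the paper: bound the L\'evy exponent termwise for (i)--(ii), split on $\beta \le 1$ versus $\beta > 1$ for (iii), and use the sinusoidal structure of $\Phi$ for (iv)--(v). One small slip: in (ii), on $\{\abs{x}\ge 1\}$ the derivative $ix e^{iwx}$ has modulus $\abs{x}$, not $2$; the paper (and you should) instead bound $\abs{e^{iux}-e^{ivx}}\le 2$ directly there and integrate against the finite mass $\nu_t(\{\abs{x}\ge 1\})\le D$, reserving the derivative argument for $\{\abs{x}<1\}$.
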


\begin{proof}
  We prove each statement in turn.
  \begin{enumerate}
  \item For \(t \in [0,1],\) \(u, x \in \R,\) we have
    \begin{align}
      \notag\abs{e^{iux} - 1 - iux1_{\abs{x} < 1}} &\le
      \oh\abs{ux}^21_{\abs{x} <
        1} + 2\cdot1_{\abs{x} \ge 1}\\
      \label{eq:f-small} &\le 2(1+u^2)(1 \wedge x^2),
    \end{align}
    so
    \begin{align*}
      \abs{\theta_t(u)} &= O(1 + u^2)\left(\abs{b_t} + \abs{c_t} +
        \int_\R (1 \wedge x^2)\, \nu_t(dx)\right)\text{\hfill}\\
      &= O(1 + u^2).\end{align*}
  \item For \(t \in [0,1],\) \(w \in \R,\) \(\abs{x} < 1,\) we
    likewise have
    \[\abs{ix(e^{iwx}-1)} \le \abs{wx^2},\]
    so for \(u,v \in \R,\)
    \begin{align*}
      \abs{\theta_t(u)-\theta_t(v)}
      & \le \int_u^v \abs*{\frac{d}{dw}\left(\theta_t(w) -
          \int_{\abs{x} \ge 1} (e^{iwx} -1)\,\nu_t(dx)\right)}\,dw \\
      &\qquad+ \int_{\abs{x} \ge 1} \abs{(e^{iux}-1)-(e^{ivx}-1)}\,\nu_t(dx)\\
      &= \int_u^v \abs*{ib_t - wc_t +
        \int_{\abs{x} < 1}  ix(e^{iwx} - 1)\,\nu_t(dx)}\,dw \\
      &\qquad+ \int_{\abs{x} \ge 1} \abs{e^{iux} - e^{ivx}}\,\nu_t(dx)\\
      &=  \int_u^vO(1+\abs{w})\left(\abs{b_t} + \abs{c_t} + \int_{\abs{x} < 1}
        x^2 \,\nu_t(dx)\right)\,dw\\
      &\qquad + O(1)\int_{\abs{x} \ge 1} \nu_t(dx)\\
      &= O(1 + (\abs{u}+\abs{v})\abs{u-v}).
    \end{align*}
  \item For \(u, x \in \R,\) we first set
    \[f(x) = \frac{e^{iux}-1-iux1_{\abs{x}<1}}{2(1+u^2)},\] 
    and for \(t \in [0,1],\)
    \[Z_{\nu,t} = \int_\R f(x)\,\nu_t(dx).\]
    From
    \eqref{eq:f-small}, we have that \(\abs{f(x)} \le (1 \wedge
    x^2),\) so if \(\beta > 1,\) \(Z_{\nu,t} \in \iam,\) and for
    \(0 \le t \le s \le 1,\)
    \[\E[(Z_{\nu,s}-Z_{\nu,t})^2\mid\ftp]  = O((s-t)^{2\alpha_0}).\]

    If \(\beta \le 1,\) we likewise have
    \[
      \abs{f(x)} \le \frac{\abs{ux}1_{\abs{x} < 1} + 1_{\abs{x} \ge
          1}}{1+u^2}
      \le \frac{2(1\wedge \abs{x})}{1 + \abs{u}},\]
    so \(\abs{Z_{\nu,t}} = O(1/(1+\abs{u})),\)
    and
    \[(Z_{\nu,s}-Z_{\nu,t})^2 \le 2(Z_{\nu,s}^2 +
    Z_{\nu,t}^2) = O(1/(1+u^2)).\]

    In either case, since
    \[\theta_t(u) = ib_tu - \oh c_tu^2 + 2(1+u^2)Z_{\nu,t},\]
    we deduce that
    \begin{align*}
      &\E[\abs{\theta_s(u) - \theta_t(u)}^2\mid\ftp]\\
      &\qquad = O(u^2)\E[(b_s-b_t)^2\mid\ftp] + O(u^4)\E[(c_s-c_t)^2\mid\ftp]\\
      &\qquad \qquad +
      O(1+u^4)\E[(Z_{\nu,s} - Z_{\nu,t})^2 \mid \ftp]\\
      &\qquad = O(1 + u^2 + u^4(s-t)^{2\alpha_0}).
    \end{align*}
  \item For \(k = 0,\dots,n_0-1,\) we have
    \begin{align*}
      \sum\tjik \bj^2 &= \sum\tjik\left([\Phi_n(t)]_{(j-1)/n}^{j/n}\right)^2 + O(n^{-1/2}),\\
      \intertext{since for \(j \in J_k,\) \(\bj =
        -[\Phi_n(t)]_{(j-1)/n}^{j/n}\) unless \(j-1\not\in J_k\) or
        \(j+1\not\in J_k,\) in which case both \(\bj\) and
        \([\Phi_n(t)]_{(j-1)/n}^{j/n}\) are \(O(n^{-1/4}),\)} &=
      \sum\tjik\left(\int_{(j-1)/n}^{j/n}
        \Phi_n'(t)\,dt\right)^2 + O(n^{-1/2})\\
      &= n^{-2}\sum\tjik\Phi_n'(j/n)^2 + O(n^{-1/2}),\\
      \intertext{since for \(\abs{s-t} \le 1/n,\) \(\abs{\Phi_n'(s) -
          \Phi_n'(t)} = O(n^{1/4}),\)} &=
      n^{-1}\int_{k/n_0}^{(k+1)/n_0}\Phi_n'(t)^2\,dt + O(n^{-1/2})\\
      &= 2\kappa + O(n^{-1/2}).
    \end{align*}
  \item For \(k = 0, \dots, n_0-1,\) we have
    \begin{align*}
      \int_{k/n_0}^{(k+1)/n_0} \theta_{k/n_0}(\Phi_n(w)u)\,dw
      &= \frac1{n_0}\int_0^1
      \theta_{k/n_0}(\sqrt{n_0}\Phi(w)u)\,dw\\
      &= \Re\left( \frac1{n_0}\int_0^1
      \theta_{k/n_0}(\sqrt{n_0}\Phi(w)u)\,dw\right),\\
      \intertext{since \(\theta_t\) is Hermitian, and \(\Phi\) is
        antisymmetric about \(\oh,\)}
      &= -c_{k/n_0}(u)u^2. \qedhere
    \end{align*}
\end{enumerate}
\end{proof}


We may now prove \autoref{lem:tech2}.

\begin{proof}[Proof of \autoref{lem:tech2}]
  We consider each process \(\xi_t\) in turn, proving (iv) as a
  corollary to (i).
  \begin{enumerate}
  \item For \(u, x \in \R,\) we set
    \[f(x) = \frac{1}{2n_0u^2}\int_0^1 (1 -
    \cos(\sqrt{n_0}\Phi(w)ux))\,dw,\]
    and note that
    \[0 \le f(x) \le \frac{1 \wedge n_0 u^2x^2}{n_0u^2}
    \le 1 \wedge x^2.\]

    If \(\beta > 1,\) the process
    \begin{align*}
      Z_{c,t} &= \int_\R f(x) \,\nu_t(dx)
    \end{align*}
    is thus in \(\iam,\) so for \(0 \le t \le s \le 1,\)
    \[\E[(Z_{c,s}-Z_{c,t})^2\mid\ftp] = O((s-t)^{2\alpha_0}).\]

    If instead \(\beta \le 1,\) we likewise have
    \[0 \le f(x) \le \frac{1 \wedge \sqrt{n_0}\abs{ux}}{n_0u^2}
    \le \frac{1 \wedge \abs{x}}{\sqrt{n_0}\abs{u}},\]
    so \(\abs{Z_{c,s}} = O(n^{-1/4}),\) and
    \[(Z_{c,s}-Z_{c,t})^2 \le 2(Z_{c,s}^2 + Z_{c,t}^2) = O(n^{-1/2}).\]

    In either case, since
    \[c_t(u) = c_t + 2Z_{c,t}(u),\]
    we then have that
    \begin{align*}
      \E[(c_s(u) - c_t(u))^2 \mid \ftp] &= O(1)\E[(c_s-c_t)^2 + (Z_{c,s}-Z_{c,t})^2\mid\ftp]\\
      &=O((s-t)^{2\alpha_0} + n^{-1/2}).
    \end{align*}
    Since
    \[Z_{c,t} \le \int_\R 1 \wedge \abs{x}^\beta\,\nu_t(dx)
    \le D,\] we also have \(c_t(u) \le 3D,\) almost surely.
  \item For \(0 \le t \le s \le 1,\) we have
    \begin{align*}
      &\E[(\varphi_s(u) - \varphi_t(u))^2 \mid \ftp]\\
      &\qquad \le 2\E[(c_s(u) - c_t(u))^2 + (\sigma^2_s(u) - \sigma^2_t(u))^2
      \mid \ftp]\\
      &\qquad = O((s-t)^{2\alpha_0} + n^{-1/2}),
    \end{align*}
    using (i).
  \item The result follows similarly to (ii). \hfill \qedhere
  \end{enumerate}
\end{proof}

We next describe the characteristic functions of the increments of
\(X_t\) and noises \(\ej.\) For \(j = 0, \dots, n-2,\) define the
increments
\[\phantomsection \label{def:dx} \dxj = X_{(j+1)/n} - X_{j/n},\]
and set \(\Delta X_{n-1} = 0.\) We then have the following result.

\begin{lemma}
  \label{lem:cfs}
  In the setting of \autoref{thm:est}, let \(j = 0, \dots, n-1,\) \(u
  \in \R.\) Then
  \begin{enumerate}
  \item if \(u = o(1),\)
    \[\E[\exp(iu\ej)\mid\fj] = \exp(-\oh\sigma^2_{j/n}u^2) +
    O(\abs{u}^3); \quad \text{or}\]
  \item
    if \(j \ne n-1,\) and \(u = O(n^{1/2}),\)
    \begin{align*}
      \E[\exp(iu\dxj)\mid\fjp]
      &= \exp(n^{-1} \theta_{j/n}(u)) +
      O(n^{-1}(1+\abs{u}+u^2n^{-\alpha_0})).
    \end{align*}
  \end{enumerate}
\end{lemma}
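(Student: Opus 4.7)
For part (i), the plan is a direct Taylor expansion. I would write $e^{iu\ej} = 1 + iu\ej - \tfrac12 u^2 \ej^2 + O(|u|^3 |\ej|^3)$ and take conditional expectation given $\fj.$ The zero-mean condition, the identity $\E[\ej^2\mid\fj] = \sigma^2_{j/n},$ and the fourth-moment bound $\E[\ej^4\mid\fj] \le D^2$ (which via Jensen gives $\E[|\ej|^3 \mid \fj] = O(1)$) yield $1 - \tfrac12 \sigma^2_{j/n} u^2 + O(|u|^3).$ Comparing with the expansion $\exp(-\tfrac12 \sigma^2_{j/n} u^2) = 1 - \tfrac12\sigma^2_{j/n} u^2 + O(\sigma^4_{j/n} u^4),$ using $\sigma^2_{j/n} \le D$ and absorbing $u^4$ into $|u|^3$ for $u = o(1),$ closes the argument.

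For part (ii), the strategy is to derive and Gronwall-solve an integral equation for the conditional characteristic function $F(t) := \E[e^{iu(X_t - X_{j/n})} \mid \fjp]$ and compare it against the frozen analogue $G(t) := \exp((t-j/n)\,\theta_{j/n}(u)).$ Applying It\^o's formula to $x \mapsto e^{iux}$ along $X_t$ on $[j/n,(j+1)/n],$ using the decomposition \eqref{eq:sm} with respect to $\ftp,$ the drift $iub_s e^{iuY_{s^-}},$ the quadratic-variation term $-\tfrac12 u^2 c_s e^{iu Y_{s^-}},$ and the sum-of-jumps term (after absorbing the compensator for small jumps) combine into exactly $e^{iu Y_{s^-}} \theta_s(u)\,ds,$ with $Y_t = X_t - X_{j/n}.$ This gives
\[e^{iu(X_t-X_{j/n})} = 1 + \int_{j/n}^t e^{iu(X_{s^-}-X_{j/n})} \theta_s(u)\,ds + M_t,\]
where $M_t$ (with $M_{j/n}=0$) collects the Brownian and compensated-jump stochastic integrals. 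The integrands are bounded in modulus ($|e^{iuY_{s^-}}| = 1,$ $c_s \le D,$ and $|e^{iux}-1|^2 \le 4(1+u^2)(1\wedge x^2)$ together with $\int (1\wedge x^2)\,\nu_s(dx) \le D$ from \autoref{def:scm}(iv), using $\beta \le 2$), so both are genuine $L^2$-martingales and $\E[M_t \mid \fjp] = 0.$ Taking expectations and subtracting the identity $G(t) = 1 + \theta_{j/n}(u)\int_{j/n}^t G(s)\,ds$ produces
\[F(t) - G(t) = \theta_{j/n}(u)\int_{j/n}^t (F(s)-G(s))\,ds + H(t),\]
with $H(t) = \int_{j/n}^t \E[e^{iu(X_{s^-}-X_{j/n})} (\theta_s(u) - \theta_{j/n}(u)) \mid \fjp]\,ds.$

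Cauchy-Schwarz and \autoref{lem:tech}(iii) then bound $|H((j+1)/n)| = O(n^{-1}(1+|u|+u^2 n^{-\alpha_0})),$ matching the target rate. Gronwall applied to the integral inequality introduces the factor $\exp(|\theta_{j/n}(u)|/n);$ crucially, \autoref{lem:tech}(i) gives $|\theta_{j/n}(u)| = O(1+u^2) = O(n)$ under $u = O(n^{1/2}),$ so this factor is $O(1)$ and does not inflate the bound. The main obstacle will be the clean It\^o-formula bookkeeping required to extract exactly $\theta_s(u) e^{iu Y_{s^-}}$ as the drift while verifying that the two stochastic integrals are \emph{true} martingales (not merely local) so that they vanish under conditional expectation; a secondary point is the regime check ensuring the Gronwall factor stays $O(1)$ throughout $u = O(n^{1/2}).$
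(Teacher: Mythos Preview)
Your part (i) is exactly the paper's argument: Taylor to third order, use the moment conditions, and match to the exponential expansion.

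Your part (ii) is correct but takes a genuinely different route from the paper. The paper does not set up an integral equation or invoke Gronwall. Instead it defines $\Theta_t = \int_0^t \theta_s(u)\,ds$ and appeals to Corollary II.2.48 in Jacod--Shiryaev to conclude that $M_{X,t} = \exp(iuX_t)/\exp(\Theta_t)$ is a local $\ftp$-martingale; since $|M_{X,t}|$ is bounded (by $\exp(\int_0^t|\theta_s(u)|\,ds)$, using \autoref{lem:tech}(i) and $u=O(n^{1/2})$), it is a true martingale. Writing
\[
\exp(-n^{-1}\theta_{j/n}(u))\,\E[\exp(iu\dxj)\mid\fjp]
= \E\!\left[\frac{M_{X,(j+1)/n}}{M_{X,j/n}}\exp(Z_{X,j})\,\Big|\,\fjp\right],
\]
with $Z_{X,j}=\int_{j/n}^{(j+1)/n}(\theta_s(u)-\theta_{j/n}(u))\,ds$, the martingale property kills the leading term and the error is $O(1)\,\E[|Z_{X,j}|\mid\fjp]$, which \autoref{lem:tech}(iii) bounds by $O(n^{-1}(1+|u|+u^2 n^{-\alpha_0}))$ exactly as you obtain for your $H$. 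Your It\^o/Gronwall argument is more self-contained (no Jacod--Shiryaev citation) but pays for it with the jump-compensation bookkeeping you flag; the paper's version is shorter because the ``true martingale'' issue reduces to boundedness of $M_{X,t}$, and the Gronwall factor $\exp(|\theta_{j/n}(u)|/n)$ is replaced by the elementary bound on $M_{X,(j+1)/n}/M_{X,j/n}$. Both proofs feed the same error term through \autoref{lem:tech}(iii), so neither gains or loses anything at the level of rates.
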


\begin{proof}
  We prove each result in turn.
  \begin{enumerate}
  \item We note that \(\ej \mid \fj\) has bounded fourth
  moment, so we can expand its characteristic function to third order
  using Taylor's theorem. We obtain that
  \begin{align*}
    &\E[\exp(iu\ej)\mid\fj]\\
    &\qquad = 1 + iu\E[\ej\mid\fj] -
    \hus\E[\ej^2\mid\fj] + O(\abs{u}^3)\E[\abs{\ej}^3 \mid \fj]\\
    &\qquad = 1 - \oh\sigma^2_{j/n}u^2 + O(\abs{u}^3)\\
    &\qquad = \exp(-\oh\sigma^2_{j/n}u^2) + O(\abs{u}^3),
  \end{align*}
  for small enough \(u,\) since \(\sigma^2_{j/n}\) is bounded.
\item We define
  \[\Theta_t = \int_{0}^t \theta_s(u)\,ds,\quad t \in [0,1],\] and note
  from \autoref{lem:tech}(i) that \(\theta_s(u)\) is bounded.  The
  process \(\Theta_t\) is thus bounded, continuous and of finite
  variation. We deduce that its stochastic exponential,
  \[\mathcal E(\Theta)_t = \exp(\Theta_t) \ne 0.\]
  From Corollary II.2.48 in \citet{jacod_limit_2003}, we then have
  that the process
  \[M_{X,t} = \frac{\exp(iuX_t)}{\mathcal E(\Theta)_t}\] is a local
  \(\ftp\)-martingale; since \(M_{X,t}\) is bounded, it is also a true
  martingale.

  We can thus use \(M_{X,t}\) to compute the characteristic functions
  of the increments \(\dxj.\) From \autoref{lem:tech}(i), we have that
  for \(u = O(n^{1/2}),\)
  \[
    \frac{M_{X,(j+1)/n}}{M_{X,j/n}} = \exp\left(iu\dxj -
      \int_{j/n}^{(j+1)/n} \theta_s(u)\,ds\right)
    = O(1),\]
  and similarly, the random variable
  \[Z_{X,j} = \int_{j/n}^{(j+1)/n}(\theta_s(u) -
  \theta_{j/n}(u))\,ds = O(1).\]

  We therefore obtain that
  \begin{align*}
    &\exp(-n^{-1}\theta_{j/n}(u))\E[\exp(iu\dxj) \mid
    \fjp]\\
    &\qquad =
    \E\left[\frac{M_{X,(j+1)/n}}{M_{X,j/n}}\exp(Z_{X,j}) \mid
    \fjp\right]\\
    &\qquad = 1 +
    \E\left[\frac{M_{X,(j+1)/n}}{M_{X,j/n}}(\exp(Z_{X,j})-1) \mid
      \fjp\right],\\
    \intertext{since \(M_{X,t}\) is a martingale,}
    &\qquad = 1 + O(1)\E[\abs{Z_{X,j}}\mid\fjp],\\
    \intertext{since \(M_{X,(j+1)/n}/M_{X,j/n}\) and \(Z_{X,j}\) are bounded,}
    &\qquad = 1 +
    O(1)\E\left[\int_{j/n}^{(j+1)/n}\abs{\theta_s(u)-\theta_{j/n}(u)}\,ds\mid\fjp\right]\\
    &\qquad = 1 + O(1)\int_{j/n}^{(j+1)/n}\E[\abs{\theta_s(u)-\theta_{j/n}(u)}^2\mid\fjp]^{1/2}\,ds\\
    &\qquad = 1 + O(n^{-1}(1 + \abs{u} + u^2n^{-\alpha_0})),
  \end{align*}
  using \autoref{lem:tech}(iii). The result follows since
  \(n^{-1}\theta_{j/n}(u)\) is bounded, using \autoref{lem:tech}(i).
  \hfill\qedhere
\end{enumerate}
\end{proof}

We may now prove \autoref{lem:xcf}.



\begin{proof}[Proof of \autoref{lem:xcf}]
  We first note we may assume \(n\) is large enough that \(J_k\) is
  non-empty. We then express the distribution of the pre-averaged
  increments \(\hxk\) in terms of the increments \(\dxj,\) and noises
  \(\ej.\) From the definitions, we obtain
  \begin{align*}
    \hxk &= \sum_{j\in J_k} \aj (\dxj -\ej + \ejp)\\
    &= \sum_{j\in J_k} (\aj \dxj + \bj \ej).
  \end{align*}

  We now compute the characteristic functions of this sum, conditional
  on \(\fk;\) we begin by writing down the characteristic functions of
  the terms \(\aj\dxj.\)
  For \(j \in J_k,\) set
  \[Z_{\theta,j} = \abs{\theta_{j/n}(\aj u)-\theta_{k/n_0}(\aj u)}.\]
  We then have that for \(j, j + 1 \in J_k,\)
  \begin{align}
    \notag &\E[\exp(iu\aj\dxj)\mid \fjp] \\
    \notag &\qquad = \exp(n^{-1}
    \theta_{j/n}(\aj u))+O(n^{-3/4}),\\
    \intertext{since \(\abs{\aj} = O(n^{1/4}),\) using \autoref{lem:cfs}(ii),}
    \label{eq:adxcf} &\qquad = \exp(n^{-1}\theta_{k/n_0}(\aj u)) +
    O(n^{-3/4} + n^{-1} Z_{\theta,j}),
  \end{align}
  since by \autoref{lem:tech}(i), \(n^{-1}\theta_{t}(\aj u)\) is
  bounded. The result \eqref{eq:adxcf} also holds when \(j \in J_k,\)
  \(j+1 \not \in J_k,\) since then \(\aj = 0.\)

  We can also write down the characteristic functions of the terms
  \(\bj\ej.\) For \(j \in J_k,\) set
  \[Z_{\sigma,j} = \abs{\sigma^2_{j/n}-\sigma^2_{k/n_0}}.\] We then
  have that
  \begin{align}
    \notag &\E[\exp(iu\bj\ej)\mid \fj]\\
    \notag &\qquad = \exp(-\oh \bj^2 \sigma^2_{j/n} u^2) + O(n^{-3/4}),\\
    \intertext{since \(\abs{\bj} = O(n^{-1/4}),\) using
      \autoref{lem:cfs}(i),}
    \label{eq:becf} &\qquad = \exp(-\oh \bj^2 \sigma^2_{k/n_0} u^2) +
    O(n^{-3/4} + n^{-1/2} Z_{\sigma,j}),
  \end{align}
  since \(\oh \sigma^2_{t} u^2\) is bounded.
  
  We may thus compute inductively the characteristic functions of the
  sums
  \[\hxkm = \sum_{j \in J_k\,:\, j \ge m}
  (\aj\dxj + \bj\ej).\]
  By induction on \(m,\) we will show that
  \begin{align}
    \notag &\E[\exp(iu\hxkm)\mid \fm]\\
    \label{eq:cfd} &\qquad = \exp\left(\sum_{j \in J_k\,:\,j \ge m}(n^{-1}\theta_{k/n_0}(\aj u) - \oh\bj^2\sigma^2_{k/n_0}u^2)\right)\\
    \notag &\qquad \qquad +O(1)\sum_{j \in J_k\,:\,j \ge m}
    \E[n^{-3/4} + n^{-1}Z_{\theta,j} + n^{-1/2}Z_{\sigma,j}\mid\fm].
  \end{align}
  
  In the base case, when \(m = \max(J_k) + 1,\) the
  result is trivial. In the inductive case, we assume that \(m \in
  J_k,\) and \eqref{eq:cfd} holds for \(m+1.\) Since
  \begin{align*}
    \E[\exp(iu\hxkm) \mid \fm]
    &= \E[\exp(iu\bm\emm)\E[\exp(iu\am\dxm)\\
    &\qquad \times
    \E[\exp(iu\widehat X_{k,m+1})\mid\fms]\mid \fmp]\mid\fm],
  \end{align*}
  and using \eqref{eq:adxcf} and \eqref{eq:becf}, we have that
  \eqref{eq:cfd} holds also for \(m.\)

  We therefore have that \eqref{eq:cfd} holds when \(m = \min(J_k),\) in
  which case \(\hxkm = \hxk.\) We conclude
  that
  \begin{align*}
    &\E[\cos(u\hxk)\mid\fk]\\
    &\qquad = \Re\left(\E[\exp(iu\hxk)\mid\fk]\right)\\
    &\qquad = \Re\left(\E[\E[\exp(iu\hxkm)\mid\fm]\mid\fk]\right)\\
    &\qquad = \Re\left(\exp\left(\sum_{j \in
        J_k}(n^{-1}\theta_{k/n_0}(\aj u) -
      \oh\bj^2\sigma^2_{k/n_0}u^2)\right)\right)+ O(n^{-1/4}),\\
    \intertext{using \eqref{eq:cfd} and \autoref{lem:tech}(iii),}
    &\qquad = \Re\left(\exp\left(\int_{k/n_0}^{(k+1)/n_0}
      \theta_{k/n_0}(\Phi_n(w)
      u)\,dw - \kappa \sigma^2_{k/n_0}u^2\right)\right) + O(n^{-1/4}),\\
    \intertext{since for \(\abs{s-t} \le 1/n,\) \(\abs{\Phi_n(s) -
        \Phi_n(t)} = O(n^{-1/4}),\) and using \autoref{lem:tech}(i), (ii)
      and (iv),} &\qquad= \vku + O(n^{-1/4}),
  \end{align*}
  using \autoref{lem:tech}(v). As a consequence, we also obtain
  \begin{align*}
    &\Var[\cos(u\hxk) \mid \fk]\\
    &\qquad = \E[\cos(u\hxk)^2 \mid \fk] - \E[\cos(u\hxk) \mid \fk]^2\\
    &\qquad = \oh \E[1 + \cos(2u\hxk) \mid \fk] - \E[\cos(u\hxk)
    \mid
    \fk]^2\\
    &\qquad = \oh(1 + \varphi_{k/n_0}(2u) + O(n^{-1/4})) - (\vku +
    O(n^{-1/4}))^2,\\
    &\qquad = \rho^2_{k/n_0}(u) + O(n^{-1/4}),
  \end{align*}
  since \(\vtu\) is bounded.
\end{proof}

We next move on to our bounds on the noise estimates \(\hssk.\) We
will first need to decompose the log-price process \(X_t\) into two
parts: we set
\[\phantomsection \label{def:xij} X_t = X_{I,t} + X_{J,t},\] where the square-integrable component
\[X_{I,t} = \int_{0}^t b_s \,ds + \int_{0}^t
\sqrt{c_s}\, dB_s + \int_{0}^t \int_{\abs{x} < 1} x\,
(\mu(dx,ds)-\nu_s(dx)\,ds),\]
and the large-jump component
\[X_{J,t} = \int_{0}^t \int_{\abs{x} \ge 1} x\,\mu(dx,ds).\] We begin
by proving a technical result on the variation of
the process \(X_{I,t}.\)

\begin{lemma}
  \label{lem:xm}
  In the setting of \autoref{thm:est}, for \(j = 0, \dots, n-1,\) \(p
  = 2, 4,\) we have
  \[\E\left[(X_{I,(j+1)/n}-X_{I,j/n})^p \mid \fjp\right] = O(n^{-1}).\]
\end{lemma}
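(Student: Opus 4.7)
The plan is to decompose the increment of $X_{I,t}$ into its three constituent parts---drift, Brownian component, and compensated small-jump martingale---and bound the $p$-th conditional moment of each piece separately, using $(a+b+c)^p \le C_p(a^p + b^p + c^p)$. Because our hypotheses $\P \in \sacm$ give almost-sure bounds on $b_t$, $c_t$, and on $\int_{|x|<1}x^2\,\nu_t(dx)$ (the latter since $x^2 \le |x|^\beta$ for $|x|<1$ and $\beta \le 2$, and $\int (1\wedge|x|^\beta)\,\nu_t(dx) \le D$), and since the semimartingale decomposition holds also with respect to $\ftp$, each term admits a clean conditional moment bound on $[j/n,(j+1)/n]$.

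For the drift piece, the a.s.~bound $|b_s|\le D$ immediately gives an increment of size $O(n^{-1})$, so its $p$-th power is $O(n^{-p})$ for $p = 2, 4$. For the Brownian piece $\int_{j/n}^{(j+1)/n}\sqrt{c_s}\,dB_s$, the conditional It\^o isometry (resp.\ Burkholder--Davis--Gundy) gives $\E[\,\cdot\,|\fjp] = O(n^{-1})$ for $p = 2$ and $\le C\E[(\int_{j/n}^{(j+1)/n} c_s\,ds)^2 \mid \fjp] = O(n^{-2})$ for $p=4$. For the compensated small-jump martingale
\[M_j = \int_{j/n}^{(j+1)/n}\!\!\int_{|x|<1} x\,(\mu(dx,ds)-\nu_s(dx)\,ds),\]
the second moment is handled by the standard L\'evy-type isometry: $\E[M_j^2\mid\fjp] = \E[\int_{j/n}^{(j+1)/n}\int_{|x|<1}x^2\,\nu_s(dx)\,ds \mid \fjp] = O(n^{-1})$.

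The main work is the fourth moment of $M_j$. By BDG, this is controlled by $\E\big[\big(\int_{j/n}^{(j+1)/n}\int_{|x|<1}x^2\,\mu(dx,ds)\big)^2 \mid \fjp\big]$; I would split the inner random measure into its compensator and compensated martingale parts. The compensator contributes $(\int_{j/n}^{(j+1)/n}\int_{|x|<1}x^2\,\nu_s(dx)\,ds)^2 = O(n^{-2})$, while the compensated martingale part, by a second application of the isometry, contributes $\int_{j/n}^{(j+1)/n}\int_{|x|<1} x^4\,\nu_s(dx)\,ds = O(n^{-1})$, using $\int_{|x|<1} x^4\,\nu_s(dx) \le \int_{|x|<1}x^2\,\nu_s(dx) \le D$. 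The dominant contribution is therefore $O(n^{-1})$, not $O(n^{-2})$, and this explains why the lemma asserts only $O(n^{-1})$ for $p=4$.

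The main obstacle is precisely this fourth-moment bound on the jump martingale: the naive expectation (from Gaussian intuition) would be $O(n^{-2})$, but under arbitrary jump activity the $x^4$-integral against $\nu_s$ need not vanish as $n\to\infty$, so one has to be content with the weaker rate. Once this point is handled, combining the three pieces via $(a+b+c)^p \le C_p(a^p+b^p+c^p)$ yields $\E[(X_{I,(j+1)/n}-X_{I,j/n})^p\mid\fjp] = O(n^{-1})$ for $p = 2, 4$, as required.
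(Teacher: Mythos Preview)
Your proposal is correct and follows essentially the same route as the paper. The only cosmetic difference is that the paper bundles the Brownian and compensated small-jump parts into a single martingale \(M_{I,t}=X_{I,t}-\int_0^t b_s\,ds\) and, for \(p=4\), applies BDG to \(M_{I}\) and then controls \(\E[([M_I]_{(j+1)/n}-[M_I]_{j/n})^2\mid\fjp]\) by introducing the auxiliary martingale \(M_{V,t}=[M_I]_t-\E[[M_I]_t]\), whose quadratic variation is \(\int\!\int_{\abs{x}<1}x^4\,\mu(dx,ds)\); this is exactly your ``split the quadratic variation into compensator plus compensated part and use the isometry on the latter'' step, just packaged differently.
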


\begin{proof}
  Our argument follows \citet{luschgy_moment_2008}. We define the
  \(\ftp-\)martingale
  \[M_{I,t} = X_{I,t} - \int_0^tb_s\,ds,\] and note that
  \begin{align*}
    &\E\left[(X_{I,(j+1)/n}-X_{I,j/n})^p\mid\fjp\right]\\
    &\qquad = O(1)
    \E\left[(M_{I,(j+1)/n}-M_{I,j/n})^p +
      \left(\int_{j/n}^{(j+1)/n}\abs{b_s}\,ds\right)^p\mid\fjp\right]\\
    &\qquad = O(1)\E\left[(M_{I,(j+1)/n}-M_{I,j/n})^p\mid\fjp\right] + O(n^{-p}),
  \end{align*}
  so it suffices to prove an equivalent bound for \(M_{I,t}.\)

  If \(p = 2,\) we note that
  \begin{align}
    \notag &\E[(M_{I,(j+1)/n}-M_{I,j/n})^2\mid\fjp]\\
    \notag &\qquad = \E[[M_I]_{(j+1)/n}-[M_I]_{j/n}\mid\fjp]\\
    \notag &\qquad = \E\left[\int_{j/n}^{(j+1)/n} c_s\,ds + \int_{j/n}^{(j+1)/n}\int_{\abs{x} < 1} x^2 \,\mu(dx,
      ds)\mid\fjp\right]\\
    \notag &\qquad =\int_{j/n}^{(j+1)/n}\E\left[c_s + \int_{\abs{x} < 1} x^2
      \,\nu_s(dx)\mid\fjp\right] ds\\
    \label{eq:miv} &\qquad =O(n^{-1}),
  \end{align}
  as required.

  If instead \(p = 4,\) then since the quadratic variation \([M_I]_t\)
  is integrable, we may define the martingale
  \begin{equation}
    \label{eq:mqd}
    M_{V,t} = [M_I]_t - \E[[M_I]_t].
  \end{equation}
  We then note that
  \begin{align}
    \notag &\E[(M_{V,(j+1)/n} - M_{V,j/n})^2 \mid \fjp]\\
    \notag &\qquad = \E[[M_V]_{(j+1)/n}-[M_V]_{j/n}\mid\fjp]\\
    \notag &\qquad = \E\left[\int_{j/n}^{(j+1)/n}\int_{\abs{x} < 1}
      x^4\,\mu(dx,ds)\mid\fjp\right],\\
    \intertext{since \([M_V]_t\) depends only on the jumps in \(X_{I,t},\)}
    \notag &\qquad = \int_{j/n}^{(j+1)/n}\E\left[\int_{\abs{x} < 1}
      x^4\,\nu_s(dx)\mid\fjp\right] ds\\
    \notag &\qquad \le \int_{j/n}^{(j+1)/n}\E\left[\int_{\abs{x} < 1}
      x^2\,\nu_s(dx)\mid\fjp\right] ds\\
    \label{eq:mqv} &\qquad = O(n^{-1}).
  \end{align}

  We thus have that
  \begin{align*}
    &\E[(M_{I,(j+1)/n}-M_{I,j/n})^4\mid\fjp]\\
    &\qquad = O(1)\E[([M_I]_{(j+1)/n}-[M_I]_{j/n})^2\mid\fjp],\\
    \intertext{using the Burkholder-Davis-Grundy inequality,}
    &\qquad = O(1)\E[(M_{V,(j+1)/n} - M_{V,j/n} +
    O(n^{-1}))^2\mid\fjp]\\
    \intertext{using \eqref{eq:miv} and \eqref{eq:mqd},}
    &\qquad = O(1)\E[(M_{V,(j+1)/n}-M_{V,j/n})^2\mid\fjp] + O(n^{-2})\\
    &\qquad = O(n^{-1}),
  \end{align*}
  using \eqref{eq:mqv}.
\end{proof}

We may now prove \autoref{lem:hssk}.



\begin{proof}[Proof of \autoref{lem:hssk}]
  We first denote by \(E_{\sigma,k}\) the event that \(X_{J,t},\) the
  large-jump component of the log-price, is constant over the interval
  \([k,k+1)/n_0.\) Since the expected number of jumps in \(X_{J,t}\)
  over that interval is
  \[\int_{k/n_0}^{(k+1)/n_0} \int_{\abs{x} \ge 1} \,\nu_s(dx)\,ds
  = O(n^{-1/2}),\]
  we have that \(E_{\sigma,k}\) holds with high probability,
  \[\P(E_{\sigma,k}^c) = O(n^{-1/2}).\]

  On the event \(E_{\sigma,k},\) we have that \(X_{J,t}\) makes no
  contribution to our estimate \(\hssk.\) In this case, \(\hssk\) will
  be equal to
  \[\tssk = \frac{n_0}{2n}\sum_{j,j+1\in J_k}(\nj + \xij)^2,\]
  where the random variables
  \[\nj = \ejs - \ej, \qquad \xij = X_{I,(j+1)/n} - X_{I,j/n}.\]
  
  Since \(E_{\sigma,k}\) holds with high probability, we may therefore
  proceed by bounding \(\tssk.\) We set
  \[S_k = \tssk - \ssk,\] and then have
  \[S_k = S_{k,0} + S_{k,1} + S_{k,2} + S_{k,3} + O(n^{-1/2}),\]
  for the sums
  \begin{align*}
    S_{k,0} &= \frac{n_0}{2n}\sum\tjmik (\sigma^2_{j/n} + \sigma^2_{(j+1)/n} - 2\sigma^2_{k/n_0}),\\
    S_{k,1} &= \frac{n_0}{2n}\sum\tjmik (\nj^2 - \sigma^2_{j/n} - \sigma^2_{(j+1)/n}),\\
    S_{k,2} &= \frac{n_0}{n}\sum\tjmik \nj\xij,\\
    S_{k,3} &= \frac{n_0}{2n}\sum\tjmik \xij^2,
  \end{align*}
  which we will bound in turn.

  To bound \(S_{k,0},\) we note that if \(j,j+1\in J_k,\) we have
  \begin{align*}
    &\E[(\sigma^2_{j/n} + \sigma^2_{(j+1)/n} - 2\sigma^2_{k/n_0})^2 \mid
    \fk]\\
    &\qquad = O(1)\E[(\sigma^2_{j/n}-\sigma^2_{k/n_0})^2 + (\sigma^2_{(j+1)/n} - \sigma^2_{k/n_0})^2 \mid
    \fk]\\
    &\qquad = O(n^{-1/2}),
  \end{align*}
  so \(\E[S_{k,0}^2 \mid \fk] = O(n^{-1/2}).\) Similarly, to bound
  \(S_{k,1},\) we note that if also \(j_1, j_1+1 \in J_k,\) we have
  \begin{align*}
    &\E[(\nj^2 - \sigma^2_{j/n} - \sigma^2_{(j+1)/n})(Z_{\varepsilon,j_1}^2 -
    \sigma^2_{j_1/n} - \sigma^2_{(j_1+1)/n}) \mid \fk]\\
    &\qquad = \begin{cases}
      O(1), &\abs{j - j_1} \le 1,\\
      0, &\text{otherwise}.\end{cases}
  \end{align*}
  so
  \(\E[S_{k,1}^2 \mid \fk] = O(n^{-1/2}).\)

To bound \(S_{k,2},\) we note that
\begin{align*}
  \E[\nj^2\xij^2 \mid \fk] &= \E[\ej^2\E[\xij^2 \mid \fjp] + \xij^2\E[\ejs^2 \mid \fjs]
 \mid \fk]\\
&= O(n^{-1}),
\end{align*}
using \autoref{lem:xm}, so \(\E[S_{k,2}^2 \mid \fk] = O(n^{-1}).\)
For \(S_{k,3},\) we likewise have
\[\E[\xij^4 \mid \fk] = O(n^{-1}),\]
using \autoref{lem:xm}, so \(\E[S_{k,3}^2\mid\fk] = O(n^{-1}).\)
From the above, we may deduce that
\begin{align*}
  \E[S_k^2 \mid \fk] &= O(1)\E[S_{k,0}^2 + S_{k,1}^2 + S_{k,2}^2 +
S_{k,3}^2 \mid \fk] + O(n^{-1})\\
&= O(n^{-1/2}),
\end{align*}
and thus \(\E[\abs{S_k} \mid \fk] = O(n^{-1/4}).\)

We have thus controlled the deviation of \(S_k;\) it remains to
compute its moment generating function,
\[f(v) = \E[\exp(-vS_k)\mid\fk].\] Since \(S_k \ge -\ssk,\) for \(v
\ge 0\) we may take derivatives under the expectation, obtaining that
\begin{align*}
  \abs{f'(v)} &= \abs{\E[S_k\exp(-vS_k)\mid\fk]}\\
  &\le
\exp(v\ssk)\E[\abs{S_k}\mid\fk]\\
&= O(n^{-1/4}),
\end{align*}
uniformly over \(v \in [0,u],\) for fixed \(u \ge 0.\)

From Taylor's theorem, we then have that for some \(v \in [0, u],\)
\[f(u) = f(0) + uf'(v) =1 + O(n^{-1/4}).\]
We thus deduce that
\begin{align*}
&\E[\exp(-u(\hssk-\ssk))\mid\fk]\\
&\qquad= \E[\exp(-u(\hssk-\ssk))1(E_{\sigma,k})\mid\fk] +
O(n^{-1/2}),\\
\intertext{since \(\hssk \ge 0,\) and \(\sigma^2_t\) is bounded,}
&\qquad= \E[\exp(-uS_k)1(E_{\sigma,k})\mid\fk] + O(n^{-1/2})\\
&\qquad=f(u) + O(n^{-1/2}),\\
\intertext{since \(S_k\) is bounded below,}
&\qquad=1 + O(n^{-1/4}).
\end{align*}

We have thus computed the moment generating function of
\(\hssk-\ssk;\) we may now deduce our results.  Since \(\ptu\) is
bounded, we conclude that
\begin{align*}
\E[\exp(-\kappa \hssk u^2)\mid\fk] &= \pku + O(n^{-1/4}),
\end{align*}
and
\begin{align*}
  &\Var[\exp(-\kappa \hssk u^2)\mid\fk]\\
  &\qquad = \E[\exp(-2\kappa \hssk u^2)\mid\fk] -
  \E[\exp(-\kappa \hssk u^2)\mid\fk]^2\\
  &\qquad = \pksu(1 + O(n^{-1/4}) - 1 +
  O(n^{-1/4}))\\ 
  &\qquad = O(n^{-1/4}),
\end{align*}
as required.
\end{proof}

\section*{Notations}
\label{sec:not}

\Needspace{1cm}

\newcommand{\e}[3]{\(#1\) & #2 \unskip \dotfill \pageref{#3}\\}
\newcommand{\h}[1]{\multicolumn{2}{c}{\bf #1}\\\\}
\begin{center}
\begin{longtabu} to \textwidth {cX}
\mainheader
\e{X_t}{efficient price process}{eq:sm}
\e{b_t,c_t,\nu_t}{semimartingale characteristics of
  \(X_t\)}{eq:sm}
\e{B_t}{Brownian motion in \(X_t\)}{eq:sm}
\e{\mu(dx, dt)}{Poisson random measure in \(X_t\)}{eq:sm}
\e{\ej}{microstructure noises}{eq:ms}
\e{Y_j}{observed prices}{eq:ms}
\e{n}{number of observations}{eq:ms}
\e{L_t}{L\'evy process in time-changed model}{eq:tc}
\e{b,c,\nu}{L\'evy characteristics of \(L_t\)}{eq:tc}
\e{R_t}{time-change process}{eq:tc}
\e{r_t}{normalised volatility}{eq:def-r}
\e{n_0,n_1,n_2}{numbers of bins used in estimates}{def:n}
\e{h_1,h_2}{bandwidths in \(n_1, n_2\)}{def:n}
\e{\Phi,\Phi_n}{scaling functions for pre-averaging}{def:phi}
\e{\hxk}{pre-averaged increments}{def:hx}
\e{J_k}{index sets for pre-averaging}{def:hx}
\e{\hssk}{estimated noise variance}{def:hs}
\e{\hvlu}{estimated characteristic functions}{def:hv}
\e{K_l}{index sets for volatility estimation}{def:hv}
\e{\hplu}{estimated noise characteristic function}{def:hp}
\e{\kappa}{constant in noise characteristic function}{def:hp}
\e{\hclu}{estimated volatilities}{def:hc}
\e{\hulu}{bias-correction term in \(\hclu\)}{def:ht}
\e{\ctu}{adjusted volatility process}{def:cu}
\e{\tctu}{local-polynomial estimate of \(c_t\)}{def:lp}
\e{K,N,h}{parameters in \(\tctu\)}{def:lp}
\e{W_{n,l}(t)}{weights in \(\tctu\)}{def:lpw}
\e{\trtu}{local-polynomial estimate of \(r_t\)}{def:hr}
\e{T}{time horizon}{def:time}
\e{\ft,\ftp}{filtrations of \(X_t\) before and after noise}{def:fp}
\e{\mathcal S}{base class of probability measures \(\P\)}{def:ss}
\e{\iam}{class of \(\alpha\)-smooth processes}{def:iam}
\e{\alpha_0}{Lipschitz smoothness rate}{def:iam}
\e{\saecm}{class of semimartingale models}{def:scm}
\e{\Omega_0, S}{probable event and stopping time in \(\saecm\)}{def:scm}
\e{\sacm,\sa}{semimartingale models given by \(\saecm\)}{def:sc}
\e{\tacm,\ta}{classes of time-changed L\'evy models}{def:ta}
\e{\vtu,\ptu}{asymptotic means of \(\hvlu,\hplu\)}{def:vt}
\e{\ttu,\utu}{asymptotic variances of \(\hvlu,\hclu\)}{def:tt}
\e{\alpha_1,\alpha_2}{convergence rates in mean and variance of \(\hclu\)}{def:ao}
\e{\alpha_3}{convergence rate for \(\tctu\) estimating \(\ctu\)}{def:at}
\e{\alpha_4}{convergence rate for \(\tctu\) estimating \(c_t\)}{def:af}
\e{E_l, \zeta(u)}{probable events and constant in \autoref{thm:est}}{def:el}
\suppheader
\e{\theta_t}{spot characteristic function of \(X_t\)}{def:theta}
\e{p_j,q_j}{weights for pre-averaging, and their
  differences}{def:pj}
\e{\dxj}{increments of \(X_t\)}{def:dx}
\e{X_{I,t},X_{J,t}}{integrable and large-jump components of
  \(X_t\)}{def:xij}
\end{longtabu}
\end{center}


\bibliographystyle{abbrvnat}
{\footnotesize \bibliography{tcnlm}}

\begin{thebibliography}{42}
\providecommand{\natexlab}[1]{#1}
\providecommand{\url}[1]{\texttt{#1}}
\expandafter\ifx\csname urlstyle\endcsname\relax
  \providecommand{\doi}[1]{doi: #1}\else
  \providecommand{\doi}{doi: \begingroup \urlstyle{rm}\Url}\fi

\bibitem[A{\"{\i}}t{-Sahalia} and Jacod(2009)]{ait-sahalia_estimating_2009}
A{\"{\i}}t{-Sahalia}  Y and Jacod  J.
\newblock Estimating the degree of activity of jumps in high frequency data.
\newblock \emph{Ann. Statist.}, 37\penalty0 (5A):\penalty0 2202--2244, 2009.

\bibitem[Andersen et~al.(2000)Andersen, Bollerslev, Diebold, and
  Labys]{andersen_great_2000}
Andersen  T, Bollerslev  T, Diebold  F, and Labys  P.
\newblock Great realizations.
\newblock \emph{Risk}, 13:\penalty0 105--108, 2000.

\bibitem[Barndorff{-Nielsen} and Shephard(2004)]{barndorff-nielsen_power_2004}
Barndorff{-Nielsen}  O E and Shephard  N.
\newblock Power and bipower variation with stochastic volatility and jumps.
\newblock \emph{J. Financ. Econometrics}, 2\penalty0 (1):\penalty0 1--37, 2004.

\bibitem[Barndorff{-Nielsen} et~al.(2008)Barndorff{-Nielsen}, Hansen, Lunde,
  and Shephard]{barndorff-nielsen_designing_2008}
Barndorff{-Nielsen}  O E, Hansen  P R, Lunde  A, and Shephard  N.
\newblock Designing realized kernels to measure the ex post variation of equity
  prices in the presence of noise.
\newblock \emph{Econometrica}, 76\penalty0 (6):\penalty0 1481--1536, 2008.

\bibitem[Belomestny(2011)]{belomestny_statistical_2011}
Belomestny  D.
\newblock Statistical inference for time-changed {L}\'evy processes via
  composite characteristic function estimation.
\newblock \emph{Ann. Statist.}, 39\penalty0 (4):\penalty0 2205--2242, 2011.

\bibitem[Belomestny and Panov(2013)]{belomestny_estimation_2012}
Belomestny  D and Panov  V.
\newblock Estimation of the activity of jumps for time-changed {L}{\'e}vy
  processes.
\newblock \emph{Electron. J. Stat.}, 7:\penalty0 2970--3003, 2013.

\bibitem[Carr and Wu(2004)]{carr_time-changed_2004}
Carr  P and Wu  L.
\newblock Time-changed {L}{\'e}vy processes and option pricing.
\newblock \emph{J. Financ. Econ.}, 71\penalty0 (1):\penalty0 113--141, 2004.

\bibitem[Cont and Tankov(2004)]{cont_financial_2004}
Cont  R and Tankov  P.
\newblock \emph{Financial modelling with jump processes}.
\newblock Chapman \& Hall/CRC Financial Mathematics Series. Chapman \&
  Hall/CRC, Boca Raton, FL, 2004.

\bibitem[Delbaen and Schachermayer(1994)]{delbaen_general_1994}
Delbaen  F and Schachermayer  W.
\newblock A general version of the fundamental theorem of asset pricing.
\newblock \emph{Math. Ann.}, 300\penalty0 (3):\penalty0 463--520, 1994.

\bibitem[Fan and Wang(2007)]{fan_multi-scale_2007}
Fan  J and Wang  Y.
\newblock Multi-scale jump and volatility analysis for high-frequency financial
  data.
\newblock \emph{J. Amer. Statist. Assoc.}, 102\penalty0 (480):\penalty0
  1349--1362, 2007.

\bibitem[Figueroa{-L\'opez}(2009)]{figueroa-lopez_nonparametric_2009}
Figueroa{-L\'opez}  J E.
\newblock Nonparametric estimation of time-changed {L}\'evy models under
  high-frequency data.
\newblock \emph{Adv. in Appl. Probab.}, 41\penalty0 (4):\penalty0 1161--1188,
  2009.

\bibitem[Figueroa{-L\'opez}(2011)]{figueroa-lopez_central_2011}
Figueroa{-L\'opez}  J E.
\newblock Central limit theorems for the non-parametric estimation of
  time-changed {L}\'evy models.
\newblock \emph{Scand. J. Stat.}, 38\penalty0 (4):\penalty0 748--765, 2011.

\bibitem[Figueroa{-L\'opez}(2012)]{figueroa-lopez_statistical_2012}
Figueroa{-L\'opez}  J E.
\newblock Statistical estimation of {L}\'evy-type stochastic volatility models.
\newblock \emph{Ann. Finance}, 8\penalty0 (2-3):\penalty0 309--335, 2012.

\bibitem[Golub et~al.(1979)Golub, Heath, and Wahba]{golub_generalized_1979}
Golub  G H, Heath  M, and Wahba  G.
\newblock Generalized cross-validation as a method for choosing a good ridge
  parameter.
\newblock \emph{Technometrics}, 21\penalty0 (2):\penalty0 215--223, 1979.

\bibitem[Hansen and Lunde(2006)]{hansen_realized_2006}
Hansen  P R and Lunde  A.
\newblock Realized variance and market microstructure noise.
\newblock \emph{J. Bus. Econom. Statist.}, 24\penalty0 (2):\penalty0 127--218,
  2006.

\bibitem[Hautsch and Podolskij(2013)]{hautsch_pre-averaging_2012}
Hautsch  N and Podolskij  M.
\newblock Preaveraging-based estimation of quadratic variation in the presence
  of noise and jumps: theory, implementation, and empirical evidence.
\newblock \emph{J. Bus. Econom. Statist.}, 31\penalty0 (2):\penalty0 165--183,
  2013.

\bibitem[Hoffmann et~al.(2012)Hoffmann, Munk, and
  Schmidt{-Hieber}]{hoffmann_adaptive_2010}
Hoffmann  M, Munk  A, and Schmidt{-Hieber}  J.
\newblock Adaptive wavelet estimation of the diffusion coefficient under
  additive error measurements.
\newblock \emph{Ann. Inst. Henri Poincar\'e Probab. Stat.}, 48\penalty0
  (4):\penalty0 1186--1216, 2012.

\bibitem[Jacod and Rei{\ss}(2014)]{jacod_remark_2012}
Jacod  J and Rei{\ss}  M.
\newblock A remark on the rates of convergence for integrated volatility
  estimation in the presence of jumps.
\newblock \emph{Ann. Statist.}, to appear, 2014.

\bibitem[Jacod and Shiryaev(2003)]{jacod_limit_2003}
Jacod  J and Shiryaev  A N.
\newblock \emph{Limit theorems for stochastic processes}, volume 288 of
  \emph{Grundlehren der Mathematischen Wissenschaften [Fundamental Principles
  of Mathematical Sciences]}.
\newblock Springer-Verlag, Berlin, second edition, 2003.

\bibitem[Jacod and Todorov(2014)]{jacod_efficient_2014}
Jacod  J and Todorov  V.
\newblock Efficient estimation of integrated volatility in presence of infinite
  variation jumps.
\newblock \emph{Annals of Statistics}, to appear, 2014.

\bibitem[Jacod et~al.(2009)Jacod, Li, Mykland, Podolskij, and
  Vetter]{jacod_microstructure_2009}
Jacod  J, Li  Y, Mykland  P A, Podolskij  M, and Vetter  M.
\newblock Microstructure noise in the continuous case: the pre-averaging
  approach.
\newblock \emph{Stochastic Process. Appl.}, 119\penalty0 (7):\penalty0
  2249--2276, 2009.

\bibitem[Jing et~al.(2011)Jing, Kong, and Liu]{jing_estimating_2011}
Jing  B, Kong  X, and Liu  Z.
\newblock Estimating the jump activity index under noisy observations using
  high-frequency data.
\newblock \emph{J. Amer. Statist. Assoc.}, 106\penalty0 (494):\penalty0
  558--568, 2011.

\bibitem[Kristensen(2010)]{kristensen_nonparametric_2010}
Kristensen  D.
\newblock Nonparametric filtering of the realized spot volatility: a
  kernel-based approach.
\newblock \emph{Econometric Theory}, 26\penalty0 (1):\penalty0 60--93, 2010.

\bibitem[Lepski et~al.(1997)Lepski, Mammen, and Spokoiny]{lepski_optimal_1997}
Lepski  O V, Mammen  E, and Spokoiny  V G.
\newblock Optimal spatial adaptation to inhomogeneous smoothness: an approach
  based on kernel estimates with variable bandwidth selectors.
\newblock \emph{The Annals of Statistics}, 25\penalty0 (3):\penalty0 929–947,
  1997.

\bibitem[Luschgy and Pag{\`e}s(2008)]{luschgy_moment_2008}
Luschgy  H and Pag{\`e}s  G.
\newblock Moment estimates for {L}\'evy processes.
\newblock \emph{Electron. Commun. Probab.}, 13:\penalty0 422--434, 2008.

\bibitem[Mancini(2001)]{mancini_disentangling_2001}
Mancini  C.
\newblock Disentangling the jumps of the diffusion in a geometric jumping
  {B}rownian motion.
\newblock \emph{Giornale dell'Istituto Italiano degli Attuari}, 64:\penalty0
  19--47, 2001.

\bibitem[Mancini(2009)]{mancini_non_2006}
Mancini  C.
\newblock Non-parametric threshold estimation for models with stochastic
  diffusion coefficient and jumps.
\newblock \emph{Scand. J. Stat.}, 36\penalty0 (2):\penalty0 270--296, 2009.

\bibitem[Mancini et~al.(2014)Mancini, Mattiussi, and Reno]{mancini_spot_2012}
Mancini  C, Mattiussi  V, and Reno  R.
\newblock Spot volatility estimation using delta sequences.
\newblock \emph{Finance Stoch.}, to appear, 2014.

\bibitem[Munk and Schmidt{-Hieber}(2010{\natexlab{a}})]{munk_lower_2010}
Munk  A and Schmidt{-Hieber}  J.
\newblock Lower bounds for volatility estimation in microstructure noise
  models.
\newblock In \emph{Borrowing strength: theory powering applications---a
  {F}estschrift for {L}awrence {D}. {B}rown}, volume~6 of \emph{Inst. Math.
  Stat. Collect.}, pages 43--55. Inst. Math. Statist., Beachwood, OH,
  2010{\natexlab{a}}.

\bibitem[Munk and
  Schmidt{-Hieber}(2010{\natexlab{b}})]{munk_nonparametric_2010}
Munk  A and Schmidt{-Hieber}  J.
\newblock Nonparametric estimation of the volatility function in a
  high-frequency model corrupted by noise.
\newblock \emph{Electron. J. Stat.}, 4:\penalty0 781--821, 2010{\natexlab{b}}.

\bibitem[Mykland and Zhang(2005)]{mykland_comment:_2005}
Mykland  P A and Zhang  L.
\newblock Comment: A selective overview of nonparametric methods in financial
  econometrics.
\newblock \emph{Statist. Sci.}, 20\penalty0 (4):\penalty0 347--350, 2005.

\bibitem[Podolskij and Vetter(2009{\natexlab{a}})]{podolskij_bipower-type_2009}
Podolskij  M and Vetter  M.
\newblock Bipower-type estimation in a noisy diffusion setting.
\newblock \emph{Stochastic Process. Appl.}, 119\penalty0 (9):\penalty0
  2803--2831, 2009{\natexlab{a}}.

\bibitem[Podolskij and Vetter(2009{\natexlab{b}})]{podolskij_estimation_2009}
Podolskij  M and Vetter  M.
\newblock Estimation of volatility functionals in the simultaneous presence of
  microstructure noise and jumps.
\newblock \emph{Bernoulli}, 15\penalty0 (3):\penalty0 634--658,
  2009{\natexlab{b}}.

\bibitem[Rei{\ss}(2011)]{reis_asymptotic_2011}
Rei{\ss}  M.
\newblock Asymptotic equivalence for inference on the volatility from noisy
  observations.
\newblock \emph{Ann. Statist.}, 39\penalty0 (2):\penalty0 772--802, 2011.

\bibitem[Rosenbaum and Tankov(2011)]{rosenbaum_asymptotic_2010}
Rosenbaum  M and Tankov  P.
\newblock Asymptotic results for time-changed {L}{\'e}vy processes sampled at
  hitting times.
\newblock \emph{Stochastic Process. Appl.}, 121\penalty0 (7):\penalty0
  1607--1632, 2011.

\bibitem[Todorov and Tauchen(2012)]{todorov_realized_2012}
Todorov  V and Tauchen  G.
\newblock The realized {L}aplace transform of volatility.
\newblock \emph{Econometrica}, 80\penalty0 (3):\penalty0 1105--1127, 2012.

\bibitem[Tsybakov(2009)]{tsybakov_introduction_2009}
Tsybakov  A B.
\newblock \emph{Introduction to nonparametric estimation}.
\newblock Springer Series in Statistics. Springer, New York, 2009.

\bibitem[Vetter(2014)]{vetter_inference_2013}
Vetter  M.
\newblock Inference on the {L}{\'e}vy measure in case of noisy observations.
\newblock \emph{Statistics {\&} Probability Letters}, 87:\penalty0 125--133,
  2014.

\bibitem[Winkel(2001)]{winkel_recovery_2001}
Winkel  M.
\newblock The recovery problem for time-changed {L}{\'e}vy processes.
\newblock \emph{Preprint}, 2001.

\bibitem[Woerner(2007)]{woerner_inference_2007}
Woerner  J H C.
\newblock Inference in {L}\'evy-type stochastic volatility models.
\newblock \emph{Adv. in Appl. Probab.}, 39\penalty0 (2):\penalty0 531--549,
  2007.

\bibitem[Zhang(2006)]{zhang_efficient_2006}
Zhang  L.
\newblock Efficient estimation of stochastic volatility using noisy
  observations: a multi-scale approach.
\newblock \emph{Bernoulli}, 12\penalty0 (6):\penalty0 1019--1043, 2006.

\bibitem[Zhang et~al.(2005)Zhang, Mykland, and
  A{\"{\i}}t{-Sahalia}]{zhang_tale_2005}
Zhang  L, Mykland  P A, and A{\"{\i}}t{-Sahalia}  Y.
\newblock A tale of two time scales: determining integrated volatility with
  noisy high-frequency data.
\newblock \emph{J. Amer. Statist. Assoc.}, 100\penalty0 (472):\penalty0
  1394--1411, 2005.

\end{thebibliography}

\end{document}